\newtheorem{theorem}{Theorem}
\newtheorem{lemma}{Lemma}
\newtheorem{remark}{Remark}
\newtheorem{proposition}{Proposition}
\newtheorem{definition}{Definition}
\newcommand \bmu {\boldsymbol{\mathrm{\mu}}}
\newcommand \bW {\mathbf{W}}
\newcommand \p {\partial}
\newcommand \R {\mathbb{R}}
\renewcommand \L {\mathrm{L}}
\newcommand \W {\mathrm{W}}
\newcommand \LL {\mathrm{L}}
\newcommand \WW {\mathrm{W}}
\newcommand \BB {\mathrm{B}}
\newcommand \I {\mathrm{I}}
\newcommand \Id {\mathrm{Id}}
\renewcommand \d {\mathrm{d}}
\gdef\SetFigFont#1#2#3#4#5{%
  \reset@font\fontsize{#1}{#2pt}%
  \fontfamily{#3}\fontseries{#4}\fontshape{#5}%
  \selectfont}%
\title{Design of the monodomain model by artificial neural networks\thanks{This work is supported by...}}
\author[1, 2]{S\'ebastien Court}
\affil[1]{\begin{small}Department of Mathematics, University of Innsbruck, Technikerstr. 13, A-6020 Innsbruck, Austria.\end{small}}
\affil[2]{\begin{small}Digital Science Center, University of Innsbruck, Innrain 15, A-6020 Innsbruck, Austria. Email: {\tt sebastien.court@uibk.ac.at}\end{small}}
\author[3, 4]{Karl Kunisch}
\affil[3]{\begin{small}Institute for Mathematics and Scientific Computing, University of Graz, Heinrichstr. 36, A-8010 Graz, Austria.\end{small}}
\affil[4]{\begin{small}Radon Institute, Austrian Academy of Sciences, Altenbergstr. 69, A-4040 Linz, Austria. Email: {\tt karl.kunisch@uni-graz.at}.\end{small}}
\begin{document}

\maketitle

\begin{abstract}
We propose an optimal control approach in order to identify the nonlinearity in the monodomain model, from given data. This data-driven approach gives an answer to the problem of selecting the model when studying phenomena related to cardiac electrophysiology. Instead of determining coefficients of a prescribed model (like the FitzHugh-Nagumo model for instance) from empirical observations, we design the model itself, in the form of an artificial neural network. The relevance of this approach relies on the \textcolor{black}{approximation capacities} of neural networks. We formulate this inverse problem as an optimal control problem, and provide mathematical analysis and derivation of optimality conditions. One of the difficulties comes from the lack of smoothness of activation functions which are classically used for training neural networks. Numerical simulations demonstrate the feasibility of the strategy proposed in this work.
\end{abstract}

\noindent{\bf Keywords:} Optimal control problem, Model approximation, Data-driven approach, Artificial neural networks, Monodomain model, Semilinear partial differential equations, $L^p$-maximal regularity, Optimality conditions, Non-smooth optimization.\\
\hfill \\
\noindent{\bf AMS subject classifications (2020):} 49K20, 35A01, 35D30, 35K40, 35K45, 35K58, 35B30, 35M99, 41A46, 41A99, 68T07, 49J45, 49N15.

\tableofcontents

\section{Introduction}
The present paper addresses the question of determining the nonlinear terms for the monodomain model, with a data-driven approach. We could identify the parameters for a given class of models, like for the \textcolor{black}{FitzHugh-Nagumo model~\cite{FITZHUGH1961}, for which the coefficients need to be estimated~\cite{Yanqiu2012, Doruk2019}. In general this is achieved by linear regression, by reducing the difference between the outcomes of the model and the empirical observations}. \textcolor{black}{Recently, the {\it SINDy} algorithm~\cite{Brunton1, Brunton2, Brunton3, Messenger2021} proposed a direct and efficient approach for identifying nonlinear dynamics with linear regression techniques, from data measurements. The approach of SINDy relies on a linear combination of nonlinear terms stored in a dictionary. While the underlying nonlinear terms of an equation are certainly and precisely (re-)discovered with SINDy, as far as they are represented in the dictionary, it is not yet clear how such an approach could help when trying to identify nonlinear terms that are not represented in the considered dictionary. Further, modeling terms of equations with the traditional tools and intuition of a physicist can entail some limits. In this case neural network techniques can provide a viable alternative.}

\textcolor{black}{In order to tackle the nonlinearities, the modern Koopman theory~\cite{Koopman} proposes to learn change of coordinates that reduces a nonlinear phenomena to a linear differential equation, for which standard linear algebra tools enable us to determine the dynamics. Instead of pursuing the goal of identifying nonlinear dynamics by deploying linear algebra related tools, we propose in the present paper to change and enrich the approximation means by adopting purely nonlinear tools, and designing the nonlinear terms of a differential equation in the form of {\it neural networks}.}

For justifying this approach, we rely on the approximation capabilities of neural networks. First, the universal approximation theorem~\cite{Cybenko89} shows the possibility of approximating any continuous function on a finite-dimensional compact set by a neural network with a single hidden layer. Next these approximation capabilities were extensively developed, like in~\cite{Hornik92, Pinkus93}. More recently, Grohs \& al. \cite{Grohs2019} gave bounds on the width and depth of a neural network approximating a polynomial function with a given precision. Note that strong nonlinearities -- of exponential type for instance -- can be satisfactorily approximated by polynomial functions. The specific interest of neural networks, compared with standard mode decomposition, lies in the successive composition action of the input with {\it activation functions}, leading to exponential convergence (see~\cite[Proposition~III.3]{Grohs2019}) when raising the number of successive compositions -- namely the number of layers. The price to pay lies in the determination of coefficients, the so-called {\it weights} of the neural network, that appear \textcolor{black}{unavoidably nonlinear} through the successive compositions.


A data-driven approach for designing the nonlinear dynamics of the monodomain model can be formulated as follows:
\begin{eqnarray*}
\left\{ \begin{array} {l}
\displaystyle \min_{\mathbf{W}} \sum_{k=1}^K  \left\| \mathrm{z} - \mathrm{z}_{\mathrm{data},k} \right\|^2 \\[10pt]
\text{\textcolor{black}{where $\mathbf{W}$ are affine maps that parameterize the nonlinear terms of~\eqref{sysmain} below.}}
\end{array}\right.
\end{eqnarray*}
The variable $\mathrm{z} = (v,w)$ denotes the unknown of the state system~\eqref{sysmain} with $(v_0,w_0) = \mathrm{z}^{(0)}_{\mathrm{data},k} \in \R^2$ as initial conditions, and $\mathrm{z}_{\mathrm{data},k}$ represent measurements of the state. The model is given by
\begin{linenomath}
\begin{subequations} \label{sysmain}
\begin{eqnarray}
\displaystyle \frac{\p v}{\p t} - \nu\Delta v + \phi_v(v,w) = f_v & &
\text{in } \Omega\times (0,T), \label{sysmain-1} \\
\displaystyle \frac{\p w}{\p t} + \delta w + \phi_w(v,w) = f_w & &
\text{in } \Omega\times (0,T),\\
\displaystyle \frac{\p v}{\p n} = 0 & & \text{on } \p \Omega \times (0,T),\\
(v,w)(\, \cdot \, ,0) = (v_0,w_0) & & \text{in } \Omega.
\end{eqnarray}
\end{subequations}
\end{linenomath}
The domain $\Omega \subset \R^d$ is assumed to be bounded and smooth. The constant parameters $\delta >0$ and $\nu>0$ are given. The nonlinear functions $(v,w) \mapsto \phi_v(v,w)$ and $(v,w) \mapsto \phi_w(v,w)$ are components of a function $\Phi = (\phi_v,\phi_w)$, chosen in the form of a {\it neural network}, namely a function whose expression is given by
\begin{equation}
\Phi(\mathrm{z})  =  \text{\textcolor{black}{$
\left( \begin{matrix} \displaystyle
\phi_v(\mathrm{z}) \\ \phi_w(\mathrm{z})
\end{matrix} \right) = $
}}
\left\{ \begin{array} {ll}
W_2(\rho (W_{1}(\mathrm{z})))
& \text{if } L=2 , \\[5pt]
W_L(\rho (W_{L-1}(\rho(\dots \rho(W_1(\mathrm{z})))))) & \text{if } L \geq 3,
\end{array} \right. \label{sysmain2}
\end{equation}
for $\mathrm{z} = (v,w) \in \R^2$. The parameters $W_\ell$ correspond to the so-called {\it weights} of the neural networks. \textcolor{black}{These are affine functions between Euclidean spaces,} and are the control functions to be optimized. The activation function denoted by $\rho$ will be assumed to be globally Lipschitz. \textcolor{black}{For example, ReLU functions and their smooth approximations such as GELU, SiLU or Softplus would correspond to our assumption. Other smooth examples such as the Logistic activation function, hyperbolic tangent, ELU, or non-smooth examples such as Leaky ReLU, PreLU, would also fit our framework. We refer to~\cite[section~6.3.1]{Goodfellow2016} for further comments.} The results obtained in this paper are maybe also true for activation functions which are only locally Lipschitz, but we prefer to sacrifice accuracy for simplicity.

Following~\cite[Proposition~III.3]{Grohs2019}, we note that by means of artificial neural networks we can approximate the FitzHugh-Nagumo model, which is a polynomial representation of the dynamics in the following form:
\begin{eqnarray*}
 \displaystyle \frac{\p v}{\p t} - \nu\Delta v + av^3+bv^2+cv +dw = f_v & &
 \text{in } \Omega\times (0,T),\\
 \displaystyle \frac{\p w}{\p t} + \eta w + \gamma v = f_w & &
 \text{in } \Omega\times (0,T),\\
\displaystyle \frac{\p v}{\p n} = 0 & & \text{on } \p \Omega \times (0,T),\\
 (v,w)(\, \cdot \, ,0) = (v_0,w_0) & & \text{in } \Omega.
\end{eqnarray*}
As mentioned in the original article~\cite{FITZHUGH1961}, this is the simplest representation which models, in terms of equations, experimental observations of behaviors like the presence of three equilibrium states, and the harmonic oscillator nature of the dynamics. The coefficients $a$, $b$, $c$, $d$, $\eta$ and $\gamma$ have to be identified empirically, in order to minimize the bias between the output of the model and the experimental data. This approximation corresponds to a basic mode decomposition of the nonlinear underlying dynamics, and in spite of its apparent simplicity, the investigation of related theoretical questions as well as numerical simulations of such a model is already a non-trivial task. We propose here to to change the nature of the approximation, and this leads us to other difficulties. In this fashion, the recent contribution~\cite{Fresca-1}, using Deep Learning techniques for modeling the monodomain equations with a data-driven approach in the context of cardiac electrophysiology, proposed a similar approach with focus on numerical realization.

One could prove wellposedness for system~\eqref{sysmain} in the framework of strong solutions, as it is done for the FitzHugh-Nagumo model in~\cite{Breiten1, Breiten2} in the Hilbert space case, or in~\cite{Hieber2018, Hieber2020-1, Hieber2020-2} in the general context of strong $\L^p$-maximal regularity. Given the way we consider the nonlinearity, namely designed with artificial neural networks, sticking to the functional setting of these previous works would lead to require invariance of functional spaces under the action of the activation functions of the neural network, and this would lead us to assume that they are at least of class $C^1$ (see~\cite[Lemma~A.2.]{BB74}), which is too restrictive in view of the performances of ReLU functions for training neural networks (see~\cite[p.~438]{LeCun2015}). The question of weak solutions was addressed in the Hilbert space case in~\cite{Wagner1, Wagner2}, for a given class of nonlinearities. In the present paper we will consider solutions which have the weak maximal parabolic regularity, namely
\begin{eqnarray*}
v \in \L^s(0,T; \WW^{1,p}(\Omega)) \cap \W^{1,s}(0,T; \WW^{-1,p}(\Omega)).
\end{eqnarray*}
for $p<d$ and $s>1$. In the linear case, existence results in this fashion were obtained in~\cite{Rehberg1}. They rely on resolvent estimates for parabolic operators in $\WW^{-1,p}(\Omega)$, obtained in~\cite{Groeger1, Groeger2, Groeger3}.

For studying the optimal control problem, the non-smoothness of the activation functions leads us to consider regularization techniques recently used in similar contexts (see~\cite{Meyer1, Meyer2}), namely non-smooth parabolic equations. In~\cite{Meyer1, Meyer2}, the emphasis is on the properties and consequences of the regularization. In our contribution, the non-smoothness of the activation functions does not allow us to define an adjoint system -- in the classical sense -- for the non-regularized system, and we prefer to focus on the specific structure of the optimal control problem, namely a control operator designed as a neural network, involving weights as control functions in the successive compositions by activation functions.

The plan is organized as follows: The functional framework for the state variables and the formulas concerning neural networks are given in section~\ref{sec-func}. Wellposedness questions are addressed in section~\ref{sec-para}, in the context of parabolic maximal regularity. An optimal control problem is introduced in section~\ref{sec-opt} for identifying the nonlinearities of the monodomain model, for which we prove existence of minimizers. Next, optimality conditions are derived in section~\ref{sec-reg} in the case of smooth activation functions. Section~\ref{sec-passing} is devoted to the transposition of the optimality conditions in the case of non-smooth activation functions. In section~\ref{sec-num} we propose numerical tests showing feasibility and relevance of our approach.
Conclusions and comments are given in section~\ref{sec-conc}.

\section{Notation and preliminaries} \label{sec-func}

Let be $T>0$ and $\Omega$ a smooth bounded domain of $\R^d$, with $d = 2$ or $3$. For any index $\kappa >1$, we will denote by $\kappa'$ its {\it dual exponent}, such that $1/{\kappa'} + 1/\kappa = 1$, and keep in mind H\"{o}lder's inequality, which will be used throughout the paper.

\subsection{Negative Besov and Sobolev spaces}

The Sobolev spaces of type $\WW^{s,p}(\Omega)$ incorporating the homogeneous Dirichlet condition are defined as in~\cite[section~3]{Amann2004}, namely
\begin{linenomath}\begin{equation*}
\tilde{\WW}^{s,p}(\Omega) := \left\{ \begin{array} {ccl}
\left\{ u \in \WW^{s,p}(\Omega)\mid \gamma u = 0 \right\} & & \text{if } 1/p < s \leq 2, \\
\WW^{s,p}(\Omega) & & \text{if } 0 \leq s < 1/p, \\
\left(\tilde{\WW}^{-s,p'}(\Omega)\right)' & & \text{if } -2 \leq s < 0, \ s \notin \mathbb{Z} + 1/p,
\end{array}\right.
\end{equation*}\end{linenomath}
where $\gamma$ denotes the trace operator on $\p \Omega$. For the sake of simplicity, in the rest we will denote $\WW^{s,p}(\Omega) = \tilde{\WW}^{s,p}(\Omega)$. Next, when $s>0$, we denote the Besov spaces, defined by real interpolation as follows:
\begin{linenomath}\begin{equation}
\BB^{s}_{p,q}(\Omega) = \left(\LL^p(\Omega) ; \WW^{m(s),p}(\Omega) \right)_{s/m(s), q}, \label{Besov-inter}
\end{equation}\end{linenomath}
where $m(s) := \lceil s \rceil$ is the smallest integer non-smaller than $s$. When $s<0$, we define $\BB^{s}_{p,q}(\Omega) = \left(\BB^{-s}_{p',q'}(\Omega) \right)'$, which is coherent with the following identity given in~\cite[Theorem~3.7.1, p.~54]{Bergh}:
\begin{linenomath}\begin{equation*}
\left( \BB^{-s}_{p',q'}(\Omega) \right)' = \left( (\LL^{p'}(\Omega); \WW^{m(-s),p'})_{-s/m(-s),q'} \right)'
= (\LL^{p}(\Omega); \WW^{-m(-s), p}(\Omega))_{-s/m(-s),q}.
\end{equation*}\end{linenomath}
Thus, when $s<0$, we can set $m(s) = -m(-s)$, and then  equality~\eqref{Besov-inter} holds for $s<0$ too.

\subsection{Functional setting}
For $s>1$ and $p>1$, we set
\begin{linenomath}\begin{equation*}
X_{s,p}^T :=  \L^s(0,T; \WW^{1,p}_{\#}(\Omega)) \cap \W^{1,s}(0,T; \WW^{-1,p}(\Omega)),
\text{ where } \WW^{1,p}_{\#}(\Omega):= \left\{v \in \WW^{1,p}(\Omega) \mid \int_{\Omega}v \, \d \Omega = 0 \right\}.
\end{equation*}\end{linenomath}
Observe that the interpolation space $\left(\WW^{-1,p}(\Omega), \WW^{1,p}(\Omega)\right)_{1-1/s,s}$ coincides with the Besov space $\BB^{1-2/s}_{p,s}(\Omega)$ (see~\cite{Triebel}). So the following continuous embedding holds (see~\cite{Amann-book}, Theorem~III.4.10.2 page~180):
\begin{linenomath}\begin{equation}
X_{s,p}^T  \hookrightarrow
C\left([0,T];  \BB^{1-2/s}_{p,s}(\Omega)\right).
\label{super-embedding}
\end{equation}\end{linenomath}
We will assume throughout the paper that
\begin{linenomath}\begin{equation} \label{ass-ps}\tag{C1}
1<s, \quad 1< p < d.
\end{equation}\end{linenomath}
Let $r$ denote any index such that
\begin{linenomath}\begin{equation} \label{ass-r}\tag{C2}
(dp)/(d+p) < r \leq p.
\end{equation}\end{linenomath}
Assumption~\eqref{ass-r} implies that
\begin{linenomath}\begin{equation} \label{ass-r2}\tag{C2'}
(dp)/(d+p) < r < (dp)/(d-p).
\end{equation}\end{linenomath}
In this case, from the Rellich-Kondrachov theorem, the continuous embeddings $\WW^{1,p}(\Omega) \hookrightarrow \LL^r(\Omega) \hookrightarrow \WW^{-1,p}(\Omega)$ hold and are compact. We will denote by
\begin{linenomath}\begin{equation*}
Y_{s,r}^T :=  \W^{1,s}(0,T; \LL^r(\Omega))
\end{equation*}\end{linenomath}
the space endowed with the norm given by
\begin{linenomath}\begin{equation*}
\text{\textcolor{black}{$\| w\|_{Y_{s,r}^T}$}} :=
\|w\|_{\L^\infty(0,T;\LL^r(\Omega))} + \|w\|_{\W^{1,s}(0,T;\LL^r(\Omega))}.
\end{equation*}\end{linenomath}
Let us define what we call a {\it weak solution} of system~\eqref{sysmain}-\eqref{sysmain2}.

\begin{definition} \label{def-weak}
We say that $(v,w) \in X_{s,p}^T \times Y_{s,r}^T$ is a weak solution of system~\eqref{sysmain}-\eqref{sysmain2} if for all $(\varphi_v,\varphi_w) \in \L^{s'}(0,T;\WW^{1,p'}(\Omega)) \times \L^{s'}(0,T;\LL^{r'}(\Omega))$ we have
\begin{eqnarray*}
& & \int_0^T\left(\left\langle \frac{\p v}{\p t}-f_v \, ; \, \varphi_v \right\rangle_{\WW^{-1,p}(\Omega);\WW^{1,p'}(\Omega)}
+ \nu \left\langle \nabla v \, ; \,  \nabla \varphi_v\right\rangle_{\LL^{p}(\Omega);\LL^{p'}(\Omega)}
+ \left\langle \phi_v(v,w) \, ; \, \varphi_v \right\rangle_{\LL^{r}(\Omega);\LL^{r'}(\Omega)}
\right) \d t = 0, \\
& & \int_0^T  \left\langle \frac{\p w}{\p t} + \delta w + \phi_w(v,w) - f_w \, ; \,  \varphi_w \right\rangle_{\LL^{r}(\Omega);\LL^{r'}(\Omega)} \d t  = 0, \\
& & v(x,0) = v_0(x), \quad w(x,0) = w_0(x), \quad \text{for a.e. } x\in \Omega.
\end{eqnarray*}
\end{definition}
In the definition above, the following regularity properties are supposed to hold:
\begin{eqnarray*}
\phi_v(v,w),  \, \phi_w(v,w)  \in  \L^s(0,T;\LL^r(\Omega)),
& & \forall (v,w) \in X^T_{s,p} \times Y^T_{s,r}.
\end{eqnarray*}
For $\phi_v$ and $\phi_w$ given as in~\eqref{sysmain2}, this regularity is proved in Lemma~\ref{lemma-nem}.

\subsection{Properties of neural networks} \label{subsec-NN}
For an activation function $\rho$, assumed to be globally Lipschitz, we consider a neural network with $L$ layers:
\begin{eqnarray*}
\Phi_L(\mathrm{z}, W_1,\dots, W_L) & = &
\left\{ \begin{array} {ll}
W_2(\rho (W_{1}(\mathrm{z})))
& \text{if } L=2 , \\[5pt]
W_L(\rho (W_{L-1}(\rho(\dots \rho(W_1(\mathrm{z})))))) & \text{if } L \geq 3.
\end{array} \right.
\end{eqnarray*}
The weights $(W_\ell)$ are affine functions from $\R^{n_{\ell-1}}$ to $\R^{n_{\ell}}$, of the form $W_\ell (\mathrm{z})= A_\ell\mathrm{z} + b_\ell$ with $A_\ell \in \R^{n_{\ell}\times n_{\ell-1}}$ and $b_\ell \in \R^{n_{\ell}}$. The resulting affine spaces are denoted by $\mathrm{Aff}(n_{\ell-1}, n_{\ell},\R)$. Naturally we impose $n_0 = n_L = 2$. We denote by
\begin{linenomath}\begin{equation*}
\mathbf{W} \in \mathcal{W} :=
\prod_{\ell = 1}^{L}\mathrm{Aff}(n_{\ell-1}, n_{\ell},\R)
\end{equation*}\end{linenomath}
the family $(W_\ell)_{1\leq \ell \leq L}$. The space $\mathcal{W}$ is endowed with the Euclidean norm given by
\begin{eqnarray*}
\| \mathbf{W} \|^2_{\mathcal{W}} := \sum_{\ell=1}^L
\left(|A_{\ell}|^2_{\R^{n_{\ell}\times n_{\ell-1}}} + |b_\ell|^2_{\R^{n_{\ell}}} \right)
=  \sum_{\ell=1}^L
\left(\mathrm{trace}(A_{\ell}^TA_{\ell}) + |b_\ell|^2_{\R^{n_{\ell}}} \right) .
\end{eqnarray*}
Since the functions $W_\ell$ have values in $\R^{n_{\ell}}$, the compositions of type $\rho(W_\ell(\mathrm{z}))$, for $\mathrm{z} \in \R^{n_{\ell-1}}$ have to be understood coordinate-wise as $\rho((W_\ell(\mathrm{z}))_i)$, for $i \in \{1,\dots, n_{\ell}\}$.\\

Observe that for all $1 \leq \ell \leq L-1$ the following formula holds:
\begin{eqnarray}
\Phi_{\ell+1}(\mathrm{z}, W_1,\dots, W_{\ell+1}) & = & W_{\ell+1} \circ\rho\big(\Phi_{\ell}(\mathrm{z}, W_1,\dots, W_{\ell})\big). \label{formulaPhi}
\end{eqnarray}
We assume that the activation function $\rho$ is globally Lipschitz on $\R$. Thus $\rho'$ exists in $\L^\infty(\R)$. 
From~\eqref{formulaPhi}, by induction we can verify that the derivative of $\Phi_L$ with respect to $\mathrm{z}$ satisfies the identity
\begin{linenomath}\begin{equation}
\nabla_{\mathrm{z}}\Phi_L(\mathrm{z}, W_1,\dots, W_L)  =  A_L\mathrlap{\prod_{\ell = 1}^{L-1}}{\hspace*{-0.2pt}\longrightarrow} \rho'\big(\Phi_\ell(\mathrm{z},W_1,\dots, W_{\ell})\big) A_{\ell} . \label{grad_nn}
\end{equation}\end{linenomath}
The symbol $\displaystyle \mathrlap{\prod_{\ell = 1}^{L-1}}{\hspace*{-0.2pt}\longrightarrow}$ indicates that the product is calculated iteratively by multiplying to the left the different matrices, when the index $\ell$ increases: $\displaystyle \mathrlap{\prod_{\ell = 1}^{L}}{\hspace*{-1pt}\longrightarrow} B_{\ell} = B_L B_{L-1}\dots B_2 B_1$.
Here again the terms of type $\rho'\big(\Phi_\ell \big)$ are vectors of $\R^{n_{\ell}}$ that have to be understood coordinate-wise. The vector/matrix product $\rho'\big(\Phi_\ell(\mathrm{z},W_1,\dots, W_{\ell})\big) \, A_{\ell}$ is a matrix, and has to be understood as $(\rho' \, A)_{ij}  =  \rho'_i \, A_{ij}$. Then, from the mean-value theorem, we get the following Lipschitz estimate:
\begin{eqnarray}
|\Phi_L(\mathrm{z}_1,\bW) - \Phi_L(\mathrm{z}_2,\bW)|_{\R^2} & \leq &
\left(\mathrlap{\prod_{\ell = 1}^{L}}{\hspace*{-1pt}\longrightarrow} |A_\ell |_{\R^{n_{\ell}\times n_{\ell-1}}} \right)
\|\rho'\|_{\infty}^{\mathcal{L}_{\mathcal{W}}} | \mathrm{z}_1-\mathrm{z}_2 |_{\R^2}, \label{resLip}
\end{eqnarray}
where we introduced $\mathcal{L}_{\mathcal{W}} := \displaystyle \sum_{\ell = 1}^{L-1} n_{\ell}$. The derivative of $\Phi_L$ with respect to the weights $W_\ell$ is given by the following formula, for $1 \leq \ell \leq L-1$, for all $\tilde{W} : \mathrm{z} \mapsto \tilde{A}\mathrm{z}+\tilde{b}$:
\begin{eqnarray}
\begin{array} {rcl}
\displaystyle \frac{\p \Phi_L}{\p W_\ell}(\mathrm{z}, \bW).\tilde{W} & = &
\displaystyle \left(\mathrlap{\prod_{k=\ell}^{L-1}}{\hspace*{-0.2pt}\longrightarrow} A_{k+1}\rho'\big(\Phi_k(\mathrm{z}, W_1,\dots, W_k)\big)\right) \left( \tilde{W} \circ  \rho\big(\Phi_{\ell-1}(\mathrm{z}, W_1, \dots, W_{\ell-1})\big)\right), \\
\displaystyle\frac{\p \Phi_L}{\p W_L}(\mathrm{z}, \bW).\tilde{W} & = &
\displaystyle \left( \tilde{W} \circ  \rho\big(\Phi_{L-1}(\mathrm{z}, W_1, \dots, W_{L-1})\big)\right).
\end{array} \label{formula_phiW}
\end{eqnarray}
Here the products have to be understood as follows:
\begin{eqnarray*}
\left(\frac{\p \Phi_L}{\p W_\ell}(\mathrm{z},\mathbf{W}).\tilde{W}\right)_i & = &
 \left(\mathrlap{\prod_{k=\ell}^{L-1}}{\hspace*{-0.2pt}\longrightarrow} \left(A_{k+1}\right)_{ij}\rho'\big((\Phi_k(\mathrm{z},W_1,\dots,W_k))_j\big)\right) \left( \tilde{W} \rho\big(\Phi_{\ell-1}(\mathrm{z},W_1,\dots, W_{\ell-1})\big)\right)_j.
\end{eqnarray*}
Throughout the paper we will omit the index $L$, and will denote by $\displaystyle \nabla_{\bW} \Phi$ the vector whose components are the functions $\displaystyle \frac{\p \Phi}{\p W_{\ell}}$. Note that the latter are made of two main components, namely the sensitivity with respect to~$A_{\ell}$, and the sensitivity with respect to~$b_{\ell}$.

\section{Parabolic maximal regularity} \label{sec-para}

\subsection{Weak maximal parabolic regularity for the Laplace operator} \label{sec-maxparaDisser}
As in~\cite[Definition~5.6]{Rehberg2}, we define $\mathcal{A}_p: \WW^{1,p}_{\#}(\Omega) \rightarrow \WW^{-1,p}(\Omega)$ by
\begin{eqnarray*}
\langle \mathcal{A}_p v \, ;\varphi \rangle_{\WW^{-1,p}(\Omega); \WW^{1,p'}(\Omega)} := \nu \int_{\Omega} \nabla v \cdot \nabla \varphi \, \d \Omega.
\end{eqnarray*}
We first recall existing results providing conditions under which the Laplace operator owns the parabolic maximal regularity in the space~$X^T_{s,p}$. From~\cite[Example~6.6]{Rehberg2}, we recall that $\mathcal{A}_p + \I: \WW^{1,p}_{\#} \rightarrow \WW^{-1,p}$ is an isomorphism for all . Further, from the Poincar\'e-Wirtinger inequality, there exists $C>0$ such that
\begin{linenomath}\begin{equation*}
\|(\mathcal{A}_p + \I)v\|_{\WW^{-1,p}(\Omega)} = \| v \|_{\WW^{1,p}(\Omega)} \leq
C \| \nabla v \|_{\LL^p(\Omega)}
= C\|\mathcal{A}_pv \|_{\WW^{-1,p}},
\quad \forall v\in \WW^{1,p}_{\#}(\Omega).
\end{equation*}\end{linenomath}
Thus $\mathcal{A}_p : \WW^{1,p}_{\#}(\Omega) \rightarrow \WW^{-1,p}(\Omega)$ is also an isomorphism for all  $p\in(1,\infty)$. Following~\cite[Corollary~6.12]{Rehberg2} \textcolor{black}{that addresses a more general context}, we \textcolor{black}{know} that there exists $\delta \in (0,1)$ such that $\mathcal{A}_p : \WW^{1,p}_{\#}(\Omega) \rightarrow \WW^{-1,p}(\Omega)$ satisfies maximal parabolic regularity for all $p\in (2-\delta, \infty)$ (see also~\cite{Groeger1} as a first contribution).
We will then consider exponents $p$ such that the following additional assumption is fulfilled:
\begin{linenomath}\begin{equation} \label{ass-max}\tag{C3}
\text{The operator $\mathcal{A}_p$ satisfies the maximal parabolic $\L^s(0,T;\WW^{-1,p}(\Omega))$-regularity.}
\end{equation}\end{linenomath}
The notion of \textcolor{black}{maximal} parabolic regularity is understood in the sense of~\cite[Definition~2.2]{Rehberg2}, \textcolor{black}{namely that for any $f \in \L^s(0,T;\W^{-1,p}(\Omega))$ there exists a unique $v \in X_{s,p}^T$ satisfying $\dot{v} + \mathcal{A}_p v = f$ for almost all $t \in(0,T)$, with the initial condition $v(0) = 0$}. Recall that this property \textcolor{black}{is independent of $T \in (0,\infty)$, and also of the exponent $s \in (1,\infty)$. It  is also transferable to the case where $v(0) = v_0 \neq 0$, as far as $v_0$ lies in the trace space of $X_{s,p}^T$, namely~$\BB^{1-2/s}_{p,s}(\Omega)$. We refer to~\cite{Hieber2003} for further aspects of the parabolic theory.}

As far as we know, existence of weak solutions for the \textcolor{black}{coupled} system~\eqref{sysmain} in the general context of parabolic maximal regularity has not yet been investigated. However, the semilinear equation~\eqref{sysmain-1} has been treated in~\cite{Amann2004} in this functional framework. We refer to section~\ref{secthAmann} for more detailed comments. But the existing results do not apply {\it a priori} directly to the coupled system~\eqref{sysmain}, and so in this section we prove a local-in-time existence result, completed by sufficient conditions leading to global existence of solutions. The strategy for obtaining local-in-time existence is similar to~\cite{Amann2004}, namely the utilization of the Banach fixed-point theorem.

\subsection{The linear system}

Let be $T >0$. In this subsection we are interested in the following linear system
\begin{eqnarray}
\left\{ \begin{array} {ll}
\displaystyle \frac{\p v}{\p t} - \nu \Delta v = F_v &
\text{in } \Omega\times (0,T),\\[10pt]
\displaystyle \frac{\p w}{\p t} + \delta w  = F_w &
\text{in } \Omega\times (0,T),\\[10pt]
\displaystyle \frac{\p v}{\p n} = 0 & \text{on } \p \Omega \times (0,T),\\[10pt]
(v,w)(\cdot,0) = (v_0,w_0) & \text{in } \Omega.
\end{array} \right. \label{syslin}
\end{eqnarray}
We assume that the data satisfy
\begin{eqnarray*}
F_v\in \L^s(0,T;\WW^{-1,p}(\Omega)), \quad
F_w \in \L^s(0,T;\LL^r(\Omega)),
\quad
(v_0, w_0) \in \BB^{1-2/s}_{p,s}(\Omega) \times \LL^{r}(\Omega).
\end{eqnarray*}
From Assumption~\eqref{ass-max} made in section~\ref{sec-maxparaDisser}, the operator $\mathcal{A}_p$ owns the maximal parabolic regularity property over $\WW^{-1,p}(\Omega)$. Like in~\cite{Groeger1}, this can be achieved with the following resolvent estimate
\begin{eqnarray*}
\left\| (-\mathcal{A}_p +\lambda \I)^{-1}\right\|_{\mathscr{L}(\WW^{-1,p}(\Omega); \WW^{1,p}_{\#}(\Omega))}  \leq \frac{C}{|\lambda|}, & & \text{if } \mathrm{Re}\, \lambda \geq 0.
\end{eqnarray*}
From here we can derive the following result for system~\eqref{syslin}.

\begin{proposition} \label{prop-lin}
Let $T>0$. System~\eqref{syslin} admits a unique solution $(v,w) \in X_{s,p}^T \times Y_{s,r}^T$. It satisfies\footnote{Throughout the paper we will denote by $C$ a generic positive constant depending only on $\Omega$, $\nu$ and $\delta$, and which is in particular independent of $T$.}
\begin{eqnarray}
\|v\|_{X_{s,p}^T} + \|w\|_{Y_{s,r}^T} & \leq &
C\left(
\|v_0\|_{\BB^{1-2/s}_{p,s}(\Omega)} + \| w_0 \|_{\LL^{r}(\Omega)} +
\|F_v\|_{\L^s(0,T;\WW^{-1,p}(\Omega))} + \| F_w\|_{\L^s(0,T;\LL^r(\Omega))}
\right). \nonumber \\ \label{est-lin}
\end{eqnarray}
\end{proposition}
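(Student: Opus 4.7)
The decisive observation is that system~\eqref{syslin} is fully decoupled: the $v$-equation involves only $v$ (through the Laplacian), while the $w$-equation is a pointwise-in-$x$ linear ODE. I therefore treat the two components independently and sum the resulting estimates. Uniqueness will follow at once from linearity together with the a~priori bound.

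For the $v$-equation I invoke Assumption~\eqref{ass-max}: the operator $\mathcal{A}_p$ enjoys $\L^s$-maximal parabolic regularity on $\WW^{-1,p}(\Omega)$. The trace space at $t=0$ of $X_{s,p}^T$ is the real-interpolation space $\bigl(\WW^{-1,p}(\Omega),\WW^{1,p}(\Omega)\bigr)_{1-1/s,s} = \BB^{1-2/s}_{p,s}(\Omega)$, as recalled in~\eqref{super-embedding}. The abstract maximal-regularity theorem (in the form summarized above from Rehberg) then yields a unique $v \in X_{s,p}^T$ with
\begin{equation*}
\|v\|_{X_{s,p}^T} \leq C\bigl(\|v_0\|_{\BB^{1-2/s}_{p,s}(\Omega)} + \|F_v\|_{\L^s(0,T;\WW^{-1,p}(\Omega))}\bigr).
\end{equation*}
The $T$-independence of $C$ is obtained by formulating maximal regularity on the half-line $(0,\infty)$ (extending data by zero outside $(0,T)$) and using the Poincar\'e--Wirtinger inequality on $\WW^{1,p}_{\#}(\Omega)$ to make the equivalence $\|v\|_{\WW^{1,p}} \simeq \|\nabla v\|_{\LL^p}$ a uniform constant.

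For the $w$-equation, no spatial operator appears, so Duhamel's formula holds pointwise in $x$:
\begin{equation*}
w(t,x) = e^{-\delta t}w_0(x) + \int_0^t e^{-\delta(t-\tau)}F_w(\tau,x)\,\d\tau.
\end{equation*}
Taking the $\LL^r(\Omega)$-norm and applying Minkowski's integral inequality reduces all spatial considerations to scalar convolution estimates in time. Young's inequality, together with the $T$-independent integrals $\|e^{-\delta\cdot}\|_{\L^1(0,\infty)} = 1/\delta$, $\|e^{-\delta\cdot}\|_{\L^s(0,\infty)} = (\delta s)^{-1/s}$ and $\|e^{-\delta\cdot}\|_{\L^{s'}(0,\infty)} = (\delta s')^{-1/s'}$, yields
\begin{equation*}
\|w\|_{\L^\infty(0,T;\LL^r(\Omega))} + \|w\|_{\L^s(0,T;\LL^r(\Omega))} \leq C\bigl(\|w_0\|_{\LL^r(\Omega)} + \|F_w\|_{\L^s(0,T;\LL^r(\Omega))}\bigr),
\end{equation*}
while the equation itself gives $\|\partial_t w\|_{\L^s(\LL^r)} \leq \delta\|w\|_{\L^s(\LL^r)} + \|F_w\|_{\L^s(\LL^r)}$, which closes the estimate of $w$ in $Y_{s,r}^T$.

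Summing the two sets of bounds produces~\eqref{est-lin}, and uniqueness follows by applying the estimate to the difference of two solutions. The only real item requiring care—rather than a genuine obstacle—is the $T$-independence of all constants: for the $v$-component it comes from working with maximal regularity on the half-line, and for the $w$-component, from the exponential decay of $e^{-\delta t}$.
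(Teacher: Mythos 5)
Your proof is correct and follows essentially the same route as the paper: decouple the two equations, apply the maximal parabolic regularity of $\mathcal{A}_p$ (Assumption~\eqref{ass-max}) together with the trace-space identification $\BB^{1-2/s}_{p,s}(\Omega)$ for the $v$-component, and use Duhamel's formula for the $w$-component. The extra care you take with the $T$-independence of the constants (half-line formulation, Young's inequality with the explicit norms of $e^{-\delta\cdot}$) is a welcome elaboration of details the paper leaves implicit.
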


\begin{proof}
Note that the two evolution equations of~\eqref{syslin} are independent. First, in the same fashion as~\cite{Rehberg1}, we obtain existence and uniqueness of $v\in X_{s,p}^T$ solution of the first evolution equation, which satisfies
\begin{eqnarray*}
\|v\|_{X_{s,p}^T}  & \leq &
C\left(
\|v_0\|_{\BB^{1-2/s}_{p,s}(\Omega)}  + \|F_v\|_{\L^s(0,T;\WW^{-1,p}(\Omega))}\right).
\end{eqnarray*}
Second, the Duhamel's formula for the second evolution equation writes
\begin{eqnarray*}
w(t) & = & e^{-\delta t}w_0 + \int_0^t e^{-\delta(t-\tau)}F_w(\tau) \, \d \tau.
\end{eqnarray*}
This yields $\|w\|_{Y_{s,r}^T}   \leq C\left( \|w_0\|_{\LL^{r}(\Omega)}  + \|F_w\|_{\L^s(0,T;\LL^r(\Omega))}\right)$, and thus the announced estimate follows.
\end{proof}

\subsection{Wellposedness for the main system}

For proving existence of weak solutions for system~\eqref{sysmain}-\eqref{sysmain2}, we first need a set of technical lemmas.

\setcounter{lemma}{-1}

\begin{lemma} \label{lemma-basic}
Let $B$ be a Banach space. For $\varphi \in \W^{1,s}(0,T;B)$, one has
\begin{eqnarray*}
\|\varphi \|_{\L^s(0,T;B)} & \leq & T^{1/s}\|\varphi(0)\|_B
+ T \|\dot{\varphi} \|_{\L^s(0,T;B)}.
\end{eqnarray*}
\end{lemma}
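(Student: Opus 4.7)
The statement is an elementary $L^s$-estimate for a Bochner--Sobolev function, so the plan is direct: represent $\varphi$ via the fundamental theorem of calculus, take norms, and apply Hölder to relate $L^1$ and $L^s$ norms in time.

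First I would use the continuous embedding $\W^{1,s}(0,T;B) \hookrightarrow \mathcal{C}([0,T];B)$ (valid since $s>1$), which ensures that $\varphi(0) \in B$ makes sense and that the representation
\begin{equation*}
\varphi(t) = \varphi(0) + \int_0^t \dot{\varphi}(\tau)\, \d \tau
\end{equation*}
holds in $B$ for every $t \in [0,T]$. Taking the $B$-norm and using the triangle inequality in the Bochner integral yields, pointwise in $t$,
\begin{equation*}
\|\varphi(t)\|_B \leq \|\varphi(0)\|_B + \int_0^t \|\dot{\varphi}(\tau)\|_B \, \d \tau.
\end{equation*}

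Next I would bound the integral term uniformly in $t$ by Hölder's inequality with conjugate exponents $s$ and $s'$:
\begin{equation*}
\int_0^t \|\dot{\varphi}(\tau)\|_B \, \d \tau \leq \int_0^T \|\dot{\varphi}(\tau)\|_B \, \d \tau \leq T^{1/s'} \|\dot{\varphi}\|_{\L^s(0,T;B)}.
\end{equation*}
Thus $\|\varphi(t)\|_B \leq \|\varphi(0)\|_B + T^{1/s'} \|\dot{\varphi}\|_{\L^s(0,T;B)}$ for a.e. $t \in (0,T)$.

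Finally I would take the $\L^s(0,T)$-norm of both sides (using that the right-hand side is constant in $t$):
\begin{equation*}
\|\varphi\|_{\L^s(0,T;B)} \leq T^{1/s}\|\varphi(0)\|_B + T^{1/s} \cdot T^{1/s'} \|\dot{\varphi}\|_{\L^s(0,T;B)} = T^{1/s}\|\varphi(0)\|_B + T\|\dot{\varphi}\|_{\L^s(0,T;B)},
\end{equation*}
since $1/s + 1/s' = 1$. This is the announced inequality. There is no genuine obstacle here; the only point requiring minimal care is justifying the pointwise representation of $\varphi$, which follows from the standard Bochner--Sobolev embedding into continuous functions.
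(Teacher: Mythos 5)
Your proof is correct and follows essentially the same route as the paper: the absolutely continuous representation $\varphi(t)=\varphi(0)+\int_0^t\dot{\varphi}$, the triangle inequality, and Hölder in time. The only cosmetic difference is that you bound $\int_0^t\|\dot{\varphi}\|_B\,\d\tau$ uniformly by $T^{1/s'}\|\dot{\varphi}\|_{\L^s(0,T;B)}$ before taking the $\L^s$-norm in $t$, whereas the paper keeps the $t$-dependence and integrates $t^{s-1}$; both yield the stated constant $T$.
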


\begin{proof}
Note that $\varphi$ is absolutely continuous. It follows that $\varphi(t) =  \varphi(0) + \int_0^t \dot{\varphi}$ and thus
\begin{eqnarray*}
\|\varphi(t)\|_B & \leq & \|\varphi(0)\|_B + \int_0^t \| \dot{\varphi}(\tau) \|_B \d \tau , \\
\|\varphi(t)\|_{\L^s(0,T;B)} & \leq & T^{1/s}\|\varphi(0)\|_B
+ \left(\int_0^T\left(\int_0^t \| \dot{\varphi}(\tau) \|_B \d \tau\right)^s \d t\right)^{1/s} \\
&  \leq & T^{1/s}\|\varphi(0)\|_B + \left( \int_0^T t^{s-1} \d t
\left(\int_0^T \| \dot{\varphi}(\tau) \|^s_B \d \tau\right)\right)^{1/s},
\end{eqnarray*}
leading to the announced estimate.
\end{proof}

The following lemma gives Lipschitz estimates for the PDE part.

\begin{lemma} \label{lemma-estlip1} \label{lemma-stab}
Let $\rho$ be a globally Lipschitz function on $\R$, 
with $C_{\rho}$ as Lipschitz constant.
For all $v_1$, $v_2 \in X_{s,p}^T$ and $w_1$, $w_2 \in Y^T_{s,r}$, we have
\begin{eqnarray}
\| \rho(v_1)-\rho(v_2) \|_{\L^s(0,T;\WW^{-1,p}(\Omega))} & \leq &
CC_{\rho}\max\left(T^{1/2},T^{1/(2s)}\right)\left(  \|v_1(0)-v_2(0)\|_{\BB^{1-2/s}_{p,s}(\Omega)} +  \|v_1-v_2\|_{X_{s,p}^T}
\right), \qquad  \label{est-lip2}\label{est-lip1} \\
\| \rho(w_1)-\rho(w_2) \|_{\L^s(0,T;\WW^{-1,p}(\Omega))} & \leq & CC_{\rho} T^{1/s} \| w_1-w_2\|_{C([0,T];\LL^r(\Omega))} .
\label{est-lip22} \label{est-lip11}
\end{eqnarray}
\end{lemma}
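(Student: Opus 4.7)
Estimate~\eqref{est-lip11} is essentially immediate. The continuous embedding $\LL^r(\Omega)\hookrightarrow\WW^{-1,p}(\Omega)$ provided by~\eqref{ass-r}, combined with the pointwise inequality $|\rho(w_1)-\rho(w_2)|\leq C_\rho|w_1-w_2|$, reduces the left-hand side to $CC_\rho\|w_1-w_2\|_{\L^s(0,T;\LL^r(\Omega))}$. A trivial H\"{o}lder inequality in time then converts the $\L^\infty(0,T;\LL^r(\Omega))$-control built into $Y_{s,r}^T$ (equivalently, the $C([0,T];\LL^r(\Omega))$-norm) into the desired $\L^s$-norm, producing the factor $T^{1/s}$.

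Estimate~\eqref{est-lip1} is the substantive one. The obstruction is that the Lipschitz property of $\rho$ transfers only to Lebesgue-type spatial norms of $\rho(v_1)-\rho(v_2)$, whereas the target space $\WW^{-1,p}(\Omega)$ is of \emph{negative} order. My plan is first to perform the same $\LL^r$-reduction as above to obtain
\begin{equation*}
\|\rho(v_1)-\rho(v_2)\|_{\L^s(0,T;\WW^{-1,p}(\Omega))} \leq CC_\rho \|u\|_{\L^s(0,T;\LL^r(\Omega))},\qquad u := v_1-v_2,
\end{equation*}
and then to extract a vanishing-in-$T$ factor from $\|u\|_{\L^s(0,T;\LL^r(\Omega))}$ by a spatial interpolation in $x$. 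Using $\LL^p(\Omega)\hookrightarrow\LL^r(\Omega)$ (valid on the bounded domain since $r\leq p$) and the interpolation inequality
\begin{equation*}
\|u(t)\|_{\LL^p(\Omega)} \leq C\|u(t)\|_{\WW^{-1,p}(\Omega)}^{1/2}\|u(t)\|_{\WW^{1,p}(\Omega)}^{1/2},
\end{equation*}
which reflects the identification $[\WW^{-1,p}(\Omega),\WW^{1,p}(\Omega)]_{1/2}=\LL^p(\Omega)$, followed by raising to the $s$-th power, integrating in time and applying Cauchy--Schwarz, yields
\begin{equation*}
\|u\|_{\L^s(0,T;\LL^r(\Omega))} \leq C\|u\|_{\L^s(0,T;\WW^{-1,p}(\Omega))}^{1/2} \|u\|_{\L^s(0,T;\WW^{1,p}(\Omega))}^{1/2}.
\end{equation*}

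The second factor is directly dominated by $\|u\|_{X_{s,p}^T}$. For the first factor I apply Lemma~\ref{lemma-basic} with $B=\WW^{-1,p}(\Omega)$ and $\varphi=u$; using the embedding $\BB^{1-2/s}_{p,s}(\Omega)\hookrightarrow\WW^{-1,p}(\Omega)$, which follows from the interpolation identification $\BB^{1-2/s}_{p,s}(\Omega)=(\WW^{-1,p}(\Omega),\WW^{1,p}(\Omega))_{1-1/s,s}$ stated before~\eqref{super-embedding}, together with the obvious bound $\|\dot u\|_{\L^s(0,T;\WW^{-1,p}(\Omega))}\leq\|u\|_{X_{s,p}^T}$, I obtain an estimate of the form $T^{1/s}\|u(0)\|_{\BB^{1-2/s}_{p,s}(\Omega)}+T\|u\|_{X_{s,p}^T}$. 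Taking the square root distributes the two $T$-powers into $T^{1/(2s)}$ on the initial datum and $T^{1/2}$ on the $X_{s,p}^T$-norm; a Young-type inequality $a^{1/2}b^{1/2}\leq (a+b)/2$ absorbs the resulting cross term, and the two distinct powers merge into the announced factor $\max(T^{1/2},T^{1/(2s)})$.

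The main obstacle is precisely the bridging between the negative-order target norm, natural for the maximal-regularity setting, and the Lebesgue-type Lipschitz behaviour of $\rho$; the interpolation identity $[\WW^{-1,p},\WW^{1,p}]_{1/2}=\LL^p$, whose exponent $1/2$ is the source of the powers $1/(2s)$ and $1/2$ in the time factor, is the decisive technical ingredient.
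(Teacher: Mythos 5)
Your proposal is correct and follows essentially the same route as the paper's proof: reduction to a Lebesgue norm via the Lipschitz property and the embedding into $\WW^{-1,p}(\Omega)$, the interpolation identity $[\WW^{-1,p}(\Omega),\WW^{1,p}(\Omega)]_{1/2}=\LL^p(\Omega)$, Lemma~\ref{lemma-basic} for the $\L^s(0,T;\WW^{-1,p}(\Omega))$ factor, and Young's inequality to distribute the powers of $T$. The only (immaterial) difference is your intermediate passage through $\LL^r(\Omega)$ in the first estimate, where the paper goes directly through $\LL^p(\Omega)$.
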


\begin{proof}
From~\eqref{ass-r2} we have the continuous embedding $\LL^p(\Omega) \hookrightarrow \WW^{-1,p}(\Omega)$, and so
$\|\rho(v_1) - \rho(v_2)\|_{\WW^{-1,p}(\Omega)}  \leq C\|\rho(v_1)-\rho(v_2)\|_{\LL^{p}(\Omega)}$. The Lipschitz continuity of $\rho$ yields
\begin{eqnarray*}
\|\rho(v_1)-\rho(v_2) \|_{\WW^{-1,p}(\Omega)} & \leq & CC_{\rho}\|v_1-v_2\|_{\LL^{p}(\Omega)}, \\
\| \rho(v_1)-\rho(v_2) \|_{\L^s(0,T;\WW^{-1,p}(\Omega))} & \leq &
	C_{\rho} \|v_1-v_2 \|_{\L^s(0,T;\LL^{p}(\Omega))}.
\end{eqnarray*}
Now denote $v=v_1-v_2$. Since by interpolation we have $\LL^p(\Omega) = [\W^{-1,p}(\Omega);\W^{1,p}(\Omega)]_{1/2}$, we deduce
\begin{eqnarray*}
	\|v \|_{\L^s(0,T;\LL^{p}(\Omega))}
	& \leq & C \|v \|^{1/2}_{\L^s(0,T;\WW^{-1,p}(\Omega))}
	\|v \|^{1/2}_{\L^s(0,T;\WW^{1,p}(\Omega))} \\
	& \leq & C \left(T^{1/s}\|v(0)\|_{\WW^{-1,p}(\Omega)} + T\|v \|_{\W^{1, s}(0,T;\WW^{-1,p}(\Omega))}\right)^{1/2}
	\|v \|^{1/2}_{\L^s(0,T;\WW^{1,p}(\Omega))}\\
& \leq & C T^{1/(2s)}\left(\|v(0)\|_{\WW^{-1,p}(\Omega)} + T^{1-1/s}\|v \|_{\W^{1, s}(0,T;\WW^{-1,p}(\Omega))}\right)^{1/2}
	\|v \|^{1/2}_{\L^s(0,T;\WW^{1,p}(\Omega))},
\end{eqnarray*}
where we have used Lemma~\ref{lemma-basic}. Since $(a+b)^{1/2} \leq  a^{1/2}+b^{1/2}$ for all $a\geq 0$ and $b \geq 0$, we deduce
\begin{eqnarray*}
\|v \|_{\L^s(0,T;\LL^{p}(\Omega))} & \leq &
C T^{1/(2s)}\left( \|v(0)\|^{1/2}_{\WW^{-1,p}(\Omega)} + T^{1/2-1/(2s)}\|v \|^{1/2}_{\W^{1, s}(0,T;\WW^{-1,p}(\Omega))}\right)
\|v \|^{1/2}_{\L^s(0,T;\WW^{1,p}(\Omega))},
\end{eqnarray*}
and thus~\eqref{est-lip2} follows with the Young's inequality and by using $\BB^{1-2/s}_{p,s}(\Omega) \hookrightarrow \WW^{-1,p}(\Omega)$. From the continuous embedding $\LL^r(\Omega) \hookrightarrow \WW^{-1,p}(\Omega)$, obtaining~\eqref{est-lip22} is straightforward.
\end{proof}

Let us now give estimates for the non-linearity of the ODE part.
\begin{lemma} \label{lemma-estlip2}
Let $\rho$ be a Lipschitz function on $\R$. 
Denote by $C_{\rho}$ its Lipschitz constant.
For all $v_1, v_2 \in X_{s,p}^T$ and $w_1, w_2 \in C([0,T];\LL^r(\Omega))$, we have
\begin{eqnarray}
\| \rho(v_1)-\rho(v_2) \|_{\L^s(0,T;\LL^r(\Omega))} & \leq & CC_{\rho}\max\left(T^{1/2},T^{1/(2s)}\right)\left(  \|v_1(0)-v_2(0)\|_{\BB^{1-2/s}_{p,s}(\Omega)} + \|v_1-v_2\|_{X_{s,p}^T}\right), \qquad \label{est-lip5}\label{est-lip3}\\
\| \rho(w_1)-\rho(w_2) \|_{\L^s(0,T;\LL^r(\Omega))} & \leq & CC_{\rho}T^{1/s} \|w_1-w_2\|_{C([0,T];\LL^r(\Omega))}.
\label{est-lip6}\label{est-lip4}
\end{eqnarray}
\end{lemma}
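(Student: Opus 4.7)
The strategy mirrors the proof of Lemma~\ref{lemma-estlip1}. The key extra ingredient is that assumption~\eqref{ass-r} gives $r \le p$, and since $\Omega$ is bounded we get the continuous embedding $\LL^p(\Omega) \hookrightarrow \LL^r(\Omega)$; this lets us reduce the analysis on $\LL^r(\Omega)$ to one on $\LL^p(\Omega)$, where the interpolation identity $\LL^p(\Omega) = [\WW^{-1,p}(\Omega);\WW^{1,p}(\Omega)]_{1/2}$ is available and was already exploited in Lemma~\ref{lemma-estlip1}.

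For inequality~\eqref{est-lip3}, I would first apply the pointwise Lipschitz bound on $\rho$, which yields $\|\rho(v_1)-\rho(v_2)\|_{\LL^r(\Omega)} \le C_{\rho}\|v_1-v_2\|_{\LL^r(\Omega)}$. Using $\LL^p(\Omega) \hookrightarrow \LL^r(\Omega)$ this can be further estimated by $CC_{\rho}\|v_1-v_2\|_{\LL^{p}(\Omega)}$. Setting $v := v_1 - v_2$ and taking $\L^s(0,T;\cdot)$ norms, I interpolate between $\WW^{-1,p}(\Omega)$ and $\WW^{1,p}(\Omega)$, giving
\begin{equation*}
\|v\|_{\L^s(0,T;\LL^p(\Omega))} \le C\,\|v\|_{\L^s(0,T;\WW^{-1,p}(\Omega))}^{1/2}\|v\|_{\L^s(0,T;\WW^{1,p}(\Omega))}^{1/2}.
\end{equation*}
Applying Lemma~\ref{lemma-basic} to $v \in \W^{1,s}(0,T;\WW^{-1,p}(\Omega))$ to control the first factor by $T^{1/s}\|v(0)\|_{\WW^{-1,p}(\Omega)} + T\|\dot v\|_{\L^s(0,T;\WW^{-1,p}(\Omega))}$, together with $(a+b)^{1/2}\le a^{1/2}+b^{1/2}$, the embedding $\BB^{1-2/s}_{p,s}(\Omega) \hookrightarrow \WW^{-1,p}(\Omega)$, and Young's inequality, produces exactly the factor $\max(T^{1/2},T^{1/(2s)})$ that appears on the right-hand side, since $T^{1/(2s)}\cdot 1 = T^{1/(2s)}$ and $T^{1/(2s)}\cdot T^{1/2-1/(2s)} = T^{1/2}$.

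Inequality~\eqref{est-lip4} is the easy one: pointwise Lipschitz continuity of $\rho$ gives $\|\rho(w_1(t))-\rho(w_2(t))\|_{\LL^r(\Omega)} \le C_{\rho}\|w_1(t)-w_2(t)\|_{\LL^r(\Omega)} \le C_{\rho}\|w_1-w_2\|_{C([0,T];\LL^r(\Omega))}$ for a.e. $t\in(0,T)$, and integrating in time produces the prefactor $T^{1/s}$.

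The only real subtlety is in~\eqref{est-lip3}: one must take care that the reduction from $\LL^r(\Omega)$ to $\LL^p(\Omega)$ is legitimate, which is precisely why assumption~\eqref{ass-r} with $r \le p$ is used (the other endpoint of~\eqref{ass-r}, namely $r > dp/(d+p)$, is not needed for this lemma but is responsible for the embedding $\WW^{1,p}(\Omega) \hookrightarrow \LL^r(\Omega)$ elsewhere). Everything else is an almost verbatim transcription of the computations already performed in the proof of Lemma~\ref{lemma-estlip1}.
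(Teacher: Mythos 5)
Your proposal is correct and follows exactly the route the paper intends: the paper's own proof of this lemma is just the remark that it is ``very similar to the one of Lemma~1'' and is left to the reader, and your reduction via $\LL^p(\Omega)\hookrightarrow\LL^r(\Omega)$ (from $r\le p$ in~\eqref{ass-r} on a bounded domain) followed by the same interpolation/Lemma~0/Young computation is precisely that argument, with the correct time factors. Your side remark that only the upper endpoint $r\le p$ of~\eqref{ass-r} is used here is also accurate.
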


\begin{proof}
The proof is very similar to the one of Lemma~\ref{lemma-estlip1}, and so its verification left to the reader.
\end{proof}

From Lemma~\ref{lemma-estlip1} and \textcolor{black}{Lemma}~\ref{lemma-estlip2}, we deduce Lipschitz estimates for multi-dimensional nonlinear mappings.

\begin{lemma} \label{lemma-mu}
Assume that $\mu$ is a Lipschitz function from $\R^2$ to $\R$. 
For all $(v_1,w_1), (v_2,w_2) \in X^T_{s,p} \times Y^T_{s,r}$, we have
\begin{eqnarray*}
& & \| \mu(v_1,w_1) - \mu(v_2,w_2) \|_{\L^s(0,T;\WW^{-1,p}(\Omega))}
+ \| \mu(v_1,w_1) - \mu(v_2,w_2) \|_{\L^s(0,T;\LL^{r}(\Omega))} \\
& & \leq  CC_{\mu} \left(
\max\left(T^{1/2},T^{1/(2s)}\right)\left(  \|v_1(0)-v_2(0)\|_{\BB^{1-2/s}_{p,s}(\Omega)} + \|v_1-v_2\|_{X_{s,p}^T}\right)  + T^{1/s} \|w_1-w_2\|_{C([0,T];\LL^r(\Omega))} \right), \label{est-lip-mu2}\label{est-lip-mu1}
\end{eqnarray*}
where $C_{\mu}$ denotes the Lipschitz constant of~$\mu$.
\end{lemma}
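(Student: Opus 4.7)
\medskip
\noindent\textbf{Proof proposal.} The plan is to reduce to the one-variable situation already handled by Lemmas~\ref{lemma-estlip1} and~\ref{lemma-estlip2} by the standard ``freezing'' device. Insert the intermediate quantity $\mu(v_2,w_1)$ and apply the triangle inequality to split
\begin{equation*}
\mu(v_1,w_1)-\mu(v_2,w_2) = \bigl[\mu(v_1,w_1)-\mu(v_2,w_1)\bigr]
+ \bigl[\mu(v_2,w_1)-\mu(v_2,w_2)\bigr]
\end{equation*}
in both target norms $\L^s(0,T;\WW^{-1,p}(\Omega))$ and $\L^s(0,T;\LL^r(\Omega))$.

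For the first bracket, the Lipschitz hypothesis on $\mu$ yields the pointwise bound
$|\mu(v_1(x,t),w_1(x,t))-\mu(v_2(x,t),w_1(x,t))|\le C_{\mu}\,|v_1(x,t)-v_2(x,t)|$ \emph{for every} value of the frozen second argument. Consequently the very same chain of inequalities used in the proof of Lemma~\ref{lemma-estlip1} (embedding $\LL^p(\Omega)\hookrightarrow\WW^{-1,p}(\Omega)$, interpolation $\LL^p=[\WW^{-1,p};\WW^{1,p}]_{1/2}$, Lemma~\ref{lemma-basic} and Young's inequality) and the analogous chain from Lemma~\ref{lemma-estlip2} go through verbatim with $C_\rho$ replaced by $C_\mu$, producing the term
\begin{equation*}
CC_{\mu}\max\!\bigl(T^{1/2},T^{1/(2s)}\bigr)\bigl(\|v_1(0)-v_2(0)\|_{\BB^{1-2/s}_{p,s}(\Omega)}+\|v_1-v_2\|_{X_{s,p}^T}\bigr).
\end{equation*}
No genuine re-proof is needed; one is simply quoting the earlier lemmas applied to the map $v\mapsto \mu(v,w_1(x,t))$, whose Lipschitz constant is bounded by $C_\mu$ uniformly in $(x,t)$.

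For the second bracket, the symmetric freezing gives $|\mu(v_2,w_1)-\mu(v_2,w_2)|\le C_\mu |w_1-w_2|$ pointwise. Taking $\LL^r$-norms in space and then $\L^s$-norms in time, and using the embedding $\LL^r(\Omega)\hookrightarrow\WW^{-1,p}(\Omega)$ afforded by~\eqref{ass-r2} for the $\WW^{-1,p}$-component, one obtains in both norms the bound
$C_\mu\|w_1-w_2\|_{\L^s(0,T;\LL^r(\Omega))}\le C_\mu T^{1/s}\|w_1-w_2\|_{C([0,T];\LL^r(\Omega))}$, which is exactly the $w$-parts of estimates~\eqref{est-lip22} and~\eqref{est-lip4}. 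Summing the two contributions yields the announced inequality. The only delicate point is the interpolation step already treated in Lemma~\ref{lemma-estlip1}, so no genuine new obstacle appears; the proof is essentially a triangle-inequality splitting combined with a direct quotation of the preceding two lemmas.
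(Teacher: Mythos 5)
Your proposal is correct and follows essentially the same route as the paper: the paper obtains the pointwise splitting $|\mu(v_1,w_1)-\mu(v_2,w_2)|\le C_\mu(|v_1-v_2|+|w_1-w_2|)$ directly from the Lipschitz property on $\R^2$ (together with the embeddings $\LL^p\hookrightarrow\LL^r\hookrightarrow\WW^{-1,p}$ and $r\le p$), whereas you reach the identical splitting via the intermediate point $\mu(v_2,w_1)$ and the triangle inequality — a purely cosmetic difference. After that, both arguments conclude by quoting the chains of inequalities from Lemmas~\ref{lemma-estlip1} and~\ref{lemma-estlip2}, exactly as you do.
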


\begin{proof}
The continuous embeddings $\LL^p(\Omega) \hookrightarrow \LL^r(\Omega) \hookrightarrow \WW^{-1,p}(\Omega)$ yield
\begin{eqnarray*}
\|\mu(v_1,w_1)- \mu(v_2,w_2)\|_{\WW^{-1,p}(\Omega)} & \leq &  C\| \mu(v_1,w_1)- \mu(v_2,w_2) \|_{\LL^r(\Omega)} \\
& \leq & CC_{\mu} \left( \| v_1-v_2 \|_{\LL^r(\Omega)} + \| w_1-w_2 \|_{\LL^r(\Omega)} \right) \\
& \leq & CC_{\mu} \left( \| v_1-w_2 \|_{\LL^p(\Omega)} + \| w_1-w_2 \|_{\LL^r(\Omega)} \right),
\end{eqnarray*}
since $r \leq p$ from~\eqref{ass-r}. From here, one can combine the steps of the respective proofs of Lemma~\ref{lemma-estlip1} and Lemma~\ref{lemma-estlip2}, and then the results follow.
\end{proof}

From Lemma~\ref{lemma-mu} and Proposition~\ref{prop-lin}, let us derive a general existence result for the monodomain model with semilinear operators. This result will also be used in section~\ref{sec-lin} for obtaining local wellposedness of non-autonomous linear systems.

\begin{proposition} \label{prop-locexist}
Consider two Lipschitz mappings $\mu_v$ and $\mu_w$ from $\R^2$ to $\R$, such that $\mu_v(0,0) =\mu_w(0,0)= 0$. Then for $(v_0,w_0) \in \BB^{1-2/s}_{p,s}(\Omega) \times \LL^r(\Omega)$ and $(f_v,f_w) \in \L^s_{\mathrm{loc}}(0,\infty;\WW^{-1,p}(\Omega)) \times \L^s_{\mathrm{loc}}(0,\infty;\LL^r(\Omega))$, there exists a maximal time $T_0>0$ such that the following system
\begin{eqnarray}
\left\{ \begin{array} {ll}
\displaystyle \frac{\p v}{\p t} - \nu\Delta v + \mu_v(v,w) = f_v &
\text{in } \Omega\times (0,T),\\[10pt]
\displaystyle \frac{\p w}{\p t} + \delta w + \mu_w(v,w) = f_w &
\text{in } \Omega\times (0,T),\\[10pt]
\displaystyle \frac{\p v}{\p n} = 0 & \text{on } \p \Omega \times (0,T),\\[10pt]
(v,w)(\, \cdot \, ,0) = (v_0,w_0) & \text{in } \Omega.
\end{array} \right. \label{sysgeneral}
\end{eqnarray}
admits a unique weak solution in $X_{s,p}^T \times Y_{s,r}^T$ for all $0<T<T_0$. Moreover, the following estimate holds:
\begin{eqnarray}
\| (v,w) \|_{X^T_{s,p}\times Y^T_{s,r}} & \leq & C
\left( \|(v_0,w_0) \|_{\BB^{1-2/s}_{p,s}(\Omega) \times \L^r(\Omega)} +
 \|(f_v,f_w) \|_{\L^s(0,T;\W^{-1,p}(\Omega)) \times \L^s(0,T;\L^r(\Omega))} \right).
\label{estexistlocal-mu}
\end{eqnarray}
The lifespan $T_0$ depends on $(v_0,w_0)$, $(f_v,f_w)$, and decreasingly on the Lipschitz constants of $\mu_v$ and $\mu_w$.
\end{proposition}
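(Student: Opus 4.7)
The plan is to apply Banach's fixed-point theorem, exactly as suggested in the text, to the map
\[
\mathcal{T}: (\tilde v, \tilde w) \longmapsto (v,w),
\]
where $(v,w) \in X_{s,p}^T \times Y_{s,r}^T$ is the unique solution of the linear system~\eqref{syslin} with initial data $(v_0,w_0)$ and right-hand sides $F_v = f_v - \mu_v(\tilde v,\tilde w)$, $F_w = f_w - \mu_w(\tilde v,\tilde w)$, provided by Proposition~\ref{prop-lin}. Lemma~\ref{lemma-mu}, together with the continuous inclusion $Y_{s,r}^T \hookrightarrow C([0,T];\LL^r(\Omega))$ built into the very norm of $Y_{s,r}^T$, guarantees $(F_v, F_w) \in \L^s(0,T;\WW^{-1,p}(\Omega))\times \L^s(0,T;\LL^r(\Omega))$, so $\mathcal{T}$ is well defined, and a fixed point of $\mathcal{T}$ is, by construction, a weak solution of~\eqref{sysgeneral}.

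The first step will be self-mapping on a ball. Fix $R := 2C\bigl(\|(v_0,w_0)\|_{\BB^{1-2/s}_{p,s}\times \LL^r} + \|(f_v,f_w)\|_{\L^s(\WW^{-1,p})\times \L^s(\LL^r)}\bigr)$ with $C$ the constant from~\eqref{est-lin}, and denote by $\mathcal{B}_R$ the closed ball of radius $R$ around $0$ in $X_{s,p}^T\times Y_{s,r}^T$. For $(\tilde v,\tilde w)\in \mathcal{B}_R$, applying Lemma~\ref{lemma-mu} with reference point $(0,0)$ (legitimate thanks to $\mu_v(0,0)=\mu_w(0,0)=0$) gives, with $C_\mu := \max(C_{\mu_v},C_{\mu_w})$,
\[
\|\mu_v(\tilde v,\tilde w)\|_{\L^s(\WW^{-1,p})} + \|\mu_w(\tilde v,\tilde w)\|_{\L^s(\LL^r)} \leq CC_\mu \bigl( \max(T^{1/2},T^{1/(2s)})(\|v_0\|_{\BB^{1-2/s}_{p,s}}+R) + T^{1/s}\, R \bigr).
\]
Combining with Proposition~\ref{prop-lin} yields
\[
\|\mathcal{T}(\tilde v,\tilde w)\|_{X^T\times Y^T} \leq \frac{R}{2} + CC_\mu \bigl(\max(T^{1/2},T^{1/(2s)})(\|v_0\|_{\BB^{1-2/s}_{p,s}}+R) + T^{1/s}R\bigr),
\]
and the right-hand side is $\leq R$ for $T$ small enough --- in a way that is decreasing in $C_\mu$, $R$, and $\|v_0\|_{\BB^{1-2/s}_{p,s}}$.

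For the contraction property, take $(\tilde v_i,\tilde w_i) \in \mathcal{B}_R$, $i=1,2$. The difference $\mathcal{T}(\tilde v_1,\tilde w_1) - \mathcal{T}(\tilde v_2,\tilde w_2)$ solves~\eqref{syslin} with \emph{zero} initial data and sources $\mu_v(\tilde v_2,\tilde w_2)-\mu_v(\tilde v_1,\tilde w_1)$, $\mu_w(\tilde v_2,\tilde w_2)-\mu_w(\tilde v_1,\tilde w_1)$. Proposition~\ref{prop-lin} combined with Lemma~\ref{lemma-mu} --- where now the term $v_1(0)-v_2(0)$ vanishes --- produces
\[
\|\mathcal{T}(\tilde v_1,\tilde w_1) - \mathcal{T}(\tilde v_2,\tilde w_2)\|_{X^T\times Y^T} \leq CC_\mu\bigl(\max(T^{1/2},T^{1/(2s)}) + T^{1/s}\bigr)\|(\tilde v_1,\tilde w_1)-(\tilde v_2,\tilde w_2)\|_{X^T\times Y^T}.
\]
Shrinking $T$ further makes the prefactor strictly less than $1/2$, so Banach's theorem applies and delivers a unique weak solution on $[0,T]$. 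Estimate~\eqref{estexistlocal-mu} is then obtained by feeding this fixed point back into Proposition~\ref{prop-lin} and absorbing the small $T$-dependent nonlinear terms into the left-hand side.

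The maximal existence time $T_0$ is defined by iterating this local construction, each restart using the value of the solution at the previous endpoint as new initial condition --- which makes sense thanks to the embedding~\eqref{super-embedding} for $v$ and to the $\L^\infty(0,T;\LL^r)$ contribution included in the norm of $Y_{s,r}^T$ for $w$. The main technical obstacle, and the only delicate point, lies in keeping track at each continuation step of the precise dependence of the local lifespan on the data and on $C_{\mu_v}, C_{\mu_w}$: this is what underlies the announced monotonicity of $T_0$ with respect to the Lipschitz constants, and what guarantees the standard blow-up alternative (either $T_0=\infty$ or the $X^{T_0}\times Y^{T_0}$ norm of $(v,w)$ diverges as $T \nearrow T_0$).
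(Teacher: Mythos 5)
Your proposal is correct and follows essentially the same route as the paper: a Banach fixed-point argument on a closed ball of radius proportional to the data norm, with invariance and contraction both supplied by Proposition~\ref{prop-lin} together with Lemma~\ref{lemma-mu}, followed by a standard continuation argument for the maximal time. You simply spell out the details (in particular the role of $\mu_v(0,0)=\mu_w(0,0)=0$ for the self-mapping step and the vanishing initial-value term in the contraction step) that the paper leaves implicit.
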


\begin{proof}
Define the mapping
\begin{eqnarray*}
\mathcal{K}: X^T_{s,p} \times Y^T_{s,r} \ni (v_1,w_1)  \mapsto (v_2,w_2) \in X^T_{s,p} \times Y^T_{s,r},
\end{eqnarray*}
where $(v_2,w_2)$ is the solution of system~\eqref{syslin}, with $F = f_v - \mu_v(v_1,w_1)$ and $G = f_w - \mu_w(v_1,w_1)$ as right-and-sides. Define the following closed set
\begin{eqnarray*}
\mathcal{B}_T & = & \left\{ (v,w) \in X^T_{s,p} \times Y^T_{s,r} \mid \, \|(v,w) \|_{X^T_{s,p} \times Y^T_{s,r}} \leq 2CR
\right\},
\end{eqnarray*}
where the constant $C$ is the one which appears in estimate~\eqref{est-lin}, and
\begin{eqnarray*}
R & = & \|v_0\|_{\BB^{1-2/s}_{p,s}(\Omega)} + \| w_0 \|_{\LL^{r}(\Omega)} +
\|f_v\|_{\L^s(0,T;\WW^{-1,p}(\Omega))} + \| f_w\|_{\L^s(0,T;\LL^r(\Omega))}.
\end{eqnarray*}
Using~\eqref{est-lin} of Proposition~\ref{prop-lin}, combined with Lemma~\ref{lemma-mu}, by choosing $T$ small enough\textcolor{black}{, namely
\begin{equation*}
C_{\mu}\max(T^{1/s}, T^{1/(2s)}) \leq CR,
\end{equation*}}one can guarantee that $\mathcal{B}_T$ invariant under $\mathcal{K}$, and that $\mathcal{K}$ is a contraction on $\mathcal{B}_T$. Thus $\mathcal{K}$ admits a unique fixed point, namely a unique solution of system~\eqref{sysgeneral}. By a classical continuation argument, there exists a maximal $T_0$.
\end{proof}

In order to apply Proposition~\ref{prop-locexist} to system~\eqref{sysmain}-\eqref{sysmain2} \textcolor{black}{with the neural networks $(\phi_v,\phi_w)$ in the role of the mappings~$\mu_v$ and~$\mu_w$, one needs to verify that these neural networks given by~\eqref{sysmain2} fulfill the assumptions made for~$\mu_v$ and~$\mu_w$, that is they are Lipschitz}. A direct consequence of Lemmas~\ref{lemma-estlip1} and~\ref{lemma-estlip2} are the following stability properties for the neural networks:

\begin{lemma} \label{lemma-estlip3} \label{lemma-nem}
Assume that the activation function $\rho$ is globally Lipschitz. The Nemytskii operators $\phi_v$ and $\phi_w$ defined in~\eqref{sysmain2} are Lipschitz continuous from $\L^s(0,T;\LL^r(\Omega))\times \L^s(0,T;\LL^r(\Omega))$ to $\L^s(0,T;\LL^r(\Omega))$, with Lipschitz constant less than or equal to $C_{\rho}^{\mathcal{L}_{\mathcal{W}}}\|\mathbf{W}\|_{\mathcal{W}}^L$, \textcolor{black}{where $C_{\rho}$ denotes the Lipschitz constant of the activation function $\rho$, $L$ is the number of layers of $(\phi_v,\phi_w)$, and where we recall $\mathcal{L}_{\mathcal{W}} = \displaystyle \sum_{\ell=1}^{L-1} n_{\ell}$}. Furthermore, we have
\begin{eqnarray}
& & \|\phi_v(v_1,w_1) -\phi_v(v_2,w_2) \|_{\L^s(0,T;\WW^{-1,p}(\Omega))}
+ \|\phi_w(v_1,w_1) -\phi_w(v_2,w_2) \|_{\L^s(0,T;\LL^r(\Omega))} \nonumber \\
& & \leq
CC_{\rho}^{\mathcal{L}_{\mathcal{W}}} \| \mathbf{W}\|_{\mathcal{W}}^L \left( \max\left(T^{1/2},T^{1/(2s)}\right)
\left( \|v_1(0)-v_2(0)\|_{\BB^{1-2/s}_{p,s}(\Omega)} + \|v_1-v_2\|_{X_{s,p}^T} \right) \right. \nonumber \\
& & \left. + T^{1/s} \|w_1-w_2\|_{C([0,T];\LL^r(\Omega))}  \right) \label{super-estlip}
\end{eqnarray}
\textcolor{black}{for all $(v_1,w_1)$, $(v_2,w_2) \in X^T_{s,p} \times Y^T_{s,r}$.}
\end{lemma}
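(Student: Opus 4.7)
The plan is to reduce the statement to a pointwise Lipschitz bound on $\Phi : \R^2 \to \R^2$, and then invoke Lemma~\ref{lemma-mu} separately for the two components $\phi_v$ and $\phi_w$.

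First I would extract the Lipschitz constant of $\Phi$ as a map on $\R^2$. Since $\rho$ is globally Lipschitz with constant $C_\rho$, its a.e.\ derivative satisfies $|\rho'|\leq C_\rho$, so the chain-rule identity~\eqref{grad_nn} combined with the mean-value theorem yields~\eqref{resLip}, namely
\[
|\Phi(\mathrm{z}_1,\bW)-\Phi(\mathrm{z}_2,\bW)|_{\R^2} \;\leq\; \Big(\prod_{\ell=1}^{L}|A_\ell|\Big) C_\rho^{\mathcal{L}_{\mathcal{W}}} |\mathrm{z}_1-\mathrm{z}_2|_{\R^2}.
\]
From the definition of $\|\bW\|_{\mathcal{W}}$ one has $|A_\ell|\leq \|\bW\|_{\mathcal{W}}$ for each $\ell$, and the product runs over $L$ factors, so $\prod_{\ell=1}^L |A_\ell|\leq \|\bW\|_{\mathcal{W}}^L$. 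Both coordinate functions $\phi_v,\phi_w$ are therefore globally Lipschitz from $\R^2$ to $\R$ with Lipschitz constant at most $C_\rho^{\mathcal{L}_\mathcal{W}}\|\bW\|_{\mathcal{W}}^L$.

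Next, for the Nemytskii statement on $\L^s(0,T;\LL^r(\Omega))$, the pointwise Lipschitz estimate transfers directly: one has
\[
|\phi_v(v_1,w_1)(t,x)-\phi_v(v_2,w_2)(t,x)| \;\leq\; C_\rho^{\mathcal{L}_\mathcal{W}}\|\bW\|_{\mathcal{W}}^L \big(|v_1-v_2|+|w_1-w_2|\big)(t,x)
\]
for a.e.\ $(t,x)$, and integration in $\LL^r(\Omega)$ followed by $\L^s(0,T)$ together with Minkowski's inequality produces the desired bound (well-definedness of the Nemytskii operator is obtained by splitting $\phi_v(v,w)=(\phi_v(v,w)-\phi_v(0,0))+\phi_v(0,0)$, noting that the constant $\phi_v(0,0)$ belongs to $\L^s(0,T;\LL^r(\Omega))$ since $\Omega$ is bounded). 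The same treatment applies to $\phi_w$.

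Finally, for~\eqref{super-estlip} I would apply Lemma~\ref{lemma-mu} to $\mu=\phi_v$ and to $\mu=\phi_w$, each time with Lipschitz constant $C_\mu\leq C_\rho^{\mathcal{L}_\mathcal{W}}\|\bW\|_{\mathcal{W}}^L$ by Step~1, and sum the two resulting inequalities. The substantive analytic work has already been carried out in Lemmas~\ref{lemma-basic}, \ref{lemma-estlip1}, \ref{lemma-estlip2} and~\ref{lemma-mu}; the only genuinely new input is the layer-by-layer tracking of the Lipschitz constant. The main (minor) obstacle is bookkeeping: making sure that coordinate-wise application of $\rho$ interacts correctly with the Euclidean norms appearing in~\eqref{resLip} and that the $L$-fold matrix-norm product is dominated by $\|\bW\|_{\mathcal{W}}^L$. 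Once this is in place, the spatiotemporal estimate follows by a direct invocation of Lemma~\ref{lemma-mu}.
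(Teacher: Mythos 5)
Your proposal is correct and follows essentially the same route as the paper: the paper's proof simply observes that the Lipschitz constants of $\phi_v$ and $\phi_w$ are controlled by that of $\Phi$ via~\eqref{resLip} and then invokes Lemma~\ref{lemma-mu}. Your additional bookkeeping (bounding $\prod_{\ell}|A_\ell|$ by $\|\mathbf{W}\|_{\mathcal{W}}^L$ and checking well-definedness of the Nemytskii operator) only makes explicit what the paper leaves implicit.
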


\begin{proof}
The Lipschitz constants of $\phi_v$ and $\phi_w$ are controlled by the one of $\Phi = (\phi_v,\phi_w)$, which satisfies estimate~\eqref{resLip}. Then the results follow from Lemma~\ref{lemma-mu}.
\end{proof}

Then  Lemma~\ref{lemma-nem} and Proposition~\ref{prop-locexist} imply  local wellposedness of system~\eqref{sysmain}-\eqref{sysmain2}:

\begin{theorem} \label{th-exist0}
Assume that~\eqref{ass-ps}-\eqref{ass-r} hold, and that $\rho$ is globally Lipschitz. Then for $(v_0,w_0) \in \BB^{1-2/s}_{p,s}(\Omega) \times \LL^r(\Omega)$ and $(f_v,f_w) \in \L^s_{\mathrm{loc}}(0,\infty;\WW^{-1,p}(\Omega)) \times \L^s_{\mathrm{loc}}(0,\infty;\LL^r(\Omega))$, there exists a maximal time $T_0$ such that system~\eqref{sysmain}-\eqref{sysmain2} admits a unique weak solution in $X_{s,p}^T \times Y_{s,r}^T$ for all $T<T_0$. Moreover, the following estimate holds:
\begin{eqnarray}
\| (v,w) \|_{X^T_{s,p}\times Y^T_{s,r}} & \leq & CC_{\rho}^{\mathcal{L}_{\mathcal{W}}}\|\mathbf{W}\|_{\mathcal{W}}^L
\left(1+ \|(v_0,w_0) \|_{\BB^{1-2/s}_{p,s}(\Omega) \times \L^r(\Omega)} +
 \|(f_v,f_w) \|_{\L^s(0,T;\W^{-1,p}(\Omega)) \times \L^s(0,T;\L^r(\Omega))} \right). \nonumber \\
\label{estexistlocal}
\end{eqnarray}
The lifespan $T_0$ depends on $(v_0,w_0)$, $(f_v,f_w)$ and \textcolor{black}{decreasingly} on $C_{\rho}^{\mathcal{L}_{\mathcal{W}}}\|\mathbf{W}\|_{\mathcal{W}}^L$. 
\end{theorem}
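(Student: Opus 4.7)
The plan is to reduce the claim to Proposition~\ref{prop-locexist} by translating the neural-network nonlinearities so that they vanish at the origin. The obstruction to a direct application is that, because of the biases $b_\ell$, one has in general $\Phi(0) = (\phi_v(0,0),\phi_w(0,0)) \neq 0$, whereas Proposition~\ref{prop-locexist} requires the nonlinearities to vanish at $(0,0)$.

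First I would set $\tilde{\mu}_v(v,w) := \phi_v(v,w) - \phi_v(0,0)$ and $\tilde{\mu}_w(v,w) := \phi_w(v,w) - \phi_w(0,0)$. By~\eqref{resLip}, both are Lipschitz from $\R^2$ to $\R$ with Lipschitz constant bounded by $CC_\rho^{\mathcal{L}_\mathcal{W}}\|\bW\|_{\mathcal{W}}^L$, and they vanish at $(0,0)$. The system~\eqref{sysmain}--\eqref{sysmain2} is then equivalent to~\eqref{sysgeneral} with $(\mu_v,\mu_w)=(\tilde{\mu}_v,\tilde{\mu}_w)$ and data
\[
\tilde{f}_v := f_v - \phi_v(0,0), \qquad \tilde{f}_w := f_w - \phi_w(0,0),
\]
the constants being interpreted as constant functions on $\Omega$. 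Since $\Omega$ is bounded, these constants belong to $\LL^r(\Omega) \hookrightarrow \WW^{-1,p}(\Omega)$, hence $\tilde{f}_v \in \L^s_{\mathrm{loc}}(0,\infty;\WW^{-1,p}(\Omega))$ and $\tilde{f}_w \in \L^s_{\mathrm{loc}}(0,\infty;\LL^r(\Omega))$, as required by Proposition~\ref{prop-locexist}.

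Next I would apply Proposition~\ref{prop-locexist} to the shifted system, which delivers a maximal time $T_0>0$, a unique weak solution $(v,w) \in X^T_{s,p} \times Y^T_{s,r}$ for all $T<T_0$, and the a~priori estimate~\eqref{estexistlocal-mu} written in terms of $\tilde{f}_v,\tilde{f}_w$. To convert this into~\eqref{estexistlocal}, I would bound $|\Phi(0)|$ in terms of the weights. Using~\eqref{formulaPhi} together with the pointwise inequality $|\rho(x)| \leq |\rho(0)| + C_\rho|x|$ iteratively across the $L$ layers, I obtain
\[
|\Phi(0)| \leq CC_\rho^{\mathcal{L}_{\mathcal{W}}}\bigl(1 + \|\bW\|_{\mathcal{W}}^L\bigr),
\]
where the additive $1$ absorbs the propagation of the $|\rho(0)|$ contributions. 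Integrating this constant bound against $\chi_\Omega$ and using $|\Omega| < \infty$, one estimates
\[
\|\tilde{f}_v\|_{\L^s(0,T;\WW^{-1,p}(\Omega))} + \|\tilde{f}_w\|_{\L^s(0,T;\LL^r(\Omega))} \leq \|f_v\|_{\L^s(0,T;\WW^{-1,p}(\Omega))} + \|f_w\|_{\L^s(0,T;\LL^r(\Omega))} + CC_\rho^{\mathcal{L}_\mathcal{W}}T^{1/s}\bigl(1+\|\bW\|_{\mathcal{W}}^L\bigr),
\]
and reinjection into~\eqref{estexistlocal-mu}, after absorbing all factors into the prefactor $CC_\rho^{\mathcal{L}_\mathcal{W}}\|\bW\|_{\mathcal{W}}^L$, produces~\eqref{estexistlocal}. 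The dependence of $T_0$ on the data and on $C_\rho^{\mathcal{L}_\mathcal{W}}\|\bW\|_{\mathcal{W}}^L$ is inherited directly from the monotonicity statement of Proposition~\ref{prop-locexist}.

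The main technical burden is the inductive bound on $|\Phi(0)|$ across $L$ layers. For activations with $\rho(0)=0$ (e.g.\ ReLU) it is immediate; for a general globally Lipschitz $\rho$ one must carefully propagate the $|\rho(0)|$ terms through the successive compositions, but this is a routine induction along~\eqref{formulaPhi} and introduces no new analytic difficulty beyond the Lipschitz machinery of Lemma~\ref{lemma-nem} and the linear theory already packaged in Proposition~\ref{prop-locexist}.
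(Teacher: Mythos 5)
Your proposal is correct and follows essentially the same route as the paper: shift the nonlinearity by $\Phi(0)$ so that Proposition~\ref{prop-locexist} applies, absorb $\phi_v(0),\phi_w(0)$ into the source terms, and bound $\|(\phi_v(0),\phi_w(0))\|_{\WW^{-1,p}(\Omega)\times\LL^r(\Omega)}$ by $CC_\rho^{\mathcal{L}_{\mathcal{W}}}\|\mathbf{W}\|_{\mathcal{W}}^L$ using the Lipschitz structure of the network. The only difference is that you spell out the inductive propagation of $|\rho(0)|$ through the layers, which the paper leaves implicit.
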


\begin{proof}
This a direct consequence of Proposition~\ref{prop-locexist} with $\mu_v = \phi_v - \phi_v(0)$, $\mu_w = \phi_w - \phi_w(0)$, and $(f_v,f_w)$ replaced by $(f_v-\phi_v(0),f_w-\phi_w(0))$. Note that $\phi_v(0)$ and $\phi_w(0)$ are controlled as follows:
\begin{linenomath}\begin{equation*}
\|(\phi_v(0), \phi_w(0)) \|_{\WW^{-1,p}(\Omega) \times \LL^r(\Omega)} \leq
C C_{\rho}^{\mathcal{L}_{\mathcal{W}}} \|\mathbf{W}\|_{\mathcal{W}}^L.
\end{equation*}\end{linenomath}
Thus the result follows from the Lipschitz properties of $(\phi_v,\phi_w)$ given in Lemma~\ref{lemma-nem}.
\end{proof}

With additional assumptions, one can make the solutions of system~\eqref{sysmain}-\eqref{sysmain2} global.

\begin{theorem} \label{th-global} \label{th-exist}
Assume that the assumptions of Theorem~\ref{th-exist0} hold. In addition to~\eqref{ass-ps}-\eqref{ass-r}, assume that $s\leq 2$ and $r \leq 2$, and that $(f_v,f_w) \in \L_{\mathrm{loc}}^2(0,\infty;\LL^2(\Omega)) \times \L_{\mathrm{loc}}^2(0,\infty;\LL^2(\Omega))$. Then system~\eqref{sysmain}-\eqref{sysmain2} admits a unique global solution in $X_{2,p}^\infty$.
\end{theorem}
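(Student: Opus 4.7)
The plan is a continuation argument based on a priori $\LL^2$-energy estimates, exploiting the fact that the Lipschitz property of $\rho$ forces the neural network $\Phi=(\phi_v,\phi_w)$ to have at most linear growth. By Theorem~\ref{th-exist0} there is a maximal lifespan $T_0\in(0,\infty]$, and I would argue by contradiction that $T_0<\infty$ is impossible by producing bounds on $(v,w)$ in $X^T_{2,p}\times Y^T_{2,r}$ that remain finite as $T\to T_0^-$.

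The first step is a Hilbert energy estimate. Formula~\eqref{resLip} applied with $\mathrm{z}_2=0$ gives the pointwise bound $|\Phi(\mathrm{z})|\leq|\Phi(0)|+C_\rho^{\mathcal{L}_\mathcal{W}}\|\mathbf{W}\|_\mathcal{W}^L|\mathrm{z}|$. Using the new hypotheses $s\leq 2$, $r\leq 2$ and $f_v,f_w\in\LL^2_{\mathrm{loc}}(0,\infty;\LL^2(\Omega))$, I test the first equation of~\eqref{sysmain} against $v$ and the second against $w$, integrate over $\Omega$ using the Neumann condition, and estimate the nonlinear contributions by Cauchy--Schwarz and Young's inequality. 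This yields a differential inequality of the form
\[
\frac{d}{dt}\bigl(\|v\|^2_{\LL^2}+\|w\|^2_{\LL^2}\bigr)+\nu\|\nabla v\|^2_{\LL^2}+\delta\|w\|^2_{\LL^2}\leq c_1\bigl(\|v\|^2_{\LL^2}+\|w\|^2_{\LL^2}\bigr)+c_2\bigl(1+\|f_v\|^2_{\LL^2}+\|f_w\|^2_{\LL^2}\bigr),
\]
with $c_1,c_2$ depending on $\mathbf{W}$, $\Omega$, $\nu$, $\delta$ but not on $T$. Grönwall's lemma then produces, for every finite $T$, a bound $M(T)$ on $\|v\|_{\LL^\infty(0,T;\LL^2)}+\|w\|_{\LL^\infty(0,T;\LL^2)}+\|\nabla v\|_{\LL^2(0,T;\LL^2)}$.

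The second step is a bootstrap. Because $r\leq 2$ and $\Omega$ is bounded, $\LL^2(\Omega)\hookrightarrow\LL^r(\Omega)$, and by~\eqref{ass-r2} also $\LL^r(\Omega)\hookrightarrow\WW^{-1,p}(\Omega)$. The linear growth of $\Phi$ combined with the bound $M(T)$ therefore gives $\phi_v(v,w),\phi_w(v,w)\in\LL^2(0,T;\LL^r(\Omega))\hookrightarrow\LL^2(0,T;\WW^{-1,p}(\Omega))$. Treating~\eqref{sysmain} as the linear system~\eqref{syslin} with right-hand sides $f_v-\phi_v(v,w)$ and $f_w-\phi_w(v,w)$, Proposition~\ref{prop-lin} applied with $s=2$ yields $\|v\|_{X^T_{2,p}}+\|w\|_{Y^T_{2,r}}\leq\widetilde{M}(T)<\infty$ for all $T<T_0$. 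By the embedding~\eqref{super-embedding} this uniformly bounds $(v(t),w(t))$ in $\BB^0_{p,2}(\Omega)\times\LL^r(\Omega)$ up to $T_0$. Were $T_0$ finite, restarting Theorem~\ref{th-exist0} from $(v(T_0-\varepsilon),w(T_0-\varepsilon))$ for $\varepsilon$ small would extend the solution strictly past $T_0$, contradicting maximality. Hence $T_0=\infty$.

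The principal obstacle is the rigorous justification of the $\LL^2$-energy identity, since the weak solution $v$ lies a priori only in $\LL^2(0,T;\WW^{1,p})\cap\WW^{1,2}(0,T;\WW^{-1,p})$ with $p<d$, so $v$ is not automatically an admissible test function for its own equation in the duality $\langle\,\cdot\,;\,\cdot\,\rangle_{\WW^{-1,p};\WW^{1,p'}}$. I would handle this by a smoothing argument: approximate the data $(v_0,w_0,f_v,f_w)$ and the activation $\rho$ by smooth counterparts with controlled Lipschitz constants, derive the energy identity for the corresponding classical solutions, and pass to the limit using the Lipschitz stability of Lemma~\ref{lemma-nem} together with the lower semicontinuity of norms under weak convergence. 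Alternatively, when $p\geq 2d/(d+2)$ one may exploit~\eqref{super-embedding} with $s=2$ and the Sobolev embedding $\WW^{1,p}(\Omega)\hookrightarrow\LL^2(\Omega)$, then invoke a parabolic chain rule to legitimize testing directly against $v$.
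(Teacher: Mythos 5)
Your proof follows essentially the same route as the paper's: an $\LL^2$ energy estimate obtained by testing with $(v,w)$, Gr\"onwall's lemma, and a continuation argument resting on the embeddings $\LL^2(\Omega)\hookrightarrow \BB^{1-2/s}_{p,s}(\Omega)$ and $\LL^2(\Omega)\hookrightarrow\LL^r(\Omega)$ valid for $s\leq 2$, $r\leq 2$. The only substantive differences are refinements on your side: you add a maximal-regularity bootstrap via Proposition~\ref{prop-lin} to land in $X^T_{2,p}$ and restart from $T_0-\varepsilon$ rather than from $T_0$ itself, and---unlike the paper, which takes the energy identity~\eqref{L2-est} for granted---you correctly flag that $v$ is not a priori an admissible test function for its own equation when $p<2$ and propose a workable approximation argument to justify it.
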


\begin{proof}
We first prove an energy estimate for system~\eqref{sysmain}-\eqref{sysmain2} on $(0,T_0)$, where $T_0$ is the maximal time of existence, assumed to be finite, and next proceed by contradiction.\\
{\it Step 1. } By multiplying the first and second equations of~\eqref{sysmain} by $v$ and $w$, respectively, integrating, and summing the two resulting identities, we obtain the following equality:
\begin{linenomath}\begin{equation}
\frac{1}{2}\frac{\d}{\d t}\left(\|v\|^2_{\LL^2(\Omega)} + \|w\|^2_{\LL^2(\Omega)}\right)
+ \nu \|\nabla v\|^2_{[\LL^2(\Omega)]^d} + \delta \|w\|_{\LL^2(\Omega)}^2  =  \left\langle (f_v,f_w) - \Phi(v,w); (v,w) \right\rangle_{\LL^2(\Omega) \times \LL^2(\Omega)}.
\label{L2-est}
\end{equation}\end{linenomath}
Besides, it is easy to prove the following estimate:
\begin{eqnarray*}
\| \Phi(v,w)\|_{\LL^2(\Omega)\times \LL^2(\Omega)} & \leq &
C C_{\rho}^{\mathcal{L}_{\mathcal{W}}} \|\mathbf{W}\|_{\mathcal{W}}^L
\left(1+\| (v,w) \|_{\LL^2(\Omega) \times \LL^2(\Omega)}\right).
\end{eqnarray*}
Using the Cauchy-Schwarz inequality combined with the previous estimate, equality~\eqref{L2-est} yields
\begin{linenomath}\begin{equation*}
\frac{\d}{\d t}\left(\|v\|^2_{\LL^2(\Omega)} + \|w\|^2_{\LL^2(\Omega)}\right)
+ \nu \|\nabla v\|^2_{[\LL^2(\Omega)]^d} + \delta \|w\|_{\LL^2(\Omega)}^2  \leq  C\left(1+\|(f_v,f_w)\|_{\LL^2(\Omega)\times \LL^2(\Omega)}^2 + \|v\|^2_{\LL^2(\Omega)} + \|w\|^2_{\LL^2(\Omega)} \right),
\end{equation*}\end{linenomath}
in $(0,T_0)$. Assuming that $T_0 < \infty$. The Gr\"{o}nwall's lemma then yields
\begin{eqnarray}
& & \sup_{(0,T_0)}\left\{(\|v\|^2_{\LL^2(\Omega)} + \|w\|^2_{\LL^2(\Omega)} \right\}
+ \nu \int_0^{T_0} \|\nabla v\|^2_{[\LL^2(\Omega)]^d} \d t
+ \delta  \int_0^{T_0} \|w\|^2_{\LL^2(\Omega)} \d t \nonumber \\
& & \leq
\left(1+\|(f_v,f_w)\|_{\L^2(0,T_0;\LL^2(\Omega)\times \LL^2(\Omega))}^2 + \|v_0\|^2_{\LL^2(\Omega)} + \|w_0\|^2_{\LL^2(\Omega)}  \right)\exp(CT_0),
\label{estenergy}
\end{eqnarray}
where the constant $C>0$ is generic, but depending on~$\mathbf{W}$ and $C_{\rho}$.

{\it Step 2. } Inequality~\eqref{estenergy} shows that $t\mapsto \|v(t)\|^2_{\LL^2(\Omega)} + \|w(t)\|^2_{\LL^2(\Omega)}$ is bounded on $(0,T_0)$. Besides, for $s\leq 2$ and $r\leq 2$ we have $\LL^2(\Omega) \hookrightarrow \BB^{1-2/s}_{p,s}(\Omega)$ and $\LL^2(\Omega) \hookrightarrow \L^r(\Omega)$, respectively. Then from $v(T_0), w(T_0) \in \LL^2(\Omega)$ we can extend the solution $t\mapsto (v(t),w(t))$ on $(0,T_0 + \epsilon)$, for some $\varepsilon>0$, in virtue of Theorem~\ref{th-exist0}. This contradicts the definition of $T_0$ as an upper bound, and thus $T_0 = \infty$.
\end{proof}

\subsection{Global existence results for other functional frameworks}

Let us \textcolor{black}{investigate the applicability} of the results in the literature dealing with the issue of global existence in this context of semilinear equations, either by modifying regularity on the data, or by considering more general nonlinearities.

\subsubsection{For initial conditions in critical spaces}

Following the results of~\cite{Wilke2018} dealing with solutions of parabolic equations in weighted Sobolev spaces, with initial conditions considered in the so-called {\it critical spaces}, one can weaken the regularity assumption required for the initial conditions $(v_0,w_0)$. In this fashion,~\cite[Theorem~3.2, section~3.2]{Hieber2018} states that $v_0$, for the FitzHugh-Nagumo model, can be actually considered in $\BB^{2\mu - 1 - 2/s}_{p,s}$ for $\mu \geq \mu_c := 1/s + d/(2p)$. In our case, the nonlinearity is Lipschitz (and not negative cubic like in~\cite{Hieber2018}), and thus~\cite[Theorem~2.1]{Wilke2018} yields the following result:

\begin{theorem}
Assume that $1<s<p<\infty$. For $(v_0,w_0) \in \BB^{-1+\varepsilon}_{p,s}(\Omega) \times \LL^r(\Omega)$, with $0<\varepsilon \leq 2-2/s$, and $(f_v,f_w) \in \L_{\mathrm{loc}}^s(0,\infty;\WW^{-1,p}(\Omega)) \times \L_{\mathrm{loc}}^s(0,\infty;\LL^r(\Omega))$. Then there exists $T_0 >0$, and a unique solution $(v,w)$ to system~\eqref{sysmain}-\eqref{sysmain2} such that
\begin{eqnarray*}
v\in X_{s,p,\varepsilon}^{T_0}   \stackrel{\mathrm{def}}{\Longleftrightarrow}
\left\{\begin{array}{l}
 t^{1-1/s-\varepsilon/2}v \in \L^s(0,T_0; \WW^{1,p}(\Omega)) \\ 
 t^{1-1/s-\varepsilon/2}\dot{v} \in \L^s(0,T_0; \WW^{-1,p}(\Omega))
\end{array} \right. ,
 & &
w \in Y_{s,r}^{T_0},
\end{eqnarray*}
and the following alternative holds:
\begin{eqnarray*}
& (i) & \text{either } T_0 = \infty,\\
& (ii) & \text{or } \lim_{t\mapsto T_0} \left( \| v(t)\|_{B^{-1+\varepsilon}_{p,s}(\Omega)} + \| w(t) \|_{\L^r(\Omega)} \right) = \infty
\end{eqnarray*}
Furthermore, if $r\leq 2$ and $v_0, w_0 \in \LL^2(\Omega)$, then $T_0 = \infty$.
\end{theorem}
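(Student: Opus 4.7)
The plan is to follow the strategy of Proposition~\ref{prop-locexist} and Theorem~\ref{th-exist0}, but to replace the standard maximal regularity framework by the time-weighted one developed in~\cite{Wilke2018}, which allows critical initial data in $\BB^{-1+\varepsilon}_{p,s}(\Omega)$. Thanks to Assumption~\eqref{ass-max}, the operator $\mathcal{A}_p$ enjoys $\L^s$-maximal parabolic regularity on $\WW^{-1,p}(\Omega)$; by~\cite[Theorem~2.1]{Wilke2018} this extends to $\L^s$-maximal regularity with the time weight $t^{1-1/s-\varepsilon/2}$, so that the linear problem $\dot{v}+\mathcal{A}_p v = F_v$, $v(0)=v_0$, is well-posed in $X_{s,p,\varepsilon}^T$ for $v_0 \in \BB^{-1+\varepsilon}_{p,s}(\Omega)$ and $F_v$ in the correspondingly weighted $\L^s$-space, together with the continuous embedding $X_{s,p,\varepsilon}^T \hookrightarrow C([0,T]; \BB^{-1+\varepsilon}_{p,s}(\Omega))$. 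The second equation is a pure ODE in $\LL^r(\Omega)$ and is handled via Duhamel's formula exactly as in Proposition~\ref{prop-lin}, giving $w \in Y_{s,r}^T$.

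Next I would adapt the Lipschitz estimates of Lemmas~\ref{lemma-estlip1}--\ref{lemma-nem} to the weighted scale. Since $\Phi=(\phi_v,\phi_w)$ is globally Lipschitz with constant $C_\rho^{\mathcal{L}_{\mathcal{W}}}\|\bW\|_{\mathcal{W}}^L$ by~\eqref{resLip}, and since the pointwise-in-time composition $\Phi(v(t),w(t))$ commutes with multiplication by the time weight, the arguments carry over essentially verbatim; on short intervals the powers of $T$ still yield a contraction factor. The Banach fixed-point theorem applied to the map analogous to $\mathcal{K}$ of Proposition~\ref{prop-locexist}, but on a closed ball of $X_{s,p,\varepsilon}^T \times Y_{s,r}^T$, then produces a unique local solution on a maximal interval $(0,T_0)$.

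The dichotomy between $T_0=\infty$ and blow-up of $\|v(t)\|_{\BB^{-1+\varepsilon}_{p,s}} + \|w(t)\|_{\LL^r}$ as $t\to T_0^-$ follows from a standard continuation argument: if this quantity remained bounded, one could restart the local theorem at data $(v(T_0-\eta), w(T_0-\eta))$ and obtain an extra existence time $\tau>0$ that depends only on the bound and not on $\eta$, thereby contradicting the maximality of $T_0$. For the global part, under the extra hypotheses $r\leq 2$, $v_0,w_0\in \LL^2(\Omega)$ and $(f_v,f_w)\in \L^2_{\mathrm{loc}}(0,\infty;\LL^2(\Omega))^2$, I would simply recycle Step~1 of the proof of Theorem~\ref{th-global}: testing the two equations against $(v,w)$, combining with the $\LL^2$-bound $\|\Phi(v,w)\|_{\LL^2\times \LL^2}\leq CC_\rho^{\mathcal{L}_{\mathcal{W}}}\|\bW\|_{\mathcal{W}}^L(1+\|(v,w)\|_{\LL^2\times \LL^2})$ and Grönwall's lemma delivers an a priori $\L^\infty_t(\LL^2_x)$ bound on $(0,T_0)$. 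The embeddings $\LL^2(\Omega)\hookrightarrow \BB^{-1+\varepsilon}_{p,s}(\Omega)$ and $\LL^2(\Omega)\hookrightarrow \LL^r(\Omega)$ (valid for $r\leq 2$ and in the parameter range of interest) then exclude alternative~$(ii)$, so $T_0=\infty$.

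The main obstacle, as I see it, is not the fixed-point step but the book-keeping that ensures the weighted $\L^s$-maximal regularity of $\mathcal{A}_p$ on $\WW^{-1,p}(\Omega)$ and the trace identification $\BB^{-1+\varepsilon}_{p,s}(\Omega)$ are both covered by~\cite[Theorem~2.1]{Wilke2018} under~\eqref{ass-max} together with a compatibility condition of the form $\varepsilon/2+1/s \geq \mu_c = 1/s + d/(2p)$ matching the weight exponent to the regularity index $-1+\varepsilon$ of the initial datum. Once that identification is in place, the whole argument reduces to the same contraction and energy-estimate scheme already deployed in section~\ref{sec-para}.
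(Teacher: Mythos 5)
Your proposal is correct and follows essentially the same route as the paper: weighted $\L^s$-maximal regularity from \cite[Theorem~2.1]{Wilke2018} for the linear part with $v_0\in \BB^{2\mu-1-2/s}_{p,s}(\Omega)$ and $\varepsilon=2\mu-2/s$, the fixed-point scheme of Proposition~\ref{prop-locexist} transplanted to $X^T_{s,p,\varepsilon}\times Y^T_{s,r}$ via the Lipschitz estimates of Lemma~\ref{lemma-mu} and the embedding $\BB^{-1+\varepsilon}_{p,s}(\Omega)\hookrightarrow \WW^{-1,p}(\Omega)$ for $p\geq s$, a continuation argument for the alternative, and the $\LL^2$ energy estimate of Theorem~\ref{th-global} for globality. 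The only nuance is the compatibility condition you flag at the end: the paper's position is that, the nonlinearity being globally Lipschitz rather than cubic, no criticality restriction $\mu\geq\mu_c=1/s+d/(2p)$ is needed and any $\mu>1/s$ works, which is why $\varepsilon$ may be taken arbitrarily small.
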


\begin{proof}
The proof is again a fixed-point argument. First observe that the trace space of $ X_{s,p,\varepsilon}^{T}$ is $\BB^{-1+\varepsilon}_{p,s}(\Omega)$. In view of what precedes, let us just sketch the proof. First,~\cite[Theorem~2.1]{Wilke2018} applies to~\eqref{syslin} with $v_0 \in \BB^{2\mu -1 - 2/s}_{p,s}(\Omega)$ for every $\mu > 1/s$ arbitrarily small. Denote $\varepsilon = 2\mu - 2/s$. Then~\eqref{est-lin} holds with $\|v_0\|_{\BB^{-1+\varepsilon}_{p,s}(\Omega)}$ instead of $\|v_0\|_{\BB^{1-2/s}_{p,s}(\Omega)}$. Next, in view of the estimate of Lemma~\ref{lemma-mu}, and since $\BB^{-1+\varepsilon}_{p,s}(\Omega) \hookrightarrow \BB^{-1}_{p,p}(\Omega) \equiv \WW^{-1,p}(\Omega)$ for $p\geq s$, the same fixed-point strategy can be repeated as for proving Proposition~\ref{prop-locexist} while assuming $v_0 \in \BB^{-1+\varepsilon}_{p,s}(\Omega)$ only, and replacing $X_{s,p}^T$ by $X_{s,p,\varepsilon}^T$. Alternative~$(i)$-$(ii)$ is obtained with a classical continuation argument. Furthermore, by noticing that $\LL^2(\Omega) \hookrightarrow \BB^{-1+\varepsilon}_{s,p}(\Omega)$, the proof of Theorem~\ref{th-global} can also be repeated, and thus we obtained the last assertion.
\end{proof}

\subsubsection{In the case of low regularity data}

In~\cite{Amann2004} a weaker functional setting is studied for semilinear parabolic equations of type
\begin{eqnarray*}
\dot{v} - \nu \Delta v = \mu(v),
\end{eqnarray*}
where the operator\footnote{Here the notation $C^{1-}_b$ means {\it Lipschitz and bounded}.} $\mu \in C^{1-}_b\left(\L^s_{\text{loc}}(\R^+;\WW^{1,p}(\Omega)); \mathcal{M}_{\mathrm{loc}}(\R^+; \WW^{\sigma-2,p}(\Omega)) \right)$, for any $1<\sigma < 2$, has the so-called {\it Volterra} property (see~\cite[Theorem~3.2]{Amann2004}). The solution lies only in $\L^s(0,T;\W^{1,p}(\Omega))$, assuming that $1 \leq s < 2/(3-\sigma)$. The dependence of the solution is Lipschitz with respect to the operator $\mu$. Next this framework was considered in~\cite{Amann2006} for optimal control problems involving this class of semilinear equations as state constraints. It is particularly well-chosen when the initial data lie in measure spaces. In the case of the present article, the specific form of the nonlinearities given by $\phi_v$ offers more regularity than $\mathcal{M}_{\mathrm{loc}}(\R^+; \WW^{\sigma-2,p}(\Omega))$, and so such a functional setting would be relevant only when considering initial data in measure spaces, up to replacing $\mu$ by $\mu + v_0 \otimes \delta_0$.

\subsubsection{Semilinear equations with more general nonlinearity.} \label{secthAmann}

Let us mention that in~\cite[section~3]{Amann2004} a general result is given for semilinear equations with other assumptions on the nonlinearity. Thus~\cite[Theorem~3.3]{Amann2004} states in a particular case that if $1<\sigma < \min(2, 1+2/s)$ with $\sigma, \sigma-2/s \in [-2;2] \setminus\{1+1/p; -2+1/p\}$, and if
\begin{linenomath}\begin{equation*}
\mu \in C^{1-}_b\left(\L^s_{\text{loc}}(\R^+;\WW^{1,p}(\Omega)); \L^s_{\text{loc}}(\R^+; \WW^{\sigma-2,p}(\Omega))\right)
\end{equation*}\end{linenomath}
has the {\it Volterra} property, then, for all $v_0 \in \WW^{\sigma-2/s,p}(\Omega)$, the following system
\begin{eqnarray*}
\dot{v} -\nu \Delta v = \mu(v), \ t\in (0,\infty), & & v(0) = v_0,
\end{eqnarray*}
admits a unique maximal solution $v\in \L^s_{\text{loc}}(\R^+;\WW^{1,p}(\Omega))$. Moreover,
the mapping
\begin{eqnarray*}
\WW^{\sigma-2/s,p} \times C^{1-}_b\left(\L^s_{\mathrm{loc}}(\R^+;\WW^{1,p}(\Omega)); \L_{\mathrm{loc}}^s(\R^+; \WW^{\sigma-2,p}(\Omega))\right)
\ni (v_0, \mu) & \mapsto & v \in \L^s_{\mathrm{loc}}(\R^+;\WW^{1,p}(\Omega))
\end{eqnarray*}
is Lipschitz continuous.


Note that such a functional framework does not straightforwardly apply to our coupling between the semilinear equation and the ordinary differential equation. However, this framework could be adopted for treating optimal control problems with other models of parabolic equations as state constraints.

\section{Formulation of an optimal control problem and existence of minimizers} \label{sec-opt}

Recall that we assumed $1<s$, $1<p<d$ and $(dp)/(d+p) <r \leq p$. \textcolor{black}{We are interested in the following abstract optimal control problem:
\begin{linenomath}\begin{equation} \label{mainpb} \tag{$\mathcal{P}$}
\left\{ \begin{array} {l}
\displaystyle \min_{\mathbf{W} \in \mathcal{W}} J(\mathrm{z},\mathbf{W})
, \\[10pt]
\text{subject to $(\mathrm{z}=(v,w),\bW)$ satisfying~\eqref{sysmain}-\eqref{sysmain2}, and } \| \mathbf{W}\|^2_{\mathcal{W}} \leq C. 
\end{array} \right.
\end{equation}\end{linenomath}
We assume that the functional $J:X^T_{s,p} \times Y_{s,r}^T \rightarrow \R$ is only continuously Fr\'echet-differentiable. For example, still by denoting $\mathrm{z} = (v,w)$, let us consider smooth convex objective functions in the following form:}
\begin{eqnarray}
J(\mathrm{z},\mathbf{W}) & = & \displaystyle
R(\mathrm{z}) + R_T(\mathrm{z}), \quad \text{where} \label{choice-func} \\
R(\mathrm{z}) & = & \frac{1}{s} \int_0^T \|v - v_{\mathrm{data}} \|_{\LL^r(\Omega)}^s\d t
+ \frac{1}{s} \int_0^T\| w - w_{\mathrm{data}}\|^s_{\LL^r(\Omega)}\d t, \nonumber \\
R_T(\mathrm{z}) & = &
\frac{1}{2}\|v(T) - v^{(T)}_{\mathrm{data}}\|_{\LL^2(\Omega)}^2
+ \frac{1}{r}\|w(T) - w^{(T)}_{\mathrm{data}} \|^r_{\LL^r(\Omega)}. \nonumber
\end{eqnarray}
\textcolor{black}{Proving that $R$, $R_T$, and consequently $J$ are continuously Fr\'echet-differentiable can be achieved with standard techniques from optimal control theory.} 

Note that \textcolor{black}{since the set of control parameters $\mathbf{W}$ is compact, Theorem~\ref{th-exist0} guarantees} the existence of solutions $\mathrm{z}$ of system~\eqref{sysmain}-\eqref{sysmain2} for which the lifespan $T_0$ is independent of the control parameters $\mathbf{W}$. For this reason we assume that $T \in (0,T_0)$.

\begin{remark}
One could also consider problem~\eqref{mainpb} without a norm constraint on $\bW$. In that case, in order to have a uniform lifespan $T_0 >0$ for the state equation, the restricted framework of Theorem~\ref{th-global}, namely $s\leq 2$ and $r\leq 2$,  could be adopted to have $T_0 = \infty$.
\end{remark}

\begin{proposition} \label{prop-exist-min}
Problem~\eqref{mainpb} admits a global minimizer.
\end{proposition}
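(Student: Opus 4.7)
The plan is to apply the direct method of the calculus of variations, exploiting the fact that the constraint set $K := \{\bW \in \mathcal{W} : \| \bW\|_{\mathcal{W}}^2 \leq C\}$ is a closed ball in the finite-dimensional space $\mathcal{W}$, hence compact. By Theorem~\ref{th-exist0}, for each $\bW \in K$ there is a unique state $\mathrm{z}(\bW) = (v,w) \in X_{s,p}^T \times Y_{s,r}^T$, and since the Lipschitz bounds driving the local-existence time are dominated by $C_{\rho}^{\mathcal{L}_{\mathcal{W}}} \sup_{\bW \in K}\|\bW\|_{\mathcal{W}}^L$, the common lifespan $T_0 > 0$ can be chosen uniformly on $K$. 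We thus fix $T \in (0,T_0)$ throughout. The strategy is then to take a minimizing sequence $\{\bW_n\} \subset K$, pass to a subsequence such that $\bW_n \to \bW^*$ in $\mathcal{W}$, and prove strong convergence of the corresponding states $\mathrm{z}_n \to \mathrm{z}^*$ in $X_{s,p}^T \times Y_{s,r}^T$, where $\mathrm{z}^* = \mathrm{z}(\bW^*)$ is provided again by Theorem~\ref{th-exist0}. Continuity of $J$ then forces $\bW^*$ to be a minimizer.

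The core step is the continuity of the control-to-state map. The difference $(\delta v_n, \delta w_n) := \mathrm{z}_n - \mathrm{z}^*$ satisfies the linear system~\eqref{syslin} with zero initial data and right-hand sides of the form $F_v^{(n)} = -(\phi_v^{\bW_n}(\mathrm{z}_n) - \phi_v^{\bW^*}(\mathrm{z}^*))$ and similarly for $F_w^{(n)}$. I would split each RHS as
\begin{equation*}
\phi_v^{\bW_n}(\mathrm{z}_n) - \phi_v^{\bW^*}(\mathrm{z}^*) = \bigl(\phi_v^{\bW_n}(\mathrm{z}_n) - \phi_v^{\bW_n}(\mathrm{z}^*)\bigr) + \bigl(\phi_v^{\bW_n}(\mathrm{z}^*) - \phi_v^{\bW^*}(\mathrm{z}^*)\bigr).
\end{equation*}
The first parenthesis is controlled by Lemma~\ref{lemma-nem} in terms of $\|(\delta v_n, \delta w_n)\|_{X_{s,p}^T \times Y_{s,r}^T}$ with a prefactor behaving like $\max(T^{1/2}, T^{1/(2s)})$, uniformly in $n$ since $\|\bW_n\|_{\mathcal{W}} \leq \sqrt{C}$. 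The second parenthesis tends to $0$ in $\L^s(0,T;\WW^{-1,p}(\Omega)) \cap \L^s(0,T;\LL^r(\Omega))$: for fixed $\mathrm{z}^*$, the expression~\eqref{sysmain2} is, layer by layer, locally Lipschitz in the parameters $(A_\ell, b_\ell)$, and a direct induction combined with dominated convergence yields $\phi_v^{\bW_n}(\mathrm{z}^*) \to \phi_v^{\bW^*}(\mathrm{z}^*)$ in the required norms as $\bW_n \to \bW^*$.

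Applying Proposition~\ref{prop-lin} to $(\delta v_n, \delta w_n)$ on a first subinterval $[0,T_1]$, chosen so that the Lipschitz prefactor $C\, C_{\rho}^{\mathcal{L}_{\mathcal{W}}}\sup_n \|\bW_n\|_{\mathcal{W}}^L \max(T_1^{1/2}, T_1^{1/(2s)}) \leq 1/2$, I absorb the first term into the left-hand side and obtain
\begin{equation*}
\|(\delta v_n, \delta w_n)\|_{X_{s,p}^{T_1}\times Y_{s,r}^{T_1}} \leq 2C\,\varepsilon_n, \qquad \varepsilon_n \to 0.
\end{equation*}
Since $T_1$ depends only on the uniform bounds, the argument can be iterated on $[T_1, 2T_1], [2T_1, 3T_1], \ldots$, using $(v_n(kT_1), w_n(kT_1)) \to (v^*(kT_1), w^*(kT_1))$ in $\BB^{1-2/s}_{p,s}(\Omega) \times \LL^r(\Omega)$ from the previous step together with embedding~\eqref{super-embedding}, until $[0,T]$ is covered in finitely many steps. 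Strong convergence $\mathrm{z}_n \to \mathrm{z}^*$ in $X_{s,p}^T\times Y_{s,r}^T$ is the outcome.

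The main obstacle is handling the potentially large Lipschitz constant $C_{\rho}^{\mathcal{L}_{\mathcal{W}}}\|\bW\|_{\mathcal{W}}^L$ arising from the deep compositional structure of the neural network: one cannot absorb it on the full interval $[0,T]$ in a single shot. The constraint $\|\bW\|_{\mathcal{W}}^2 \leq C$ is precisely what rescues this, by making the bound uniform over the minimizing sequence and thus enabling the finite-step bootstrap. Once strong convergence of states is established, continuity of the Fr\'echet-differentiable functional $J$ gives $J(\mathrm{z}_n,\bW_n) \to J(\mathrm{z}^*, \bW^*) = \inf_K J$, completing the proof.
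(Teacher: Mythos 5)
Your proof is correct, but it follows a genuinely different route from the paper's. The paper runs the classical weak-compactness argument entirely in the state space: it extracts a weakly convergent subsequence $\mathrm{z}_n \rightharpoonup \hat{\mathrm{z}}$ in $X^T_{s,p}\times Y^T_{s,r}$, invokes weak lower semicontinuity of $J$, and then must identify $\hat{\mathrm{z}}$ as a weak solution for $\hat{\bW}$; this last step requires upgrading to strong convergence, which is done via Rellich--Kondrachov for the $v_n$ component and, for $w_n$, via a less standard detour -- deducing norm convergence $\|w_n\|_{\L^s(0,T;\LL^r)}\to\|\hat w\|_{\L^s(0,T;\LL^r)}$ from the specific structure of $J$ and then using uniform convexity of $\L^s(0,T;\LL^r(\Omega))$ to conclude $w_n\to\hat w$ strongly. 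You instead exploit that the control set is a compact ball in the finite-dimensional space $\mathcal{W}$ and prove outright continuity of the control-to-state map $\bW\mapsto\mathrm{z}(\bW)$, via the splitting $\Phi^{\bW_n}(\mathrm{z}_n)-\Phi^{\bW^*}(\mathrm{z}^*)=(\Phi^{\bW_n}(\mathrm{z}_n)-\Phi^{\bW_n}(\mathrm{z}^*))+(\Phi^{\bW_n}(\mathrm{z}^*)-\Phi^{\bW^*}(\mathrm{z}^*))$, absorption on a short interval, and a finite bootstrap; then plain continuity of $J$ finishes the job. Your approach buys several things: it needs only continuity (not weak lower semicontinuity) of $J$, it is insensitive to the particular form of $R$ and $R_T$ (the paper's $w_n$ argument leans on the $\|w-w_{\mathrm{data}}\|^s_{\LL^r}$ term appearing in $J$), and it establishes as a byproduct the stability of the solution map in the weights, which is in the same spirit as the paper's later Lemma~\ref{lemma-cvS} for the regularization parameter. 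The price is that you must justify the continuity of $\bW\mapsto\Phi^{\bW}(\zeta)$ with a linear-in-$|\zeta|$ local Lipschitz bound (your induction-plus-dominated-convergence sketch is adequate, since $\mathrm{z}^*\in\L^s(0,T;\LL^r(\Omega))$ dominates), and you must track the nonzero initial data $\delta\mathrm{z}_n(kT_1)$ through the bootstrap via the embedding~\eqref{super-embedding}, both of which you do correctly. The paper's method, by contrast, generalizes more readily to settings where the control set is merely weakly compact in an infinite-dimensional space.
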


\begin{proof}
The existence of feasible solutions is due to Theorem~\ref{th-exist}. Since $R$ is bounded from below, problem~\eqref{mainpb} admits a minimizing sequence $(\mathrm{z}_n, \mathbf{W}_n)_n = (v_n,w_n,\mathbf{W}_n)_n$ of feasible solutions. Since $\|\mathbf{W}_n\|_{\mathcal{W}}$ is bounded uniformly with respect to $n$, from~\eqref{estexistlocal} the sequence $\mathrm{z}_n$ is uniformly bounded as well, and admits a uniform positive time of existence. So, up to extraction, there exists $\hat{\mathrm{z}} \in X^T_{s,p} \times Y_{s,r}^T$ and $\hat{\mathbf{W}} \in \mathcal{W}$ such that
\begin{eqnarray*}
\mathrm{z}_n \rightharpoonup \hat{\mathrm{z}}=(\hat{v},\hat{w}) \text{ in } X^T_{s,p} \times Y^T_{s,r},
& &  \mathbf{W}_n \rightarrow \hat{\mathbf{W}} \text{ in } \mathcal{W}.
\end{eqnarray*}
The convergence of $(\mathbf{W}_n)_n$ is strong because $\mathcal{W}$ is finite-dimensional. Further, the bounds provided by estimate~\eqref{estexistlocal} for the norm of $\mathrm{z}_n$ in $X^T_{s,p} \times Y^T_{s,r}$, yield weak convergence (up to extraction) of
\begin{eqnarray}
& & \frac{\p v_n}{\p t} \text{ in } \L^s(0,T; \WW^{-1,p}(\Omega)), \quad
\nabla v_n \text{ in } \L^s(0,T; \LL^{p}(\Omega)), \quad \frac{\p w_n}{\p t} \text{ in } \L^s(0,T;\LL^r(\Omega)), \label{weakcvlin} \\
& &  (v_n(T),w_n(T)) \text{ in } \BB^{1-2/s}_{p,s}(\Omega) \times \LL^r(\Omega). \nonumber
\end{eqnarray}
Since, by the  Rellich-Kondrachov theorem,   the embedding $\WW^{1,p}(\Omega) \hookrightarrow \LL^{p^\ast}(\Omega)$ is compact with $p^\ast := pd/(d-p) > r $ (see~\eqref{ass-r2}), the sequence $v_n$ converges strongly in $\L^s(0,T; \LL^r(\Omega))$, towards $\hat{v}$. On the other hand,
the functional $J$ is weakly lower semi-continuous. 
We deduce
\begin{eqnarray}
 \lim_{n\rightarrow \infty} J(\mathrm{z}_n,\mathbf{W}_n) \geq
\liminf_{n\rightarrow \infty} J(\mathrm{z}_n,\mathbf{W}_n) \geq
J(\hat{\mathrm{z}},\hat{\mathbf{W}}). \label{great-ineqs}
\end{eqnarray}
This shows that $(\hat{\mathrm{z}},\hat{\mathbf{W}})$ minimizes the functional $J$. To verify that it is a solution of problem~\eqref{mainpb}, we have to show that it satisfies~\eqref{sysmain}-\eqref{sysmain2}. In view of the form of $J$, and the strong convergence of $v_n \rightarrow v$ and $\mathbf{W}_n \rightarrow \mathbf{W}$ in $\L^s(0,T;\LL^r(\Omega))$ and $\mathcal{W}$, respectively, from~\eqref{great-ineqs} it follows that\textcolor{black}{
\begin{eqnarray*}
\|w_n - w_{\mathrm{data}} \|_{\L^s(0,T;\LL^r(\Omega))} & \rightarrow & \| w - w_{\mathrm{data}} \|_{\L^s(0,T;\LL^r(\Omega))}.
\end{eqnarray*}}
Since the Banach space $\L^s(0,T;\LL^r(\Omega))$ is uniformly convex (this is a consequence of the Hanner's inequalities), from~\cite[Proposition~3.32 page~78]{Brezis}, this is sufficient for deducing that \textcolor{black}{$w_n-w_{\mathrm{data}} \rightarrow w-w_{\mathrm{data}}$ strongly in $\L^s(0,T;\LL^r(\Omega))$, and thus $w_n \rightarrow w$ strongly}. Then, with Lemma~\ref{lemma-nem}, we can pass to the limit in the nonlinear terms of~\eqref{sysmain}. \textcolor{black}{Considering the weak formulation of~\eqref{sysmain} given in Definition~\ref{def-weak}, this implies weak convergence of the nonlinear terms. Further, from~\eqref{weakcvlin} we also have weak convergence} for the linear terms of~\eqref{sysmain}, and therefore $(\hat{v},\hat{w})$ is a weak solution corresponding to~$\hat{\mathbf{W}}$. Thus the proof is complete.
\end{proof}


While we can prove existence of minimizers for problem~\eqref{mainpb}, the derivation of necessary optimal conditions requires a regularization of the activation function.

\section{Regularization for derivation of optimality conditions} \label{sec-reg}

\subsection{Regularization} \label{sec-assump}
Assuming that the activation functions $\rho$ are only Lipschitz is not sufficient for deriving optimality conditions with standard tools. For this purpose, we consider an approximation $\rho_{\varepsilon}$ of the activation function $\rho$, \textcolor{black}{that we assume to satisfy} the following set of assumptions:
\begin{description}
\item[$(\mathbf{A1})$] For all $\varepsilon>0$, the function $\rho_{\varepsilon}$ is of class $C^1$ on $\R$.

\item[$(\mathbf{A2})$] For all $\varepsilon>0$, we have $\|\rho'_{\varepsilon}\|_{\L^\infty(\R)} \leq \|\rho'\|_{\L^\infty(\R)}$.

\item[$(\mathbf{A2})'$] For all $\varepsilon>0$, we have $0 \leq \rho'_{\mathrm{min}} \leq \rho'_{\varepsilon} \leq \rho'_{\mathrm{max}}$,
where $\rho'_{\mathrm{min}} := \mathrm{ess\,inf}\, \rho'$ and $\rho'_{\mathrm{max}} := \mathrm{ess\,sup}\, \rho'$.

\item[$(\mathbf{A3})$] The sequence $(\rho_{\varepsilon})_{\varepsilon}$ converges uniformly to $\rho$ in $C(\R;\R)$.
\end{description}

Assumption~$(\mathbf{A2})$ implies in particular that the Lipschitz constant of functions $\rho'_{\varepsilon}$ is bounded uniformly with respect to $\varepsilon$. Note that~$(\mathbf{A2})'$ implies~$(\mathbf{A2})$, and will be used only in section~\ref{sec-final}.

\paragraph{Example: The ReLU activation function.}The so-called ReLU ({\it Rectified Linear Units}) activation function $\rho : x \mapsto \max(0,x)$ can be approximated by
\begin{linenomath}\begin{equation}
\rho_{\varepsilon}(x)  =  \left\{
\begin{array} {ll}
0 & \text{if } x \leq -\varepsilon, \\[5pt]
\displaystyle\frac{1}{4\varepsilon}\left(x+\varepsilon\right)^2 &
\text{if } -\varepsilon \leq x \leq \varepsilon, \\[5pt]
x & \text{if } x \geq \varepsilon.
\end{array} \right. \label{smoothReLU}
\end{equation}\end{linenomath}
\textcolor{black}{One} can verify that $\rho_{\varepsilon}$ is of class $C^1$ on $\R$, and Lipschitz with constant $\leq 1$ independent of $\varepsilon$, and $(\rho_{\varepsilon})_{\varepsilon}$ converges uniformly towards $\rho$ in $C(\R; \R)$.\\

We are now interested in the following abstract optimal control problem:
\begin{linenomath}\begin{equation} \label{mainpbeps} \tag{$\mathcal{P}_\varepsilon$}
\left\{ \begin{array} {l}
\displaystyle \min_{\mathbf{W} \in \mathcal{W}} \big(J(\mathrm{z},\mathbf{W}) =
R(\mathrm{z})+ R_T(\mathrm{z}) \big) ,\\[10pt]
\text{subject to $(\mathrm{z},\bW)$ satisfying~\eqref{sysmain}-\eqref{sysmain2} with $\rho_\varepsilon$ as activation function, and } \| \mathbf{W}\|^2_{\mathcal{W}} \leq C,
\end{array} \right.
\end{equation}\end{linenomath}
\textcolor{black}{where $R$ and $R_T$ are given as in~\eqref{choice-func}.}

\subsection{Linear systems} \label{sec-lin}

\textcolor{black}{From now on, the dependence of $\phi_v$ and $\phi_w$ with respect to the $\mathbf{W}$ will be made explicit in the notation.}
Let be $\overline{\mathrm{z}} = (\overline{v},\overline{w}) \in X^T_{s,p} \times Y^T_{s,r}$ and $\overline{\mathbf{W}} \in \mathcal{W}$. Consider now neural networks $\Phi^{(\varepsilon)} = \Big(\phi^{(\varepsilon)}_v,\phi^{(\varepsilon)}_w\Big)$ with the regularized $(\rho_{\varepsilon})$ as activation functions. We introduce the following linearized system:
\begin{eqnarray}
\left\{ \begin{array} {ll}
\displaystyle \frac{\p v}{\p t} - \nu\Delta v +
\frac{\p \phi^{(\varepsilon)}_v}{\p v}(\overline{v},\overline{w},\overline{\mathbf{W}}).v
+ \frac{\p \phi^{(\varepsilon)}_v}{\p w}(\overline{v},\overline{w},\overline{\mathbf{W}}).w = F_v &  \text{in } \Omega \times (0,T),\\[10pt]
\displaystyle \frac{\p w}{\p t} + \delta w +
\frac{\p \phi^{(\varepsilon)}_w}{\p v}(\overline{v},\overline{w},\overline{\mathbf{W}}).v
+ \frac{\p \phi^{(\varepsilon)}_w}{\p w}(\overline{v},\overline{w},\overline{\mathbf{W}}).w = F_w &  \text{in } \Omega \times (0,T),\\[10pt]
\displaystyle \frac{\p v}{\p n} = 0 & \text{on } \p \Omega \times (0,T), \\[10pt]
(v,w)(\, \cdot \, , 0) = (v_0,w_0) & \text{in } \Omega,
\end{array} \right. \label{syslin-approx}
\end{eqnarray}
where
\begin{linenomath}\begin{equation*}
(v_0,w_0) \in \BB^{1-2/s}_{p,s}(\Omega) \times \LL^r(\Omega), \qquad
(F_v, F_w) \in \L^s(0,T; \WW^{-1,p}(\Omega)) \times \L^s(0,T;\LL^r(\Omega)).
\end{equation*}\end{linenomath}
The following result shows the surjectivity of the underlying -- non-autonomous -- linear operator.

\begin{proposition} \label{propexistlin}
There exists $\tilde{T}_0 >0$ depending on $C_{\rho}$, $(v_0,w_0)$ and $(F_v,F_w)$, such that system~\eqref{syslin-approx} admits a unique solution $(v,w) \in X^T_{s,p} \times Y^T_{s,r}$ for all $T<\tilde{T}_0$. Moreover, there exists a constant $C_{\varepsilon}(\overline{v},\overline{w}, \overline{\bW})>0$ depending only on $C_{\rho_{\varepsilon}}$ and $\overline{\bW}$ such that
\begin{eqnarray}
\|v\|_{X^T_{s,p}} + \|w\|_{Y^T_{s,r}} & \leq &
C(\overline{v},\overline{w},\overline{\bW})
\left(\|v_0\|_{\BB_{p,s}^{1-2/s}(\Omega)} + \|w_0\|_{\LL^r(\Omega)} +
\|F_v\|_{\L^s(0,T;\WW^{-1,p}(\Omega))} + \| F_w\|_{\L^s(0,T;\LL^r(\Omega))} \right).\nonumber \\
\label{estlin-na}
\end{eqnarray}
The constant $C(\overline{v},\overline{w}, \overline{\bW})$ is non-decreasing with respect to $\| \overline{\mathbf{W}}\|_{\mathcal{W}}$, and does not depend on~$\varepsilon$.
\end{proposition}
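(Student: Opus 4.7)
The plan is to adapt the fixed-point strategy used in the proof of Proposition~\ref{prop-locexist}. The starting observation is that, by the gradient formula~\eqref{grad_nn} combined with assumption~$(\mathbf{A2})$, the four linearization coefficients
$$a_{ij}(x,t) := \frac{\partial \phi^{(\varepsilon)}_i}{\partial j}\bigl(\overline{v}(x,t),\overline{w}(x,t),\overline{\mathbf{W}}\bigr), \quad i,j\in\{v,w\},$$
are uniformly bounded in $\L^\infty(\Omega\times(0,T))$ by a quantity of the form $CC_\rho^{\mathcal{L}_{\mathcal{W}}}\|\overline{\mathbf{W}}\|_{\mathcal{W}}^{L}$, independent of $\varepsilon$ (by~$(\mathbf{A2})$) and of the base point $(\overline{v},\overline{w})$ at which the linearization is taken. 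System~\eqref{syslin-approx} is then read as a perturbation of the decoupled linear system~\eqref{syslin} by these bounded multiplicative terms.

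I would then define a mapping $\mathcal{K}:X^T_{s,p}\times Y^T_{s,r}\to X^T_{s,p}\times Y^T_{s,r}$ sending $(v_1,w_1)$ to the solution $(v_2,w_2)$ of~\eqref{syslin} with right-hand sides $F_v-a_{11}v_1-a_{12}w_1$ and $F_w-a_{21}v_1-a_{22}w_1$ and initial data $(v_0,w_0)$. Well-posedness of $\mathcal{K}$ reduces to checking that these right-hand sides lie in the correct spaces, which follows from the $\L^\infty$ bound on $a_{ij}$ combined with the embeddings $X^T_{s,p}\hookrightarrow\L^s(0,T;\LL^p(\Omega))$ and $Y^T_{s,r}\hookrightarrow\L^s(0,T;\LL^r(\Omega))$, together with $\LL^r(\Omega)\hookrightarrow\WW^{-1,p}(\Omega)$ recalled after~\eqref{ass-r2}. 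Proposition~\ref{prop-lin} then ensures that $(v_2,w_2)\in X^T_{s,p}\times Y^T_{s,r}$.

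The contraction step is the technical core. Given two inputs $(v_1,w_1)$ and $(v_1',w_1')$, their images under $\mathcal{K}$ differ by the solution of~\eqref{syslin} driven by $-a_{11}\delta v_1-a_{12}\delta w_1$ and $-a_{21}\delta v_1-a_{22}\delta w_1$ with zero initial data, so Proposition~\ref{prop-lin} reduces the estimate to controlling $\|a_{ij}\delta v\|_{\L^s(0,T;\WW^{-1,p}(\Omega))}$ and $\|a_{ij}\delta w\|_{\L^s(0,T;\LL^r(\Omega))}$ by a small factor times the $X^T_{s,p}\times Y^T_{s,r}$-norm of the differences. For the PDE contribution I would reuse the interpolation identity $\LL^p=[\WW^{-1,p};\WW^{1,p}]_{1/2}$ employed in Lemma~\ref{lemma-estlip1}, combined with Lemma~\ref{lemma-basic} applied to the vanishing initial value $\delta v(0)=0$, producing a factor of order $T^{1/2}$; for the ODE contribution Lemma~\ref{lemma-basic} applied directly in $\LL^r(\Omega)$ with zero initial data gives a factor of order $T$. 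For $T$ smaller than some $\tilde T_0$ depending only on $C_\rho$ and $\|\overline{\mathbf{W}}\|_{\mathcal{W}}$, $\mathcal{K}$ is therefore a strict contraction on $X^T_{s,p}\times Y^T_{s,r}$; the Banach fixed-point theorem yields the unique solution, and estimate~\eqref{estlin-na} is obtained by applying~\eqref{est-lin} to that fixed point.

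The main subtlety I anticipate is preventing the contraction constant, and hence $\tilde T_0$, from inheriting hidden dependence on $\overline{v},\overline{w}$ or on $\varepsilon$. This is precisely the role of assumption~$(\mathbf{A2})$: only the $\L^\infty$ bound on $\rho'_\varepsilon$, and not its pointwise values, enters the coefficient bounds, so the argument is insensitive to the linearization base point and yields an $\varepsilon$-independent final constant, non-decreasing in $\|\overline{\mathbf{W}}\|_{\mathcal{W}}$, as asserted.
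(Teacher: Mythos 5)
Your proposal is correct and follows essentially the same route as the paper: the paper observes that $\mathrm{z}\mapsto \nabla_{\mathrm{z}}\Phi^{(\varepsilon)}(\overline{\mathrm{z}},\overline{\mathbf{W}}).\mathrm{z}$ is a Lipschitz map whose constant is bounded by $\|\overline{\mathbf{W}}\|_{\mathcal{W}}^{L}C_{\rho}^{\mathcal{L}_{\mathcal{W}}}$ uniformly in $\varepsilon$ and in the base point (via~$(\mathbf{A2})$ and~\eqref{resLip}), and then simply re-runs the fixed-point argument of Proposition~\ref{prop-locexist}. Your write-up just unfolds that fixed-point argument explicitly (the map $\mathcal{K}$, the $T^{1/2}$ and $T$ contraction factors from Lemmas~\ref{lemma-basic} and~\ref{lemma-estlip1}), which is exactly what the paper's one-line reference to Proposition~\ref{prop-locexist} encodes.
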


\begin{proof}
The operator $\mu : X^T_{s,p} \times Y^T_{s,r} \ni \mathrm{z} \mapsto \nabla_{\mathrm{z}} \Phi^{(\varepsilon)}(\overline{\mathrm{z}},\overline{\mathbf{W}}).\mathrm{z}$ fulfills the assumptions of Proposition~\ref{prop-locexist} for each $(\overline{\mathrm{z}},\overline{\mathbf{W}})$. From~\eqref{resLip} and~$(\mathbf{A2})$, its Lipschitz constant is controlled uniformly with respect to~$\varepsilon$ as follows
\begin{linenomath}\begin{equation}
\left\| \nabla_{\mathrm{z}} \Phi^{(\varepsilon)}(\overline{\mathrm{z}},\overline{W}) \right\|_{\mathscr{L}\left(X^T_{s,p}\times Y^T_{s,r};\L^s(0,T;\WW^{-1,p}(\Omega))\times \L^s(0,T;\LL^r(\Omega))\right)} \leq
\|\bW\|_{\mathcal{W}}^L \| \rho_{\varepsilon}'\|_{\L^{\infty}(\R)}^{\mathcal{L}_{\mathcal{W}}} \leq
 \|\bW\|_{\mathcal{W}}^L C_{\rho}^{\mathcal{L}_{\mathcal{W}}}.
\label{est-gradphi}
\end{equation}\end{linenomath}
Therefore the proof of Proposition~\ref{prop-locexist} can be repeated with such a $\mu$, and the result follows straightforwardly. The lifespan $\tilde{T}_0$ depends decreasingly on the norm of $\mathrm{z} \mapsto \nabla_{\mathrm{z}} \Phi^{(\varepsilon)}(\overline{\mathrm{z}},\overline{W})\mathrm{z}$. Thus $\tilde{T}_0$ depends only on $C_{\rho}$, $\overline{\bW}$, $(v_0,w_0)$ and $(F_v,F_w)$.
\end{proof}

\begin{remark} \label{rk-important}
The lifespan $\tilde{T}_0$ of solutions of system~\eqref{syslin-approx} is conditioned by the one of $\overline{z} = (\overline{v},\overline{w})$, which will be assumed to satisfy~\eqref{sysmain}-\eqref{sysmain2} with $\overline{\bW}$ as data. Thus, necessarily, this lifespan $\tilde{T}_0$ is smaller than or equal to the $T_0$ obtained in Theorem~\ref{th-exist0}, and consequently in what follows we will consider $T\leq \tilde{T}_0 \leq T_0$. Besides, $\tilde{T}_0$ depends on $\overline{\bW}$ basically, but in view of the constraint $\|\overline{\bW}\|^2 \leq C$ in problem~\eqref{mainpbeps}, the lifespan $\tilde{T}_0$ will actually depends only on $C_{\rho}$, $(v_0,w_0)$ and $(F_v,F_w)$.
\end{remark}

Let us introduce the adjoint system, with $\mathrm{p} = (p_v,p_w)$ as unknowns, associated with the regularized problem:
\begin{eqnarray}
\left\{ \begin{array} {ll}
\displaystyle -\frac{\p p_v}{\p t} - \nu \Delta p_v +
\frac{\p \phi_v^{\text{\textcolor{black}{$(\varepsilon)$}}}}{\p v}(\overline{v},\overline{w},\overline{\mathbf{W}})^{\ast}.p_v
+ \frac{\p \phi_v^{\text{\textcolor{black}{$(\varepsilon)$}}}}{\p w}(\overline{v},\overline{w},\overline{\mathbf{W}})^{\ast}.p_w =
 g_v &  \text{in } \Omega \times (0,T),\\[10pt]
\displaystyle -\frac{\p p_w}{\p t} + \delta p_w +
\frac{\p \phi_w^{\text{\textcolor{black}{$(\varepsilon)$}}}}{\p v}(\overline{v},\overline{w},\overline{\mathbf{W}})^{\ast}.p_v
+ \frac{\p \phi_w^{\text{\textcolor{black}{$(\varepsilon)$}}}}{\p w}(\overline{v},\overline{w},\overline{\mathbf{W}})^{\ast}.p_w =
 g_w &  \text{in } \Omega \times (0,T),\\[10pt]
\displaystyle \frac{\p p_v}{\p n} = 0 & \text{on } \p \Omega \times (0,T), \\[10pt]
(p_v,p_w)(\, \cdot \, , T) =  (p_{v,T},p_{w,T}) & \text{in } \Omega.
\end{array} \right. \label{sysadj-approx}
\end{eqnarray}
\textcolor{black}{From~\eqref{choice-func}, the right-hand-sides of this system correspond to}
\begin{linenomath}\begin{equation}
g_v = -\frac{\p R}{\p v}(\overline{v},\overline{w}), \quad
g_w = -\frac{\p R}{\p w}(\overline{v},\overline{w}), \quad
p_{v,T} = -\frac{\p R_T}{\p v}(\overline{v},\overline{w}), \quad
p_{w,T} = -\frac{\p R_T}{\p w}(\overline{v},\overline{w}). \label{good-choice}
\end{equation}\end{linenomath}
For $0<T\leq \tilde{T}_0$, we define a solution of system~\eqref{sysadj-approx} by transposition.
\begin{definition} \label{defsoladj}
Let $(p_{v,T},p_{w,T}) \in \left(\BB^{1-2/s}_{p,s}(\Omega)\right)' \times \LL^{r'}(\Omega)$ and $(g_v, g_w) \in \L^{s'}(0,T;\WW^{-1,p'}(\Omega)) \times \L^{s'}(0,T;\LL^{r'}(\Omega))$. We say that $(p_v,p_w) \in \L^{s'}(0,T;\WW^{1,p'}(\Omega)) \times \L^{s'}(0,T;\L^{r'}(\Omega))$ is solution of system~\eqref{sysadj-approx} if for all $(f_v,f_w) \in \L^s(0,T;\WW^{-1,p}(\Omega))\times \L^s(0,T;\L^r(\Omega))$ we have
\begin{eqnarray}
& & \int_0^T \left\langle  f_v;p_v\right\rangle_{\WW^{-1,p}(\Omega);\WW^{1,p'}(\Omega)}\d t
+ \int_0^T \left\langle  f_w;p_w\right\rangle_{\LL^{r'}(\Omega);\LL^{r}(\Omega)}\d t
\nonumber \\
& &  =  \left\langle p_{v,T} ; \varphi_v(T) \right\rangle_{\left(\BB^{1-2/s}_{p,s}(\Omega)\right)';\BB^{1-2/s}_{p,s}(\Omega)}
+ \left\langle p_{w,T} ; \varphi_w(T) \right\rangle_{\LL^{r'}(\Omega);\LL^{r}(\Omega)} \nonumber \\
& &\quad  + \int_0^T \left\langle  g_v;\varphi_v\right\rangle_{\WW^{-1,p'}(\Omega);\WW^{1,p}(\Omega)}\d t
+ \int_0^T \left\langle  g_w;\varphi_w\right\rangle_{\LL^{r'}(\Omega);\LL^{r}(\Omega)}\d t,
\label{eqdeftrans}
\end{eqnarray}
where $(\varphi_v,\varphi_w)$ is the solution of~\eqref{syslin-approx} with $(f_v,f_w)$ as data, namely the following linear system
\begin{eqnarray}
\left\{ \begin{array} {ll}
\displaystyle \frac{\p \varphi_v}{\p t} - \nu\Delta \varphi_v +
\frac{\p \phi_v^{\text{\textcolor{black}{$(\varepsilon)$}}}}{\p v}(\overline{v},\overline{w},\overline{\mathbf{W}}).\varphi_v
+ \frac{\p \phi_v^{\text{\textcolor{black}{$(\varepsilon)$}}}}{\p w}(\overline{v},\overline{w},\overline{\mathbf{W}}).\varphi_w = f_v &  \text{in } \Omega \times (0,T),\\[10pt]
\displaystyle \frac{\p \varphi_w}{\p t} + \delta \varphi_w +
\frac{\p \phi_w^{\text{\textcolor{black}{$(\varepsilon)$}}}}{\p v}(\overline{v},\overline{w},\overline{\mathbf{W}}).\varphi_v
+ \frac{\p \phi_w^{\text{\textcolor{black}{$(\varepsilon)$}}}}{\p w}(\overline{v},\overline{w},\overline{\mathbf{W}}).\varphi_w = f_w &  \text{in } \Omega \times (0,T),\\[10pt]
\displaystyle \frac{\p \varphi_v}{\p n} = 0 & \text{on } \p \Omega \times (0,T), \\[10pt]
(\varphi_v,\varphi_w)(\, \cdot \, , 0) = (0,0) & \text{in } \Omega.
\end{array} \right. \label{syslintrans}
\end{eqnarray}
\end{definition}
Note that the existence of a solution $(\varphi_v,\varphi_w)$ to system~\eqref{syslintrans} is guaranteed by Proposition~\ref{propexistlin}. We state now an existence result for the adjoint system~\eqref{sysadj-approx}.

\begin{proposition}
Let be $T\in (0, \tilde{T}_0)$. For all
\begin{eqnarray*}
 (p_{v,T},p_{w,T}) \in \left(\BB^{1-2/s}_{p,s}(\Omega)\right)' \times \LL^{r'}(\Omega) & &
(g_v,g_w) \in \L^{s'}(0,T;\WW^{-1,p'}(\Omega)) \times \L^{s'}(0,T;\LL^{r'}(\Omega)),
\end{eqnarray*}
system~\eqref{sysadj-approx} admits a unique solution $(p_v,p_w)$ in the sense of Definition~\ref{defsoladj}, satisfying
\begin{eqnarray}
\|p_v\|_{\L^{s'}(0,T;\WW^{1,p'}(\Omega))} + \|p_w \|_{\L^{s'}(0,T;\LL^{r'}(\Omega))}
& \leq & C(\overline{v},\overline{w}, \overline{\bW}) \left(\|p_{v,T} \|_{\left(\BB^{1-2/s}_{p,s}(\Omega)\right)'}
+ \|p_{w,T}\|_{\LL^{r'}(\Omega)}\right. \nonumber \\
& & \left.
\qquad + \| g_v \|_{\L^{s'}(0,T;\WW^{-1,p'}(\Omega))}
+ \| g_w \|_{\L^{s'}(0,T;\LL^{r'}(\Omega))}
\right), \label{estimate-adjoint}
\end{eqnarray}
where the constant $C(\overline{v},\overline{w}, \overline{\bW}) > 0$ is the one which appears in~\eqref{estlin-na}.
\end{proposition}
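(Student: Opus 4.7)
The plan is to proceed by the classical transposition method. The key observation is that Proposition~\ref{propexistlin} supplies a continuous linear forward solution operator
\[
\mathcal{S}: \L^s(0,T;\WW^{-1,p}(\Omega)) \times \L^s(0,T;\LL^r(\Omega)) \longrightarrow X_{s,p}^T \times Y_{s,r}^T,
\]
that maps the pair of data $(f_v,f_w)$ to the unique solution $(\varphi_v,\varphi_w)$ of~\eqref{syslintrans} (with zero initial conditions), with operator norm bounded by the constant $C(\overline{v},\overline{w},\overline{\bW})$ of~\eqref{estlin-na}. The whole argument will be built on this operator.

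First, I would define the linear functional $L$ on the data space by letting $L(f_v,f_w)$ be the right-hand side of identity~\eqref{eqdeftrans}, evaluated at $(\varphi_v,\varphi_w) = \mathcal{S}(f_v,f_w)$. The two terminal pairings are well-defined thanks to the continuous embedding~\eqref{super-embedding}, which gives $\varphi_v(T) \in \BB^{1-2/s}_{p,s}(\Omega)$ and $\varphi_w(T) \in \LL^r(\Omega)$; the two integral pairings are handled by Hölder's inequality in time. Summing the four contributions and invoking~\eqref{estlin-na} produces the estimate
\[
|L(f_v,f_w)| \;\leq\; C(\overline{v},\overline{w},\overline{\bW}) \,\bigl(\|p_{v,T}\|_{(\BB^{1-2/s}_{p,s})'} + \|p_{w,T}\|_{\LL^{r'}} + \|g_v\|_{\L^{s'}(\WW^{-1,p'})} + \|g_w\|_{\L^{s'}(\LL^{r'})}\bigr)\,\bigl(\|f_v\|_{\L^s(\WW^{-1,p})} + \|f_w\|_{\L^s(\LL^r)}\bigr),
\]
showing that $L$ is a continuous linear form on the data space.

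Next, I would identify its dual: since $1<p,r<\infty$, the spaces $\WW^{-1,p}(\Omega)$ and $\LL^r(\Omega)$ are reflexive, and since $s>1$, the standard Bochner duality identifies
\[
\bigl(\L^s(0,T;\WW^{-1,p}(\Omega)) \times \L^s(0,T;\LL^r(\Omega))\bigr)' \;\simeq\; \L^{s'}(0,T;\WW^{1,p'}(\Omega)) \times \L^{s'}(0,T;\LL^{r'}(\Omega)).
\]
The Riesz representation theorem then yields a unique $(p_v,p_w)$ in this dual space such that
\[
L(f_v,f_w) = \int_0^T \langle f_v;p_v\rangle_{\WW^{-1,p};\WW^{1,p'}}\,\d t + \int_0^T \langle f_w;p_w\rangle_{\LL^{r'};\LL^r}\,\d t
\]
for all $(f_v,f_w)$, which is precisely~\eqref{eqdeftrans}. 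The norm identity $\|(p_v,p_w)\| = \|L\|$ delivers estimate~\eqref{estimate-adjoint} with the same constant $C(\overline{v},\overline{w},\overline{\bW})$ as in~\eqref{estlin-na}. Uniqueness is automatic: if $(p_v,p_w)$ represents the zero functional, it must vanish in the dual space. I do not foresee a genuine obstacle — the only nontrivial input is the uniform bound on $\mathcal{S}$ provided by Proposition~\ref{propexistlin}, and everything else is standard duality.
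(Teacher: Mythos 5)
Your proposal is correct and follows essentially the same route as the paper: the paper takes the adjoint $\Lambda^*$ of the bounded forward solution operator $\Lambda:(f_v,f_w)\mapsto\big((\varphi_v,\varphi_w),(\varphi_v(T),\varphi_w(T))\big)$ and applies it to $\big((g_v,g_w),(p_{v,T},p_{w,T})\big)$, which is exactly your construction of representing the continuous functional $L = \langle(g_v,g_w),(p_{v,T},p_{w,T})\rangle\circ\Lambda$ via Bochner-space duality, with the same continuity constant and the same uniqueness argument.
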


\begin{proof}
Denote by $\Lambda(\overline{v},\overline{w},\overline{\mathbf{W}})$ the mapping defined by
\begin{eqnarray*}
\Lambda(\overline{v},\overline{w},\overline{\mathbf{W}}):(f_v,f_w) & \mapsto  &
\big( (\varphi_v, \varphi_w), (\varphi_v(T),\varphi_w(T)) \big),
\end{eqnarray*}
where $(\varphi_v,\varphi_w)$ is the solution of system~\eqref{syslintrans}. From Proposition~\ref{propexistlin}, the operator $\Lambda(\overline{v},\overline{w},\overline{\mathbf{W}})$ is bounded from $\L^s(0,T;\WW^{-1,p}(\Omega))\times \L^s(0,T;\L^r(\Omega))$ into $\L^s(0,T; \WW^{1,p}(\Omega)) \times \L^s(0,T;\LL^r(\Omega)) \times \BB^{1-2/s}_{p,s}(\Omega) \times \LL^{r}(\Omega)$. So the adjoint operator $\Lambda(\overline{v},\overline{w},\overline{\mathbf{W}})^\ast$ is bounded from $\L^{s'}(0,T; \WW^{-1,p'}(\Omega)) \times \L^{s'}(0,T;\LL^{r'}(\Omega)) \times\left(\BB^{1-2/s}_{p,s}(\Omega)\right)' \times \LL^{r'}(\Omega)$ into $\L^{s'}(0,T;\WW^{1,p'}(\Omega))\times \L^{s'}(0,T;\L^{r'}(\Omega))$, with the same continuity constant (due to the Hahn-Banach theorem), namely the one which appears in estimate~\eqref{estlin-na}. Now by setting $(p_v,p_w) = \Lambda(\overline{v},\overline{w},\overline{\bW})^\ast\big((g_v,g_w),(p_{v,T},p_{w,T})\big)$, we can verify that $(p_v,p_w) \in \L^{s'}(0,T;\WW^{1,p'}(\Omega))\times \L^{s'}(0,T;\L^{r'}(\Omega))$ satisfies system~\eqref{sysadj-approx}. Uniqueness is due to the fact that for $(g_v,g_w) = (0,0)$ and $(p_{v,T},p_{w,T})=(0,0)$, the corresponding solution $(p_v,p_w)$ of~\eqref{sysadj-approx} in the sense of Definition~\ref{defsoladj} satisfies~\eqref{eqdeftrans}, from which we deduce $(p_v,p_w) = (0,0)$ in $\L^{s'}(0,T;\WW^{1,p'}(\Omega))\times \L^{s'}(0,T;\L^{r'}(\Omega))$.
\end{proof}

\subsection{On the control-to-state mapping for the regularized problem}

For $0< T < \tilde{T}_0$, we define the control-to-state mapping for problem~\ref{mainpbeps} as
\begin{linenomath}\begin{equation*}
\begin{array} {rrcl}
 \mathbb{S}_{\varepsilon} : & \mathcal{W} & \rightarrow & X^T_{s,p} \times Y^T_{s,r} \\
& \mathbf{W} & \mapsto & \mathrm{z}
\end{array}
\end{equation*}\end{linenomath}
where $\mathrm{z} = (v,w)$ is the solution of~\eqref{sysmain}-\eqref{sysmain2} with $\rho_{\varepsilon}$ as activation function. From Theorem~\ref{th-exist0}, the mapping $\mathbb{S}_{\varepsilon}$ is well-defined. Furthermore, we have:

\begin{proposition}
The mapping $\mathbb{S}_{\varepsilon}$ is of class $C^1$ on $\mathcal{W}$.
\end{proposition}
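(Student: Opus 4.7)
The natural approach is to invoke the implicit function theorem. Define the mapping
\begin{equation*}
\mathcal{F}: \mathcal{W} \times \big(X^T_{s,p} \times Y^T_{s,r}\big) \longrightarrow \L^s(0,T;\WW^{-1,p}(\Omega)) \times \L^s(0,T;\LL^r(\Omega)) \times \BB^{1-2/s}_{p,s}(\Omega) \times \LL^r(\Omega)
\end{equation*}
by
\begin{equation*}
\mathcal{F}(\bW,(v,w)) = \Big( \tfrac{\p v}{\p t} - \nu\Delta v + \phi_v^{(\varepsilon)}(v,w,\bW) - f_v,\ \tfrac{\p w}{\p t}+\delta w + \phi_w^{(\varepsilon)}(v,w,\bW) - f_w,\ v(0)-v_0,\ w(0)-w_0 \Big),
\end{equation*}
so that $\mathbb{S}_{\varepsilon}(\bW) = \mathrm{z}$ iff $\mathcal{F}(\bW,\mathrm{z})=0$. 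The plan is: (i) show $\mathcal{F}$ is $C^1$; (ii) show $\p_{\mathrm{z}}\mathcal{F}(\bW,\mathbb{S}_{\varepsilon}(\bW))$ is an isomorphism; (iii) conclude by the implicit function theorem.

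For step (i), the linear parts $\p_t - \nu\Delta$ and $\p_t + \delta\I$ together with the trace-at-zero are bounded linear and hence smooth. The whole difficulty lies in the Nemytskii term $(v,w,\bW) \mapsto \Phi^{(\varepsilon)}(v,w,\bW)$. Since $\rho_{\varepsilon} \in C^1(\R)$ with $\rho_{\varepsilon}'$ bounded (by $(\mathbf{A2})$) and globally Lipschitz (uniformly in $\varepsilon$), each composition layer in~\eqref{sysmain2} defines, by formulas~\eqref{grad_nn}--\eqref{formula_phiW}, a Fr\'echet differentiable Nemytskii operator from $X^T_{s,p}\times Y^T_{s,r}$ into $\L^s(0,T;\LL^r(\Omega))$. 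Continuity of the derivatives follows from dominated convergence combined with the Lipschitz estimates of Lemma~\ref{lemma-estlip1} and Lemma~\ref{lemma-estlip2}, using that $\LL^r \hookrightarrow \WW^{-1,p}$. Smoothness in the affine parameters $\bW$ is immediate as the $W_\ell$ enter affinely through $A_\ell \mathrm{z}+b_\ell$, and finite-dimensionality of $\mathcal{W}$ removes any subtlety there.

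For step (ii), at a fixed $\bW$ and $\overline{\mathrm{z}} = \mathbb{S}_{\varepsilon}(\bW)$, the partial derivative $\p_{\mathrm{z}}\mathcal{F}(\bW,\overline{\mathrm{z}})$ applied to $(v,w)$ is exactly the left-hand side operator of the linearized system~\eqref{syslin-approx} together with the initial-value trace. Proposition~\ref{propexistlin} shows precisely that this operator is a bijection from $X^T_{s,p}\times Y^T_{s,r}$ onto $\L^s(0,T;\WW^{-1,p}(\Omega)) \times \L^s(0,T;\LL^r(\Omega)) \times \BB^{1-2/s}_{p,s}(\Omega) \times \LL^r(\Omega)$, with a continuous inverse whose norm is controlled by $C(\overline{v},\overline{w},\overline{\bW})$. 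Thus $\p_{\mathrm{z}}\mathcal{F}(\bW,\overline{\mathrm{z}})$ is a topological isomorphism.

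The main obstacle is step (i), specifically the proof that $\bW \mapsto \nabla_{\mathrm{z}}\Phi^{(\varepsilon)}(\mathrm{z},\bW)$ varies continuously in $\bW$ in the operator norm from $X^T_{s,p}\times Y^T_{s,r}$ to $\L^s(0,T;\WW^{-1,p}(\Omega))\times \L^s(0,T;\LL^r(\Omega))$; this requires unwinding the product formula~\eqref{grad_nn}, using the uniform bound $\|\rho_{\varepsilon}'\|_{\infty} \leq \|\rho'\|_{\infty}$ and the uniform Lipschitz continuity of $\rho_{\varepsilon}'$ to propagate continuity through the $L$ nested compositions. Once this continuity is in hand, the $C^1$ regularity of $\mathcal{F}$ is established, and the implicit function theorem yields $\mathbb{S}_{\varepsilon} \in C^1(\mathcal{W}; X^T_{s,p}\times Y^T_{s,r})$ with derivative $\mathrm{d}\mathbb{S}_{\varepsilon}(\bW) = -\big(\p_{\mathrm{z}}\mathcal{F}(\bW,\mathbb{S}_{\varepsilon}(\bW))\big)^{-1}\p_{\bW}\mathcal{F}(\bW,\mathbb{S}_{\varepsilon}(\bW))$, i.e.\ the sensitivity $\mathrm{d}\mathbb{S}_{\varepsilon}(\bW).\tilde{\bW}$ is the solution of system~\eqref{syslin-approx} with right-hand side $-\nabla_{\bW}\Phi^{(\varepsilon)}(\mathbb{S}_{\varepsilon}(\bW),\bW).\tilde{\bW}$ and zero initial data.
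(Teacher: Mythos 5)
Your proposal follows exactly the paper's own route: define the constraint mapping (the paper calls it $e$, you call it $\mathcal{F}$), observe that it is $C^1$ thanks to the regularity of $\rho_{\varepsilon}$, use Proposition~\ref{propexistlin} to see that the partial derivative in $\mathrm{z}$ is invertible, and conclude with the implicit function theorem. Your write-up is in fact more explicit than the paper's (which simply asserts the $C^1$ regularity of the constraint map and the surjectivity of its linearization), but the argument is the same.
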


\begin{proof}
Define the mapping
\begin{eqnarray}
\begin{array} {rrcl}
e: & \left(X^T_{s,p} \times Y^T_{s,r} \right)\times  \mathcal{W} & \rightarrow &
\L^s(0,T; \WW^{-1,p}(\Omega)) \times \L^s(0,T;\LL^r(\Omega)) \times \BB^{1-2/s}_{p,s}(\Omega) \times \LL^r(\Omega) \\
 & (\mathrm{z} = (v,w), \mathbf{W}) & \mapsto & \left(\begin{array} {c}
\displaystyle \frac{\p v}{\p t} - \nu \Delta v + \phi^{(\varepsilon)}_v(v,w) - f_v \\[10pt]
\displaystyle \frac{\p w}{\p t} + \delta w + \phi^{(\varepsilon)}_w(v,w) - f_w \\
v(\cdot, 0) - v_0 \\
w(\cdot, 0) -w_0
\end{array}\right).
\end{array} \label{eq-constraint}
\end{eqnarray}
Note that $e(\mathbb{S}_{\varepsilon}(\mathbf{W}), \mathbf{W}) = 0$ for all $\mathbf{W} \in \mathcal{W}$. Besides, due to the regularity of activation functions $\rho_{\varepsilon}$, the mapping $e$ is of class $C^1$, and from Proposition~\ref{propexistlin} the linear mapping $e_{\mathrm{z}}((\overline{v},\overline{w}), \overline{\mathbf{W}})$ is surjective for all $(\overline{v},\overline{w}, \overline{\mathbf{W}}) \in X^T_{s,p} \times Y^T_{s,r} \times  \mathcal{W}$. Then the result follows from the implicit function theorem.
\end{proof}

\subsection{Necessary optimality conditions for the regularized problem}

A solution to problem~\eqref{mainpbeps} can be seen as a saddle-point of the following Lagrangian functional:
\begin{eqnarray}
\mathscr{L}(\mathbf{W}, \lambda) & = & J(\mathbb{S}_{\varepsilon}(\mathbf{W}), \mathbf{W}) - \lambda\left(\|\mathbf{W}\|^2_{\mathcal{W}} - C \right).
\end{eqnarray}

\begin{proposition} \label{propopteps}
Let $\overline{\mathbf{W}}$ be an optimal solution of~\eqref{mainpbeps}. Then there exists $\overline{\lambda } \geq 0$ such that for all $1 \leq \ell \leq L $
\begin{eqnarray}
\int_0^T \int_\Omega
\frac{\p \Phi^{(\varepsilon)}}{\p W_\ell}(\overline{\mathrm{z}},\overline{\mathbf{W}})^\ast \overline{\mathrm{p}} \, \d \Omega  - 2\overline{\lambda} \, \overline{W}_{\ell} =0, \label{optcondeps1}\\
\overline{\lambda}\left(\|\overline{\mathbf{W}}\|^2_{\mathcal{W}} - C\right) = 0,\label{optcondeps2}
\end{eqnarray}
where $\overline{\mathrm{z}}$ is solution of~\eqref{sysmain}-\eqref{sysmain2} with $\overline{\mathbf{W}}$ as data, and $\overline{\mathrm{p}}$ solution of~\eqref{sysadj-approx}-\eqref{good-choice}, with $\overline{\mathrm{z}} = (\overline{v},\overline{w})$ and $\overline{\mathbf{W}}$ as data.
\end{proposition}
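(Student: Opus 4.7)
The idea is to apply the Karush--Kuhn--Tucker theorem in the finite-dimensional space $\mathcal{W}$ to the reduced cost $\hat{J}_\varepsilon(\mathbf{W}) := J(\mathbb{S}_\varepsilon(\mathbf{W}), \mathbf{W})$, then rewrite the differential of $\hat{J}_\varepsilon$ at $\overline{\mathbf{W}}$ by means of the adjoint state $\overline{\mathrm{p}}$. The $C^1$-regularity of $\mathbb{S}_\varepsilon$ (previous proposition) and of $J$ (assumption) imply that $\hat{J}_\varepsilon$ is of class $C^1$ on $\mathcal{W}$. The constraint $g(\mathbf{W}) := \|\mathbf{W}\|^2_{\mathcal{W}} - C$ has gradient $2\mathbf{W}$, which vanishes only at $\mathbf{W}=0$, where $g(\mathbf{W}) = -C < 0$ so the constraint is inactive (one then takes $\overline{\lambda}=0$). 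Hence the KKT conditions apply at $\overline{\mathbf{W}}$ and yield $\overline{\lambda}\geq 0$ such that $\hat{J}'_\varepsilon(\overline{\mathbf{W}}) = 2\overline{\lambda}\,\overline{\mathbf{W}}$ in $\mathcal{W}$, together with the complementarity~\eqref{optcondeps2}.

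The main step is then to express $\hat{J}'_\varepsilon(\overline{\mathbf{W}}) \cdot \tilde{\mathbf{W}}$, for an arbitrary direction $\tilde{\mathbf{W}} \in \mathcal{W}$, through the adjoint. Differentiating the constraint equation $e(\mathbb{S}_\varepsilon(\mathbf{W}),\mathbf{W})=0$ (with $e$ as in~\eqref{eq-constraint}) at $(\overline{\mathrm{z}},\overline{\mathbf{W}})$ gives $e_{\mathrm{z}}(\overline{\mathrm{z}},\overline{\mathbf{W}})\tilde{\mathrm{z}} + e_{\mathbf{W}}(\overline{\mathrm{z}},\overline{\mathbf{W}})\tilde{\mathbf{W}} = 0$, which identifies the sensitivity $\tilde{\mathrm{z}} = \mathbb{S}'_\varepsilon(\overline{\mathbf{W}})\tilde{\mathbf{W}}$ as the solution of the linearized system~\eqref{syslintrans} with zero initial data and source terms
\begin{equation*}
f_v = -\frac{\p \phi^{(\varepsilon)}_v}{\p \mathbf{W}}(\overline{\mathrm{z}}, \overline{\mathbf{W}})\tilde{\mathbf{W}}, \qquad f_w = -\frac{\p \phi^{(\varepsilon)}_w}{\p \mathbf{W}}(\overline{\mathrm{z}}, \overline{\mathbf{W}})\tilde{\mathbf{W}}.
\end{equation*}
By the chain rule, $\hat{J}'_\varepsilon(\overline{\mathbf{W}}) \cdot \tilde{\mathbf{W}} = \langle R'(\overline{\mathrm{z}}), \tilde{\mathrm{z}}\rangle + \langle R_T'(\overline{\mathrm{z}}), \tilde{\mathrm{z}}(T)\rangle$, where the pairings split componentwise in $(\tilde v,\tilde w)$.

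Now I would test the transposition identity~\eqref{eqdeftrans}, satisfied by the adjoint state $\overline{\mathrm{p}} = (\overline{p}_v, \overline{p}_w)$ (solution of~\eqref{sysadj-approx} with the data~\eqref{good-choice}), against this particular $(\varphi_v, \varphi_w) = (\tilde v, \tilde w)$: by the choice~\eqref{good-choice}, the right-hand side of~\eqref{eqdeftrans} equals, up to a sign, exactly $\hat{J}'_\varepsilon(\overline{\mathbf{W}}) \cdot \tilde{\mathbf{W}}$, while the left-hand side becomes
\begin{equation*}
-\int_0^T \int_\Omega \left(\frac{\p \Phi^{(\varepsilon)}}{\p \mathbf{W}}(\overline{\mathrm{z}},\overline{\mathbf{W}})^{\ast}\overline{\mathrm{p}}\right) \cdot \tilde{\mathbf{W}} \, \d \Omega \, \d t
\end{equation*}
after reading off the duality pairing with $\tilde{\mathbf{W}}$. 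Specializing $\tilde{\mathbf{W}}$ to an arbitrary variation along the single component $W_\ell$, combining with the KKT stationarity $\hat{J}'_\varepsilon(\overline{\mathbf{W}}) = 2\overline{\lambda}\,\overline{\mathbf{W}}$ and matching signs yields~\eqref{optcondeps1}; \eqref{optcondeps2} is the complementarity condition from KKT.

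The main obstacle is a careful book-keeping of function spaces so that all duality pairings are well-defined. Concretely, the variations $\partial \Phi^{(\varepsilon)}/\partial \mathbf{W} \cdot \tilde{\mathbf{W}}$ must lie in $\L^s(0,T;\WW^{-1,p}(\Omega)) \times \L^s(0,T;\LL^r(\Omega))$ so that they qualify as right-hand sides in~\eqref{syslintrans} and in~\eqref{eqdeftrans}; this follows from the explicit formulas~\eqref{formula_phiW}, assumption~$(\mathbf{A2})$ on $\rho'_\varepsilon$, the uniform boundedness~\eqref{estexistlocal} of $\overline{\mathrm{z}}$, and the embedding $\LL^p(\Omega)\hookrightarrow\LL^r(\Omega)\hookrightarrow \WW^{-1,p}(\Omega)$ used in Lemma~\ref{lemma-mu}. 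Symmetrically, the derivatives of $R$ and $R_T$ at $\overline{\mathrm{z}}$ must lie in the dual spaces listed before~\eqref{eqdeftrans} in order that~\eqref{good-choice} provide admissible data for the adjoint system; this is guaranteed by the continuous Fr\'echet-differentiability assumed on $R$ and~$R_T$.
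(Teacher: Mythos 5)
Your proposal is correct and follows essentially the same route as the paper: both reduce to the functional $\mathbf{W}\mapsto J(\mathbb{S}_\varepsilon(\mathbf{W}),\mathbf{W})$, use the differentiated constraint $e(\mathbb{S}_\varepsilon(\mathbf{W}),\mathbf{W})=0$ to identify the sensitivity, convert the cost derivative into a pairing with the adjoint state via the choice~\eqref{good-choice}, and conclude by KKT. The only (welcome) addition is your explicit check of the constraint qualification for $\|\mathbf{W}\|^2_{\mathcal{W}}\leq C$, which the paper leaves implicit.
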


\begin{proof}
Note that the functional~$\mathscr{L}$ is of class $C^1$. Let us calculate its first-order derivatives. Before, we denote
\begin{eqnarray*}
\mathcal{S} = X^T_{s,p} \times Y^T_{s,r}, & &
\mathcal{V} = \left(\L^s(0,T;\WW^{-1,p}(\Omega))\times \L^s(0,T;\L^r(\Omega))\right)
 \times \left(\BB^{1-2/s}_{s,p}(\Omega) \times \LL^r(\Omega)\right),
\end{eqnarray*}
and by using the derivatives of the mapping $e$ defined in~\eqref{eq-constraint}, we notice that
\begin{eqnarray}
 e(\mathbb{S}_{\varepsilon}(\bW), \bW) = 0 & \Rightarrow &
\left\langle e'_{\mathrm{z}}(\mathbb{S}_{\varepsilon}(\bW), \bW) ; \mathbb{S}_{\varepsilon}'(\bW).\tilde{\bW} \right\rangle_{\mathcal{S}',\mathcal{S}}
+ \left\langle e'_{\bW}(\mathbb{S}_{\varepsilon}(\bW), \bW);\tilde{\bW} \right\rangle_{\mathcal{W}} = 0, \label{useful1} \\
J'_{\mathrm{z}}(\mathrm{z}, \bW) & = & -e'_{\mathrm{z}}(\mathrm{z},\bW)^{\ast}(\mathrm{p},\mathrm{p}(\cdot,T)), \label{useful3}
\end{eqnarray}
where $\mathrm{p}$ satisfies~\eqref{sysadj-approx}-\eqref{good-choice} with $(\mathrm{z} = \mathbb{S}_{\varepsilon}(\bW), \bW)$ as data. Thus we have
\begin{eqnarray*}
\mathscr{L}_{\mathbf{W}}(\bW,\lambda).\tilde{\bW} & = &
\left\langle J'_{\mathrm{z}}(\mathbb{S}_{\varepsilon}(\bW), \bW) ; \mathbb{S}'_{\varepsilon}(\bW).\tilde{\bW} \right\rangle_{\mathcal{S}',\mathcal{S}} + J'_{\bW}(\mathbb{S}_{\varepsilon}(\bW), \bW).\tilde{\bW} - 2\lambda \langle \bW ; \tilde{\bW} \rangle_{\mathcal{W}}\\
& = & 
-2\lambda\langle \bW ; \tilde{\bW} \rangle_{\mathcal{W}}
- \left\langle e'_{\mathrm{z}}(\mathbb{S}_{\varepsilon}(\bW),\bW)^{\ast}(\mathrm{p},\mathrm{p}(\cdot,T)) ; \mathbb{S}_{\varepsilon}'(\bW). \tilde{\bW} \right\rangle_{\mathcal{S}',\mathcal{S}}
\end{eqnarray*}
where we used~\eqref{useful3}. Next, using~\eqref{useful1}, we get
\begin{eqnarray*}
\mathscr{L}_{\mathbf{W}}(\bW,\lambda).\tilde{\bW} & = & 
-2\lambda\langle \bW ; \tilde{\bW} \rangle_{\mathcal{W}}
- \left\langle (\mathrm{p},\mathrm{p}(\cdot,T)) ; e'_{\mathrm{z}}(\mathbb{S}_{\varepsilon}(\bW),\bW).\left(\mathbb{S}_{\varepsilon}'(\bW). \tilde{\bW}\right) \right\rangle_{\mathcal{V}';\mathcal{V}} \\
& = & 
-2\lambda\langle \bW ; \tilde{\bW} \rangle_{\mathcal{W}}
+ \left\langle (\mathrm{p},\mathrm{p}(\cdot,T)) ; e'_{\bW}(\mathbb{S}_{\varepsilon}(\bW),\bW). \tilde{\bW} \right\rangle_{\mathcal{V}';\mathcal{V}} \\
& = & 
-2\lambda\langle \bW ; \tilde{\bW} \rangle_{\mathcal{W}}
+ \left\langle e'_{\bW}(\mathbb{S}_{\varepsilon}(\bW),\bW)^{\ast}(\mathrm{p},\mathrm{p}(\cdot,T)) ; \tilde{\bW} \right\rangle_{\mathcal{W}}.
\end{eqnarray*}
Note that $e'_{\bW}(\mathbb{S}_{\varepsilon}(\bW),\bW) = \big(\nabla_{\bW} \Phi^{(\varepsilon)}(\mathbb{S}_{\varepsilon}(\bW),\bW), 0\big)$ in $\mathscr{L}(\mathcal{W}, \mathcal{V})$. Then the result follows from the Karush-Kuhn-Tucker conditions \textcolor{black}{(see for instance~\cite[section~2.8, p.~63]{Troeltzsch})}.
\end{proof}

\section{Passing to the limit} \label{sec-passing}

The purpose of this section is to derive optimality conditions for problem~\eqref{mainpb}, from those obtained in proposition~\ref{propopteps} for problem~\eqref{mainpbeps}. First we need to verify that solutions of~\eqref{mainpb} can be approached by solutions of~\eqref{mainpbeps}. Next, we need boundedness results on dual variables that appear in~\eqref{optcondeps1}-\eqref{optcondeps2}. The main result is then given in section~\ref{sec-final}.

\subsection{Convergence of minimizers}

We start with proving the convergence of the control-to-state mapping.

\begin{lemma} \label{lemma-cvS}
The sequence $(\mathbb{S}_{\varepsilon})_{\varepsilon}$ converges uniformly towards $\mathbb{S}$ on $\mathcal{W}$, with respect to the norm $X^T_{s,p}\times Y^T_{s,r}$.
\end{lemma}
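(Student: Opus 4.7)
Fix $\mathbf{W}\in \mathcal{W}$ with $\|\mathbf{W}\|^2_{\mathcal{W}}\leq C$, set $(v,w) = \mathbb{S}(\mathbf{W})$ and $(v_\varepsilon,w_\varepsilon) = \mathbb{S}_\varepsilon(\mathbf{W})$. Since Assumption~$(\mathbf{A2})$ gives $C_{\rho_\varepsilon}\leq C_\rho$, Theorem~\ref{th-exist0} ensures that both triples are defined on a common interval $[0,T]$ with $T<T_0$ independent of $\varepsilon$, and that the bound~\eqref{estexistlocal} holds uniformly in $\varepsilon$ and in $\mathbf{W}$ on the constraint set. The differences $\tilde v := v_\varepsilon - v$, $\tilde w := w_\varepsilon - w$ satisfy system~\eqref{syslin} with zero initial data and right-hand side
\begin{equation*}
F_v = \phi_v(v,w) - \phi_v^{(\varepsilon)}(v_\varepsilon,w_\varepsilon),\qquad
F_w = \phi_w(v,w) - \phi_w^{(\varepsilon)}(v_\varepsilon,w_\varepsilon).
\end{equation*}
Proposition~\ref{prop-lin} therefore provides
\begin{equation*}
\|\tilde v\|_{X^T_{s,p}} + \|\tilde w\|_{Y^T_{s,r}}
\leq C \bigl(\|F_v\|_{\L^s(0,T;\WW^{-1,p}(\Omega))} + \|F_w\|_{\L^s(0,T;\LL^r(\Omega))}\bigr).
\end{equation*}

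The plan is to split $F = (F_v,F_w) = \bigl[\Phi^{(\varepsilon)}(v_\varepsilon,w_\varepsilon) - \Phi(v_\varepsilon,w_\varepsilon)\bigr] + \bigl[\Phi(v_\varepsilon,w_\varepsilon)-\Phi(v,w)\bigr]$. The second bracket is handled by Lemma~\ref{lemma-nem} (applied with identical initial data, so the $\|v_1(0)-v_2(0)\|$ contribution vanishes):
\begin{equation*}
\|\Phi(v_\varepsilon,w_\varepsilon)-\Phi(v,w)\|_{\L^s(0,T;\WW^{-1,p}(\Omega))\times \L^s(0,T;\LL^r(\Omega))}
\leq K(\mathbf{W})\bigl(\max(T^{1/2},T^{1/(2s)})\|\tilde v\|_{X^T_{s,p}} + T^{1/s}\|\tilde w\|_{C([0,T];\LL^r(\Omega))}\bigr),
\end{equation*}
with $K(\mathbf{W})\leq C\, C_\rho^{\mathcal{L}_\mathcal{W}}\|\mathbf{W}\|_{\mathcal{W}}^L$ uniformly bounded on the constraint set. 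Since the $Y^T_{s,r}$ norm controls the $C([0,T];\LL^r(\Omega))$ norm, this term has a small prefactor when $T$ is small.

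For the first bracket I will establish the pointwise bound
\begin{equation*}
|\Phi^{(\varepsilon)}(\mathrm z)-\Phi(\mathrm z)|_{\R^2} \leq P(\|\mathbf{W}\|_{\mathcal{W}})\,\|\rho_\varepsilon-\rho\|_{\L^\infty(\R)}\qquad \forall\,\mathrm z\in \R^2,
\end{equation*}
by induction on the layer index using the recursion~\eqref{formulaPhi}: writing $\rho_\varepsilon(\Psi^{(\varepsilon)}_\ell)-\rho(\Psi_\ell) = (\rho_\varepsilon(\Psi^{(\varepsilon)}_\ell)-\rho_\varepsilon(\Psi_\ell)) + (\rho_\varepsilon-\rho)(\Psi_\ell)$, the first term is controlled by $C_\rho |\Psi^{(\varepsilon)}_\ell-\Psi_\ell|$ (using~$(\mathbf{A2})$), and the second by $\|\rho_\varepsilon-\rho\|_\infty$ (using~$(\mathbf{A3})$). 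Composition with $(v_\varepsilon,w_\varepsilon)$ then gives the same $\L^\infty(\Omega\times(0,T))$ bound, and the continuous embedding $\L^\infty(\Omega)\hookrightarrow \LL^r(\Omega)\hookrightarrow \WW^{-1,p}(\Omega)$ (valid by~\eqref{ass-r2}) transfers it to the required spaces. This yields
\begin{equation*}
\|\Phi^{(\varepsilon)}(v_\varepsilon,w_\varepsilon)-\Phi(v_\varepsilon,w_\varepsilon)\|_{\L^s(0,T;\WW^{-1,p}(\Omega))\times \L^s(0,T;\LL^r(\Omega))} \leq C(T,\Omega)\,P(\|\mathbf{W}\|_{\mathcal{W}})\,\|\rho_\varepsilon-\rho\|_{\L^\infty(\R)},
\end{equation*}
which tends to $0$ uniformly in $\mathbf{W}$ on the constraint set, thanks to~$(\mathbf{A3})$.

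Combining the two brackets,
\begin{equation*}
\|\tilde v\|_{X^T_{s,p}} + \|\tilde w\|_{Y^T_{s,r}} \leq C_1\|\rho_\varepsilon-\rho\|_{\L^\infty(\R)} + C_2\,T^\alpha\bigl(\|\tilde v\|_{X^T_{s,p}}+\|\tilde w\|_{Y^T_{s,r}}\bigr),\qquad \alpha = \min(1/2,1/(2s),1/s)>0,
\end{equation*}
with constants $C_1,C_2$ uniform on the constraint set. Choosing $T=T_\ast$ small enough so that $C_2 T_\ast^\alpha \leq 1/2$, we absorb and conclude the uniform convergence on $[0,T_\ast]$. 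For the full interval $[0,T]$, I will iterate on finitely many sub-intervals of length $T_\ast$: on the $k$-th slice $[kT_\ast,(k+1)T_\ast]$ the same argument applies with nonzero initial data $(\tilde v(kT_\ast),\tilde w(kT_\ast))$, bounded by the previous slice's $X\times Y$ norm via the embedding~\eqref{super-embedding} and the $L^\infty(L^r)$ control built into $Y^T_{s,r}$; after finitely many steps the total error remains of order $\|\rho_\varepsilon-\rho\|_\infty$, uniformly in $\mathbf{W}$. The main technical obstacle is precisely this propagation step, where one must track that the multiplicative constants picked up from one slice to the next remain bounded independently of $\mathbf{W}$ on the constraint set, which is ensured by~$(\mathbf{A2})$ and the uniform a~priori bound~\eqref{estexistlocal}.
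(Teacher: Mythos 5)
Your proof follows essentially the same route as the paper's: bound the difference $\mathbb{S}_\varepsilon(\mathbf{W})-\mathbb{S}(\mathbf{W})$ via the linear estimate of Proposition~\ref{prop-lin}, then split the nonlinear forcing into a Lipschitz-in-state part that is absorbed for small $T$ (with the Lipschitz constant kept $\varepsilon$-independent by $(\mathbf{A2})$) and a part that vanishes by the uniform convergence $\rho_\varepsilon\to\rho$ from $(\mathbf{A3})$. You are in fact somewhat more explicit than the paper in two places --- the layer-by-layer induction giving the quantitative bound $|\Phi^{(\varepsilon)}(\mathrm{z})-\Phi(\mathrm{z})|\le P(\|\mathbf{W}\|_{\mathcal{W}})\,\|\rho_\varepsilon-\rho\|_{\L^\infty(\R)}$ where the paper only invokes ``composition'', and the sub-interval iteration covering the case where $T$ is not small enough for direct absorption --- and your restriction of the uniformity to the constraint set $\|\mathbf{W}\|^2_{\mathcal{W}}\le C$ is the correct reading of the statement, since the prefactor grows with $\|\mathbf{W}\|_{\mathcal{W}}$.
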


\begin{proof}
Let be $\mathbf{W} \in \mathcal{W}$, and set $\mathrm{z}_{\varepsilon}=\mathbb{S}_{\varepsilon}(\mathbf{W})$ and  $\mathrm{z}=\mathbb{S}(\mathbf{W})$. From~\eqref{est-lin}, the difference $\mathrm{z}_{\varepsilon} - \mathrm{z}$ satisfies the following estimate
\begin{eqnarray} \label{myesti}
\| \mathrm{z}_{\varepsilon} - \mathrm{z} \|_{X^T_{s,p}\times Y^T_{s,r}}
& \leq & C \| \Phi^{(\varepsilon)}(\mathrm{z}_{\varepsilon},\mathbf{W}) - \Phi(\mathrm{z},\mathbf{W}) \|_{\L^s(0,T;\WW^{-1,p}(\Omega)) \times \L^s(0,T;\LL^{r}(\Omega))}.
\end{eqnarray}
In the right-hand-side of the estimate above, we decompose
\begin{eqnarray*}
\Phi^{(\varepsilon)}(\mathrm{z}_{\varepsilon},\mathbf{W}) - \Phi(\mathrm{z},\mathbf{W}) =
\big(\Phi_{\varepsilon}(\mathrm{z}_{\varepsilon},\mathbf{W}) - \Phi^{(\varepsilon)}(\mathrm{z},\mathbf{W})\big)
+ \big(\Phi^{(\varepsilon)}(\mathrm{z},\mathbf{W}) - \Phi(\mathrm{z},\mathbf{W})\big).
\end{eqnarray*}
First, from Lemma~\ref{lemma-mu} we estimate
\begin{eqnarray*}
\|\Phi^{(\varepsilon)}(\mathrm{z}_{\varepsilon},\mathbf{W}) - \Phi^{(\varepsilon)}(\mathrm{z},\mathbf{W})\|_{\L^s(0,T;\WW^{-1,p}(\Omega)) \times \L^s(0,T;\LL^{r}(\Omega))} \leq CC_{\Phi^{(\varepsilon)}}T^{\alpha}
\| \mathrm{z} - \mathrm{z}_{\varepsilon} \|_{X^T_{s,p}\times Y^T_{s,r}},
\end{eqnarray*}
where $\alpha >0$ is such that $T^{\alpha} \geq \max\left(T^{1/2}, T^{1/(2s)}\right)\max\left(1,T^{1/s}\right)$, and where the Lipschitz constant $C_{\Phi^{(\varepsilon)}}$ of $\Phi^{(\varepsilon)}$ is controlled by $C_{\rho}^{\mathcal{L}_{\mathcal{W}}}$ (see~\eqref{resLip}), due to Assumption~$(\mathbf{A2})$. 
Therefore, we can thus reduce~\eqref{myesti} to
\begin{eqnarray*}
\| \mathrm{z}_{\varepsilon} - \mathrm{z} \|_{X^T_{s,p}\times Y^T_{s,r}}
& \leq & C \| \Phi^{(\varepsilon)}(\mathrm{z},\mathbf{W}) - \Phi(\mathrm{z},\mathbf{W}) \|_{\L^s(0,T;\WW^{-1,p}(\Omega)) \times \L^s(0,T;\LL^{r}(\Omega))}.
\end{eqnarray*}
Recall from Assumption~$(\mathbf{A2})$ that $\rho_{\varepsilon}$ converges uniformly in $C(\R;\R)$ towards $\rho$, so by composition we have $ \Phi^{(\varepsilon)}(\mathrm{z},\cdot) \rightarrow \Phi(\mathrm{z},\cdot)$ uniformly on $\mathcal{W}$ with respect to $C(\mathcal{W};\R^2)$, and {\it a fortiori} for the norm of $\R^2$ replaced by the one of the right-hand-side of the estimate above, which concludes the proof.
\end{proof}

\textcolor{black}{The convergence of minimizers relies on the notion of $\Gamma$-convergence (see~\cite{DalMaso}), and more specifically on the fundamental theorem of $\Gamma$-convergence, stating that for a sequence of functionals that $\Gamma$-converges, the minimizers of the functionals of this sequence do converge towards minimizers of the limit functional.}
\begin{proposition}
Minimizers of problem~\eqref{mainpbeps} converge towards minimizers of problem~\eqref{mainpb} when $\varepsilon$ goes to $0$, up to extraction of a subsequence.
\end{proposition}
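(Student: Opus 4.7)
The plan is to exploit the finite dimensionality of $\mathcal{W}$ together with the uniform convergence $\mathbb{S}_{\varepsilon}\to \mathbb{S}$ provided by Lemma~\ref{lemma-cvS}. Let $(\overline{\mathbf{W}}_{\varepsilon})_{\varepsilon}$ be a family of minimizers of~\eqref{mainpbeps}, and set $\overline{\mathrm{z}}_{\varepsilon} := \mathbb{S}_{\varepsilon}(\overline{\mathbf{W}}_{\varepsilon})$. Because of the constraint $\|\overline{\mathbf{W}}_{\varepsilon}\|^2_{\mathcal{W}} \leq C$ and the fact that $\dim\mathcal{W}<\infty$, up to extraction I may assume that $\overline{\mathbf{W}}_{\varepsilon}\to \overline{\mathbf{W}}^{\ast}$ strongly in $\mathcal{W}$ for some limit satisfying $\|\overline{\mathbf{W}}^{\ast}\|^{2}_{\mathcal{W}}\leq C$, so that $\overline{\mathbf{W}}^{\ast}$ is feasible for~\eqref{mainpb}.

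For the convergence of the states I would set $\overline{\mathrm{z}}^{\ast}:=\mathbb{S}(\overline{\mathbf{W}}^{\ast})$ and use the splitting
$$
\overline{\mathrm{z}}_{\varepsilon} - \overline{\mathrm{z}}^{\ast}
= \bigl(\mathbb{S}_{\varepsilon}(\overline{\mathbf{W}}_{\varepsilon}) - \mathbb{S}(\overline{\mathbf{W}}_{\varepsilon})\bigr)
+ \bigl(\mathbb{S}(\overline{\mathbf{W}}_{\varepsilon}) - \mathbb{S}(\overline{\mathbf{W}}^{\ast})\bigr).
$$
The first bracket tends to $0$ in $X_{s,p}^{T}\times Y_{s,r}^{T}$ by the uniform convergence furnished by Lemma~\ref{lemma-cvS}. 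For the second bracket, I would first establish continuity of $\mathbb{S}$ on $\mathcal{W}$ by the same decomposition argument as in the proof of Lemma~\ref{lemma-cvS}: estimating $\|\mathbb{S}(\mathbf{W}_1)-\mathbb{S}(\mathbf{W}_2)\|_{X_{s,p}^T\times Y_{s,r}^T}$ through~\eqref{est-lin}, absorbing the difference $\Phi(\mathrm{z}_1,\mathbf{W}_1)-\Phi(\mathrm{z}_2,\mathbf{W}_1)$ into the left-hand side via Lemma~\ref{lemma-nem} for sufficiently small~$T$, and then controlling $\Phi(\mathrm{z}_2,\mathbf{W}_1)-\Phi(\mathrm{z}_2,\mathbf{W}_2)$ using the explicit polynomial dependence of $\Phi$ on the weights. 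This yields $\overline{\mathrm{z}}_{\varepsilon}\to \overline{\mathrm{z}}^{\ast}$ strongly in $X_{s,p}^{T}\times Y_{s,r}^{T}$.

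Finally, I would conclude with the optimality of $\overline{\mathbf{W}}^{\ast}$ for~\eqref{mainpb}. Since~\eqref{mainpb} and~\eqref{mainpbeps} share the same admissible set for the controls, for any $\mathbf{W}\in\mathcal{W}$ with $\|\mathbf{W}\|^{2}_{\mathcal{W}}\leq C$ one has
$$
J\bigl(\mathbb{S}_{\varepsilon}(\mathbf{W}),\mathbf{W}\bigr) \,\geq\, J\bigl(\overline{\mathrm{z}}_{\varepsilon},\overline{\mathbf{W}}_{\varepsilon}\bigr).
$$
Applying Lemma~\ref{lemma-cvS} once more to the left-hand side, and using the continuity of $J$ together with the two convergences established above for the right-hand side, one obtains in the limit $J(\mathbb{S}(\mathbf{W}),\mathbf{W}) \geq J(\overline{\mathrm{z}}^{\ast},\overline{\mathbf{W}}^{\ast})$, so that $\overline{\mathbf{W}}^{\ast}$ is a global minimizer of~\eqref{mainpb}.

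The main delicate point I anticipate is the continuity of $\mathbb{S}$ on $\mathcal{W}$, which is not recorded as a separate statement earlier: unlike $\mathbb{S}_{\varepsilon}$, the non-regularized mapping $\mathbb{S}$ is not known to be of class $C^{1}$ (only Lipschitz), so one cannot invoke the implicit function theorem, and one must instead redo by hand the fixed-point/difference argument underlying both Proposition~\ref{prop-locexist} and Lemma~\ref{lemma-cvS}; the uniform bound $\|\mathbf{W}\|^{2}_{\mathcal{W}}\leq C$ is precisely what makes the smallness-in-$T$ step uniform and ensures this argument applies on the common lifespan $T<T_{0}$.
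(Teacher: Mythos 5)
Your proof is correct, but it takes a more elementary, hands-on route than the paper, which instead packages the argument as a $\Gamma$-convergence statement: the paper defines $\tilde{J}_{\varepsilon}(\mathbf{W}) := J(\mathbb{S}_{\varepsilon}(\mathbf{W}),\mathbf{W})$, observes that Lemma~\ref{lemma-cvS} gives uniform convergence $\tilde{J}_{\varepsilon}\to\tilde{J}$ on $\mathcal{W}$ (hence $\Gamma$-convergence to the lower semi-continuous envelope, which is $\tilde{J}$ itself), and invokes the fundamental theorem of $\Gamma$-convergence to conclude. Your direct argument --- compactness of the feasible set in the finite-dimensional $\mathcal{W}$, the splitting $\overline{\mathrm{z}}_{\varepsilon}-\overline{\mathrm{z}}^{\ast} = (\mathbb{S}_{\varepsilon}-\mathbb{S})(\overline{\mathbf{W}}_{\varepsilon}) + \mathbb{S}(\overline{\mathbf{W}}_{\varepsilon})-\mathbb{S}(\overline{\mathbf{W}}^{\ast})$, and passage to the limit in the minimization inequality --- is essentially the proof of that abstract theorem unrolled in this specific setting; it has the merit of making the required compactness (equi-coercivity) explicit, where the paper leaves it implicit in the constraint $\|\mathbf{W}\|^2_{\mathcal{W}}\leq C$. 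One simplification you could make: the continuity of $\mathbb{S}$, which you flag as the delicate point and propose to establish by redoing the fixed-point difference argument, comes for free as the uniform limit of the continuous maps $\mathbb{S}_{\varepsilon}$ (each of class $C^1$ by the earlier proposition on the control-to-state map); this is exactly how the paper obtains it, and it spares you the hand-crafted estimate on $\mathbb{S}(\mathbf{W}_1)-\mathbb{S}(\mathbf{W}_2)$.
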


\begin{proof}
Consider the sequence of functionals
\begin{linenomath}\begin{equation*}
\tilde{J}_{\varepsilon}: \mathbf{W} \mapsto J(\mathbb{S}_{\varepsilon}(\mathbf{W}), \mathbf{W}).
\end{equation*}\end{linenomath}
\textcolor{black}{Let us prove} that the sequence $(\tilde{J}_{\varepsilon})_{\varepsilon >0}$ $\Gamma$-converges towards $\tilde{J} := J(\mathbb{S}(\cdot),\cdot)$. Since from Lemma~\ref{lemma-cvS} the convergence $\mathbb{S}_{\varepsilon} \rightarrow \mathbb{S}$ is uniform on $\mathcal{W}$, the mapping $\mathbb{S}$ is continuous. Since $J$ is continuous, it follows that $(\tilde{J}_{\varepsilon})_{\varepsilon}$ converges uniformly towards $\tilde{J}$ on $\mathcal{W}$. Therefore $(\tilde{J}_{\varepsilon})_{\varepsilon}$ converges towards the lower semi-continuous version of $\tilde{J} := J(\mathbb{S}(\cdot),\cdot)$, that is actually $\tilde{J}$ itself, since by assumption $J$ itself is lower semi-continuous. Then from the fundamental theorem of $\Gamma$-convergence (see~\cite[Corollary~7.20, page~81]{DalMaso}), minimizers of $\tilde{J}_{\varepsilon}$ converge towards minimizers of $\tilde{J}$, and thus the proof is complete.
\end{proof}

\subsection{Boundedness of dual variables}

Following the idea utilized in~\cite{Meyer1}, system~\eqref{sysadj-approx}-\eqref{good-choice} can be rewritten by introducing a variable $\bmu$ as follows
\begin{eqnarray}
\left\{ \begin{array} {ll}
\displaystyle -\frac{\p \mathrm{p}}{\p t} + \mathrm{A}_\delta \mathrm{p} +
\bmu = \left( \begin{matrix} g_v \\ g_w \end{matrix}\right) &  \text{in } \Omega \times (0,T),\\[10pt]
\bmu =  \nabla_\mathrm{z} \Phi^{(\varepsilon)}(\mathrm{z},\mathbf{W} )^\ast \mathrm{p} & \text{in } \Omega \times (0,T),\\[10pt]
\displaystyle \frac{\p p_v}{\p n} = 0 & \text{on } \p \Omega \times (0,T), \\[10pt]
(p_v,p_w)(\, \cdot \, , T) =  (p_{v,T},p_{w,T}) & \text{in } \Omega,
\end{array} \right. \label{sysadj-approx2}
\end{eqnarray}
where $\mathrm{A}_\delta = \displaystyle \left( \begin{matrix} -\nu\Delta & 0 \\ 0 & \delta \, \Id \end{matrix} \right)$, and
\begin{eqnarray}
g_v = -\frac{\p R}{\p \mathrm{z}_1}(\mathrm{z}), \quad
g_w = -\frac{\p R}{\p \mathrm{z}_2}(\mathrm{z}), \quad
p_{v,T} = -\frac{\p R_T}{\p \mathrm{z}_1}(\mathrm{z}), \quad
p_{w,T} = -\frac{\p R_T}{\p \mathrm{z}_2}(\mathrm{z}). \label{4rhs_adj}
\end{eqnarray}
Recall that $R,\, R_T: X^T_{s,p} \times Y_{s,r}^T \rightarrow \R$ are assumed to be~$C^1$. From now we consider the triplet $(\overline{\bW}_{\varepsilon}, \overline{\bmu}_{\varepsilon}, \overline{\lambda}_{\varepsilon})$ for denoting a solution to problem~\eqref{mainpbeps}. Note that from the optimality conditions of Proposition~\ref{propopteps}, when the optimal $\overline{\bW}_{\varepsilon}$ is given, the value of the corresponding optimal $\overline{\bmu}_{\varepsilon}$ is entirely determined as
\begin{linenomath}\begin{equation*}
\overline{\bmu}_{\varepsilon} = \nabla_\mathrm{z} \Phi^{(\varepsilon)}(\overline{\mathrm{z}}_{\varepsilon},\overline{\bW}_{\varepsilon})^{\ast}\overline{\mathrm{p}}_{\varepsilon},
\end{equation*}\end{linenomath}
where $\overline{\mathrm{z}}_{\varepsilon}$ is solution of~\eqref{sysmain}-\eqref{sysmain2} with $\overline{\mathbf{W}}_{\varepsilon}$ as data, and $\overline{\mathrm{p}}_{\varepsilon}$ solution of~\eqref{sysadj-approx}-\eqref{good-choice}, with $\overline{\mathrm{z}}_{\varepsilon} = (\overline{v}_{\varepsilon},\overline{w}_{\varepsilon})$ and $\overline{\mathbf{W}}_{\varepsilon}$ as data. Let us give $\varepsilon$-independent bounds for these quantities.

\begin{proposition} \label{prop-bound}
Let $(\overline{\mathbf{W}}_\varepsilon, \overline{\bmu}_\varepsilon, \overline{\lambda}_\varepsilon)$ be optimal for problem~\eqref{mainpbeps}. Denote by~$\overline{\mathrm{z}}_{\varepsilon}$ the solution of~\eqref{sysmain}-\eqref{sysmain2} with $\rho_{\varepsilon}$ as activation function and $\overline{\bW}_{\varepsilon}$ as weights. Denote by $\overline{\mathrm{p}}_{\varepsilon}$ the solution of~\eqref{sysadj-approx2} with $(\overline{\mathrm{z}}_{\varepsilon}, \overline{\bW}_{\varepsilon})$ as data, and $\overline{\bmu}_{\varepsilon} = \nabla_{\mathrm{z}} \Phi^{(\varepsilon)}(\overline{\mathrm{z}}_{\varepsilon}, \overline{\bW}_{\varepsilon})^{\ast}\overline{\mathrm{p}}_{\varepsilon}$. Then there exists a constant $C>0$ independent of $\varepsilon$ such that
\begin{eqnarray}
\| \overline{\mathrm{z}}_{\varepsilon} \|_{X^T_{s,p} \times Y^T_{s,r}} & \leq & C,
\label{estimate_z}\\	
\| \overline{\mathrm{p}}_\varepsilon \|_{\left(\L^{s'}(0,T;\WW^{1,p'}(\Omega))\cap \W^{1,s'}(0,T;\WW^{-1,p'}(\Omega))\right) \times \W^{1,s'}(0,T;\LL^{r'}(\Omega))} & \leq & C,
\label{estimate_p} \\
\| \overline{\bmu}_\varepsilon \|_{\L^{s'}(0,T;\WW^{-1,p'}(\Omega))\times \L^{s'}(0,T;\LL^{r'}(\Omega))} & \leq & C.
\label{estimate_mu}
\end{eqnarray}
The constant $C>0$ depends only on~$(v_0,w_0)$ and~$(f_v,f_w)$.

\end{proposition}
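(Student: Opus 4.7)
My plan is to establish the three estimates in order, with the $\W^{1,s'}$ part being the main work. The bound~\eqref{estimate_z} is immediate from Theorem~\ref{th-exist0} applied with the regularized activation $\rho_\varepsilon$ and weights $\overline{\mathbf{W}}_\varepsilon$: Assumption~$(\mathbf{A2})$ ensures $C_{\rho_\varepsilon}\leq \|\rho'\|_{\L^\infty(\R)} =: C_\rho$, the constraint $\|\overline{\mathbf{W}}_\varepsilon\|_{\mathcal{W}}^2 \leq C$ gives an $\varepsilon$-independent bound of the weights, and Remark~\ref{rk-important} allows the choice of $T\leq\tilde{T}_0$ uniformly in $\varepsilon$. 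Estimate~\eqref{estexistlocal} then depends only on $(v_0,w_0),(f_v,f_w)$, the Lipschitz constant $C_\rho$ and the structural integers $L,\mathcal{L}_{\mathcal{W}}$.

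For the spatial part of~\eqref{estimate_p}, I would then invoke~\eqref{estimate-adjoint}. Because $R$ and $R_T$ are $\mathcal{C}^1$ and $\overline{\mathrm{z}}_\varepsilon$ is already uniformly bounded by the previous step, the data $(g_v,g_w,p_{v,T},p_{w,T})$ given by~\eqref{good-choice} are uniformly bounded in their respective dual spaces. Moreover, the constant $C(\overline{v}_\varepsilon,\overline{w}_\varepsilon,\overline{\mathbf{W}}_\varepsilon)$ appearing in~\eqref{estimate-adjoint} is monotone in $\|\overline{\mathbf{W}}_\varepsilon\|_{\mathcal{W}}$ and independent of $\varepsilon$, hence itself uniformly bounded. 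This yields the uniform control of $\overline{\mathrm{p}}_\varepsilon$ in $\L^{s'}(0,T;\WW^{1,p'}(\Omega))\times \L^{s'}(0,T;\LL^{r'}(\Omega))$. The estimate~\eqref{estimate_mu} then follows by combining this spatial bound with the pointwise estimate~\eqref{est-gradphi} on $\nabla_{\mathrm{z}}\Phi^{(\varepsilon)}$, which is uniform in $\varepsilon$ thanks to Assumption~$(\mathbf{A2})$, together with the continuous embeddings $\LL^{p'}(\Omega)\hookrightarrow \WW^{-1,p'}(\Omega)$ and $\LL^{p'}(\Omega)\hookrightarrow \LL^{r'}(\Omega)$ (the latter because $r\leq p$).

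The main obstacle is the $\W^{1,s'}$ part of~\eqref{estimate_p}, since Definition~\ref{defsoladj} delivers $\overline{\mathrm{p}}_\varepsilon$ only as a transposition solution. For the ODE component, the second line of~\eqref{sysadj-approx2} directly gives
\begin{equation*}
\partial_t \overline{p}_{w,\varepsilon} = \delta \overline{p}_{w,\varepsilon} + (\overline{\bmu}_\varepsilon)_2 - g_w,
\end{equation*}
whose right-hand side is uniformly controlled in $\L^{s'}(0,T;\LL^{r'}(\Omega))$ by the preceding paragraph, giving the desired $\W^{1,s'}(0,T;\LL^{r'}(\Omega))$ bound. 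For the PDE component, I would apply maximal parabolic regularity to the backward heat equation
\begin{equation*}
-\partial_t \overline{p}_{v,\varepsilon} - \nu\Delta \overline{p}_{v,\varepsilon} = g_v - (\overline{\bmu}_\varepsilon)_1, \qquad \overline{p}_{v,\varepsilon}(\cdot,T)=p_{v,T}.
\end{equation*}
The time reversal $\tilde{p}(t):=\overline{p}_{v,\varepsilon}(T-t)$ converts this into a forward Cauchy problem, to which the maximal $\L^{s'}$-regularity of $\mathcal{A}_{p'}$ on $\WW^{-1,p'}(\Omega)$ applies; this property transfers from~\eqref{ass-max} by self-adjointness of $-\Delta$ and duality, and the terminal datum carries the right trace regularity since $\left(\BB^{1-2/s}_{p,s}(\Omega)\right)'=\BB^{1-2/s'}_{p',s'}(\Omega)$. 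The $\varepsilon$-independent bound of the right-hand side in $\L^{s'}(0,T;\WW^{-1,p'}(\Omega))$ then produces the uniform control of $\overline{p}_{v,\varepsilon}$ in $\L^{s'}(0,T;\WW^{1,p'}(\Omega))\cap \W^{1,s'}(0,T;\WW^{-1,p'}(\Omega))$, and uniqueness of the transposition solution identifies this strong solution with $\overline{p}_{v,\varepsilon}$, completing the proof.
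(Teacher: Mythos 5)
Your argument follows the paper's own proof for all three bounds: \eqref{estimate_z} from \eqref{estexistlocal} via the constraint $\|\overline{\bW}_\varepsilon\|_{\mathcal{W}}^2\leq C$ and Assumption $(\mathbf{A2})$, \eqref{estimate_p} from \eqref{estimate-adjoint} with the data \eqref{good-choice} controlled through the $C^1$-regularity of $R,R_T$ and \eqref{estimate_z}, and \eqref{estimate_mu} from \eqref{est-gradphi}. Where you go beyond the paper is the $\W^{1,s'}$ component of \eqref{estimate_p}: the paper's one-line proof cites only \eqref{estimate-adjoint}, which controls just the spatial norms $\L^{s'}(0,T;\WW^{1,p'}(\Omega))\times\L^{s'}(0,T;\LL^{r'}(\Omega))$, so the time-derivative bound asserted in \eqref{estimate_p} is left implicit there. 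Your bootstrap --- reading $\partial_t\overline{p}_{w,\varepsilon}$ off the ODE once $\overline{\bmu}_\varepsilon$ is bounded, and applying maximal parabolic regularity to the time-reversed equation for $\overline{p}_{v,\varepsilon}$ with terminal datum in $\bigl(\BB^{1-2/s}_{p,s}(\Omega)\bigr)'=\BB^{1-2/s'}_{p',s'}(\Omega)$ --- is the right way to supply this, and the ordering (spatial bound on $\overline{\mathrm{p}}_\varepsilon$, then $\overline{\bmu}_\varepsilon$, then the time derivative) is coherent. Two small caveats: the transfer of maximal regularity from $\mathcal{A}_p$ on $\WW^{-1,p}(\Omega)$ to $\mathcal{A}_{p'}$ on $\WW^{-1,p'}(\Omega)$ by duality deserves a word of justification (it holds here since these are UMD spaces and $R$-sectoriality dualizes, or simply because for the constant-coefficient Neumann Laplacian on a smooth domain maximal regularity holds for all exponents); and your embedding $\LL^{p'}(\Omega)\hookrightarrow\LL^{r'}(\Omega)$ is backwards, since $r\leq p$ gives $r'\geq p'$ and hence $\LL^{r'}(\Omega)\hookrightarrow\LL^{p'}(\Omega)$ on a bounded domain --- the embedding actually needed for \eqref{estimate_mu} is $\WW^{1,p'}(\Omega)\hookrightarrow\LL^{r'}(\Omega)$, which is the dual of $\LL^{r}(\Omega)\hookrightarrow\WW^{-1,p}(\Omega)$ and holds under \eqref{ass-r2}. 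Neither point affects the validity of the overall argument.
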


\begin{proof}
Keep in mind that the constraint $\|\overline{\bW}\|_{\varepsilon} \leq C$ is satisfied, and that from~$(\mathbf{A2})$ the Lipschitz constant of $\rho_{\varepsilon}$ is controlled by the one of $\rho$. Then~\eqref{estimate_z} is obtained from~\eqref{estexistlocal}. Estimate~\eqref{estimate_p} follows from~\eqref{estimate-adjoint} and~\eqref{good-choice}, combined with~\eqref{estimate_z}. Finally,~\eqref{estimate_mu} is due to~\eqref{est-gradphi} combined with~\eqref{estimate_p}.
\end{proof}

\subsection{Optimality conditions for non-smooth activation functions} \label{sec-final}

Like in~\cite{Meyer1}, we derive a wellposedness result for a relaxed version of system~\eqref{sysadj-approx2}.

\begin{proposition} \label{prop10}
There exists an adjoint-state $\mathrm{p}$ and a multiplier $\bmu$ which are solutions of the following system
\begin{eqnarray}
\left\{ \begin{array} {ll}
\displaystyle -\frac{\p \mathrm{p}}{\p t} + \mathrm{A}_{\delta}^{\ast} \mathrm{\mathrm{p}} +
\bmu = \left( \begin{matrix} g_v \\ g_w \end{matrix}\right) &  \text{in } \Omega \times (0,T),\\[10pt]
\bmu \in  \displaystyle
\left(\left(\mathrlap{\prod_{\ell = 1}^{L}}{\hspace*{-0.2pt}\longrightarrow} A_{\ell} \right)^{\ast} \left[(\rho'_{\mathrm{min}})^{L-1}, (\rho'_{\mathrm{max}})^{L-1}\right]\right) \mathrm{p}
& \text{in } \Omega \times (0,T),\\[10pt]
\displaystyle \frac{\p p_v}{\p n} = 0 & \text{on } \p \Omega \times (0,T), \\[10pt]
(p_v,p_w)(\, \cdot \, , T) =  (p_{v,T},p_{w,T}) & \text{in } \Omega,
\end{array} \right. \label{syssubdiff}
\end{eqnarray}
where the notation used in the second equation above is described in the following proof.
\end{proposition}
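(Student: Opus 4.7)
The plan is to pass to the limit $\varepsilon \to 0$ in the optimality system for $(\mathcal{P}_\varepsilon)$, using the $\varepsilon$-uniform bounds of Proposition~\ref{prop-bound}, the constraint $\|\overline{\bW}_\varepsilon\|_{\mathcal{W}}^2\leq C$, and the two-sided bound $(\mathbf{A2})'$. Because $\mathcal{W}$ is finite-dimensional, $\overline{\bW}_\varepsilon \to \overline{\bW}$ strongly up to a subsequence. Estimates~\eqref{estimate_z}-\eqref{estimate_mu} together with Aubin-Lions and the compact Rellich-Kondrachov embeddings of section~\ref{sec-func} then yield, along a further subsequence,
$\overline{\mathrm{z}}_\varepsilon\rightharpoonup\overline{\mathrm{z}}$ in $X^T_{s,p}\times Y^T_{s,r}$ and strongly in $\L^s(0,T;\LL^r(\Omega))^2$, as well as the weak limits $\overline{\mathrm{p}}_\varepsilon\rightharpoonup\overline{\mathrm{p}}$ and $\overline{\bmu}_\varepsilon\rightharpoonup\overline{\bmu}$ in the spaces of~\eqref{estimate_p}-\eqref{estimate_mu}. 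Along the same extraction, each composition $\Phi_\ell^{(\varepsilon)}(\overline{\mathrm{z}}_\varepsilon,\overline{\bW}_\varepsilon)$ converges strongly, and a.e.\ on $\Omega\times(0,T)$, to $\Phi_\ell(\overline{\mathrm{z}},\overline{\bW})$, by exactly the argument of Lemma~\ref{lemma-cvS}.

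The linear part of~\eqref{sysadj-approx2} passes directly to the limit in the transposition formulation~\eqref{eqdeftrans}: the operator $-\p_t+\mathrm{A}_\delta^\ast$ is fixed and linear, $\overline{\mathrm{p}}_\varepsilon$ converges weakly, and the data $(g_v^\varepsilon,g_w^\varepsilon)$ and $(p_{v,T}^\varepsilon,p_{w,T}^\varepsilon)$ given by~\eqref{4rhs_adj} converge thanks to the $C^1$ regularity of $R,R_T$ combined with the strong convergence of $\overline{\mathrm{z}}_\varepsilon$. This delivers the identity $-\p_t\overline{\mathrm{p}}+\mathrm{A}_\delta^\ast\overline{\mathrm{p}}+\overline{\bmu}=(g_v,g_w)^T$ together with the Neumann boundary and terminal conditions of~\eqref{syssubdiff}.

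The main obstacle is the identification of $\overline{\bmu}$. Using~\eqref{grad_nn}, one writes
\begin{equation*}
\overline{\bmu}_\varepsilon = \left(A_{L,\varepsilon}\mathrlap{\prod_{\ell=1}^{L-1}}{\hspace*{-0.2pt}\longrightarrow} \rho'_\varepsilon\bigl(\Phi_\ell^{(\varepsilon)}(\overline{\mathrm{z}}_\varepsilon,\overline{\bW}_\varepsilon)\bigr)\,A_{\ell,\varepsilon}\right)^{\!\ast}\overline{\mathrm{p}}_\varepsilon.
\end{equation*}
Assumption~$(\mathbf{A2})'$ gives that each diagonal factor $\rho'_\varepsilon(\Phi_\ell^{(\varepsilon)})$ is bounded componentwise in $[\rho'_{\min},\rho'_{\max}]$ in $\L^\infty(\Omega\times(0,T))$, so that the scalar product collected along any path in the matrix product is bounded componentwise in $[(\rho'_{\min})^{L-1},(\rho'_{\max})^{L-1}]$. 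Banach-Alaoglu applied to this consolidated scalar factor produces a weak-$\ast$ limit in $\L^\infty$ still taking values in this interval. Since the matrices $A_{\ell,\varepsilon}$ are constants converging strongly to $A_\ell$, pairing $\overline{\bmu}_\varepsilon$ against an arbitrary test function and letting $\varepsilon\to 0$ is legitimate: strongly convergent constant matrices multiply a single weakly-$\ast$ convergent $\L^\infty$ factor which in turn acts on the weakly convergent $\overline{\mathrm{p}}_\varepsilon$, and the limit identifies $\overline{\bmu}$ as an element of the announced set-valued expression. The delicate point, which is the real technical obstacle, is that weak-$\ast$ limits of products of several $\L^\infty$ factors are not products of weak-$\ast$ limits; here this is bypassed by relying exclusively on the componentwise enclosure of the consolidated scalar product, a property stable under weak-$\ast$ convergence, rather than on any pointwise identification of the individual factors. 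The same passage applied to~\eqref{optcondeps1} — after noting that~\eqref{formula_phiW} involves the same scalar factors $\rho'_\varepsilon(\Phi_k^{(\varepsilon)})$ — together with extraction of a non-negative limit $\overline{\lambda}$ from the bounded multipliers $\overline{\lambda}_\varepsilon$, preserves both the stationarity relation and the complementarity~\eqref{optcondeps2} in the limit.
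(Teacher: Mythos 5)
Your strategy---pass to the limit $\varepsilon\to0$ in the regularized optimality system using the uniform bounds of Proposition~\ref{prop-bound} and the two-sided bound $(\mathbf{A2})'$---is the one the paper intends: the paper's own proof of this proposition consists only of the algebraic computation showing $\nabla_{\mathrm{z}}\Phi_L^{(\varepsilon)}$ lies in the set $\big(A_L\cdots A_1\big)\left[(\rho'_{\mathrm{min}})^{L-1},(\rho'_{\mathrm{max}})^{L-1}\right]$ (your ``componentwise enclosure of the consolidated scalar product'', which you should spell out explicitly as the paper does, via the diagonal structure of $\rho_\varepsilon'$ and the Minkowski sum over paths), followed by a citation of Meyer et al.\ for the existence part; the actual limit passage is deferred to the final theorem of section~\ref{sec-final}. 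So you are filling in detail the paper outsources, but two of your steps do not go through as stated. The strong convergence of $\overline{\mathrm{z}}_\varepsilon$ in $\L^s(0,T;\LL^r(\Omega))^2$ ``by Aubin--Lions'' is only justified for the $v$-component, where $X^T_{s,p}$ carries spatial regularity $\WW^{1,p}$ compactly embedded in $\LL^r$. For the $w$-component, $Y^T_{s,r}=\W^{1,s}(0,T;\LL^r(\Omega))$ provides no spatial smoothing, so bounded sets are not precompact in $\L^s(0,T;\LL^r(\Omega))$; the paper's own device for this (proof of Proposition~\ref{prop-exist-min}) is convergence of the cost values combined with uniform convexity of $\L^s(0,T;\LL^r(\Omega))$, and some substitute (that argument, or the Duhamel representation of $w_\varepsilon$) is missing from your proof. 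The same issue recurs for $p_{w,\varepsilon}$.

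The more serious gap is the identification of $\overline{\bmu}$. Writing schematically $\overline{\bmu}_\varepsilon=\sum_{\mathrm{paths}}c\,\theta_\varepsilon\,\overline{\mathrm{p}}_\varepsilon$ with $\theta_\varepsilon\in\left[(\rho'_{\mathrm{min}})^{L-1},(\rho'_{\mathrm{max}})^{L-1}\right]$ a.e., the weak-$\ast$ stability of the enclosure guarantees that the weak-$\ast$ limit $\theta$ of $\theta_\varepsilon$ remains in the interval---but the quantity whose limit you must identify is the \emph{product} $\theta_\varepsilon\overline{\mathrm{p}}_\varepsilon$, and if $\overline{\mathrm{p}}_\varepsilon$ converges only weakly this product can develop correlated oscillations whose limit is not of the form $\theta\,\overline{\mathrm{p}}$ with $\theta$ in the interval. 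Your stated bypass addresses only the scalar factor, not its pairing with $\overline{\mathrm{p}}_\varepsilon$. What rescues the argument is strong convergence of $\overline{\mathrm{p}}_\varepsilon$ in a Lebesgue space: then $\theta_\varepsilon\overline{\mathrm{p}}_\varepsilon-\theta_\varepsilon\overline{\mathrm{p}}\to0$ strongly while $\theta_\varepsilon\overline{\mathrm{p}}\rightharpoonup\theta\,\overline{\mathrm{p}}$. For $p_{v,\varepsilon}$ this strong convergence does follow from~\eqref{estimate_p} by Aubin--Lions (the bound is in $\L^{s'}(0,T;\WW^{1,p'}(\Omega))\cap\W^{1,s'}(0,T;\WW^{-1,p'}(\Omega))$), but you never invoke it, and for $p_{w,\varepsilon}$ it again requires a separate argument. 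Until these compactness points are supplied, the inclusion in the second line of~\eqref{syssubdiff} is not established.
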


\begin{proof}
Let us explain the second line of~\eqref{syssubdiff}. Recall that from~\eqref{grad_nn} we have with matrix notation
\begin{eqnarray*}
\left(\nabla_{\mathrm{z}}\Phi_L^{(\varepsilon)}\right)_{ij} & = & \left(A_L\mathrlap{\prod_{\ell = 1}^{L-1}}{\hspace*{-0.2pt}\longrightarrow} \rho_{\varepsilon}'\big(\Phi^{(\varepsilon)}_\ell\big) A_{\ell}\right)_{ij}\\
& = &
\sum_{k_1} (A_L)_{ik_1} \left(\sum_{k_{3/2},k_2, \dots , k_{L-3/2},k_{L-1}}
\prod_{\ell = 1}^{L-1}
\rho_{\varepsilon}'(\Phi^{(\varepsilon)}_{\ell})_{k_{\ell}k_{\ell+1/2}} (A_{\ell})_{k_{\ell +1/2}k_{\ell+1}}\right) \quad \text{with $k_L := j$}.
\end{eqnarray*}
Since $\rho(x)_i = \rho(x_i)$, the gradient of $x\mapsto \rho(x)$ is represented by a diagonal matrix, such that $\rho'(x)_{k_ak_b} = \delta_{k_ak_b}\rho'(x)_{k_ak_b} = \rho'(x_{k_a})$ if $k_a = k_b$, $0$ otherwise. Thus we get
\begin{linenomath}\begin{equation*}
\left(\nabla_{\mathrm{z}} \Phi^{(\varepsilon)}_L\right)_{ij}  =
\sum_{k_1, \dots , k_{L-1}} (A_L)_{ik_1} \prod_{\ell=1}^{L-1} \rho'_{\varepsilon}(\Phi^{(\varepsilon)}_{k_{\ell}}) (A_{\ell})_{k_{\ell}k_{\ell+1}} \quad \text{with } k_L =j.
\end{equation*}\end{linenomath}
From assumption~$(\mathbf{A2})'$ we deduce
\begin{linenomath}\begin{equation*}
\left(\nabla_{\mathrm{z}} \Phi^{(\varepsilon)}_L\right)_{ij}  \in
\sum_{k_1, \dots k_{L-1}} (A_L)_{ik_1} \left(\prod_{\ell=1}^{L-1} (A_{\ell})_{k_{\ell}k_{\ell+1}}\right) \left[(\rho'_{\mathrm{min}})^{L-1}, (\rho'_{\mathrm{max}})^{L-1}\right] \quad \text{with } k_0 := i,
\end{equation*}\end{linenomath}
where we define the multiplication $a[r^-,r^+] := [\min(ar^-,ar^+), \max(ar^-,ar^+)]$ for all $a,r^-,r^+ \in R$ with $r^- \leq r^+$, and use the Minkowski addition for sets. Thus we have
\begin{linenomath}\begin{equation*}
\nabla_{\mathrm{z}} \Phi^{(\varepsilon)}_L  \in
 \left(\mathrlap{\prod_{\ell = 1}^{L}}{\hspace*{-0.2pt}\longrightarrow}
A_{\ell}\right)\left[(\rho'_{\mathrm{min}})^{L-1}, (\rho'_{\mathrm{max}})^{L-1}\right],
\label{almost}
\end{equation*}\end{linenomath}
leading to the expression used in the second equation of~\eqref{syssubdiff}. The result is very similar to~\cite[Theorem~6.7]{Meyer1}, so its proof is not repeated.
\end{proof}

Now we are in position to provide optimality conditions for problem~\eqref{mainpb} in the case of non-smooth activation functions.

\begin{theorem}
Let $\overline{\mathbf{W}}$ be an optimal solution of~\eqref{mainpb}. Then there exists $\overline{\lambda } \geq 0$ such that for all $1 \leq \ell \leq L-1 $
\begin{linenomath}\begin{equation}
\left.\begin{array} {r}
\displaystyle\left[(\rho'_{\mathrm{min}})^{L-\ell}, (\rho'_{\mathrm{max}})^{L-\ell}\right]\int_0^T \displaystyle\int_\Omega
\left(\left(\mathrlap{\prod_{k = \ell}^{L-1}}{\hspace*{-0.2pt}\longrightarrow} A_{k+1} \right)\rho(\Phi_{\ell-1}(\overline{\mathrm{z}},\overline{\mathbf{W}}))\right)^{\ast}  \overline{\mathrm{p}} \, \d \Omega  - 2\overline{\lambda} \, \overline{W}_{\ell} \ni 0, \\[10pt]
\displaystyle \int_0^T \displaystyle\int_\Omega
\rho(\Phi_{L-1}(\overline{\mathrm{z}},\overline{\mathbf{W}})) \cdot \overline{\mathrm{p}} \, \d \Omega  - 2\overline{\lambda} \, \overline{W}_{L} = 0, \\[10pt]
\overline{\lambda}\left(\|\overline{\mathbf{W}}\|^2_{\mathcal{W}} - C\right) = 0,
\end{array} \right. \label{optcondsub}
\end{equation}\end{linenomath}
where $\overline{\mathrm{p}}$ satisfies~\eqref{syssubdiff} with right-hand-side given as in~\eqref{4rhs_adj}, with $\mathrm{z} = \overline{\mathrm{z}}$ solution of~\eqref{sysmain}-\eqref{sysmain2} with $\overline{\mathbf{W}}$ as data.
\end{theorem}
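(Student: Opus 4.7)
The plan is to pass to the limit $\varepsilon \to 0$ in the optimality conditions~\eqref{optcondeps1}-\eqref{optcondeps2} obtained in Proposition~\ref{propopteps} for the regularized problem~\eqref{mainpbeps}. Let $(\overline{\bW}_\varepsilon,\overline{\lambda}_\varepsilon)$ be a solution of the optimality system for~\eqref{mainpbeps}, and let $\overline{\mathrm{z}}_\varepsilon$, $\overline{\mathrm{p}}_\varepsilon$, $\overline{\bmu}_\varepsilon$ be the associated primal state, adjoint state, and multiplier. By the previous proposition, up to subsequence, $\overline{\bW}_\varepsilon \to \overline{\bW}$ strongly in $\mathcal{W}$ (finite-dimensional), where $\overline{\bW}$ is a minimizer of~\eqref{mainpb}, and Lemma~\ref{lemma-cvS} combined with~\eqref{estimate_z} gives $\overline{\mathrm{z}}_\varepsilon \to \overline{\mathrm{z}} = \mathbb{S}(\overline{\bW})$ strongly in $X^T_{s,p}\times Y^T_{s,r}$.

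Next I would use Proposition~\ref{prop-bound} to extract $\varepsilon$-independent bounds on $\overline{\mathrm{p}}_\varepsilon$ and $\overline{\bmu}_\varepsilon$, allowing weak-$\ast$ convergence (up to subsequence) in the spaces appearing in~\eqref{estimate_p}-\eqref{estimate_mu} towards limits $\overline{\mathrm{p}}$ and $\overline{\bmu}$. The scalar multipliers $\overline{\lambda}_\varepsilon$ are bounded: testing~\eqref{optcondeps1} against $\overline{W}_\ell$ and summing over $\ell$ expresses $2\overline{\lambda}_\varepsilon \|\overline{\bW}_\varepsilon\|_{\mathcal{W}}^2$ in terms of quantities already controlled by Proposition~\ref{prop-bound}; if $\|\overline{\bW}_\varepsilon\|_{\mathcal{W}}^2 \to 0$, then $\overline{\lambda}_\varepsilon = 0$ can be chosen, otherwise $\overline{\lambda}_\varepsilon$ is bounded. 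So $\overline{\lambda}_\varepsilon \to \overline{\lambda} \geq 0$ up to subsequence. The complementarity $\overline{\lambda}(\|\overline{\bW}\|_{\mathcal{W}}^2 - C) = 0$ then follows by passing to the limit in~\eqref{optcondeps2}.

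The relaxed adjoint system~\eqref{syssubdiff} for the limit $(\overline{\mathrm{p}}, \overline{\bmu})$ is obtained exactly as in Proposition~\ref{prop10}: the linear part $-\partial_t \mathrm{p} + \mathrm{A}_\delta^\ast \mathrm{p} + \bmu = (g_v,g_w)^\top$ passes to the limit by linearity together with the continuity of $R'$ and $R_T'$ on $X^T_{s,p}\times Y^T_{s,r}$ (which ensures convergence of~\eqref{4rhs_adj}). The nonlinear algebraic relation $\overline{\bmu}_\varepsilon = \nabla_{\mathrm{z}}\Phi^{(\varepsilon)}(\overline{\mathrm{z}}_\varepsilon, \overline{\bW}_\varepsilon)^\ast \overline{\mathrm{p}}_\varepsilon$ is handled using formula~\eqref{grad_nn}: under assumption $(\mathbf{A2})'$ one has $\rho'_\varepsilon \in [\rho'_{\mathrm{min}}, \rho'_{\mathrm{max}}]$ pointwise, so the product of $L-1$ such factors lies in $[(\rho'_{\mathrm{min}})^{L-1}, (\rho'_{\mathrm{max}})^{L-1}]$. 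Passing to the weak-$\ast$ limit (in $\L^\infty$) of each of these products yields only a selection of that interval, whence the inclusion in the second line of~\eqref{syssubdiff}. The main obstacle is indeed this step: the factors $\rho'_\varepsilon(\Phi^{(\varepsilon)}_k(\overline{\mathrm{z}}_\varepsilon,\overline{\bW}_\varepsilon))$ need not converge strongly --- they only converge in the sense of Young measures to a selection --- and the strong convergence of $\overline{\mathrm{p}}_\varepsilon$ (not just weak) would be needed to pass to the limit in the product. This is precisely why an inclusion, rather than an equality, appears in the optimality conditions.

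Finally, for the optimality condition itself, I pass to the limit in~\eqref{optcondeps1} using formula~\eqref{formula_phiW}. For $\ell = L$ the expression $\frac{\p \Phi^{(\varepsilon)}_L}{\p W_L}(\overline{\mathrm{z}}_\varepsilon, \overline{\bW}_\varepsilon).\tilde{W} = \tilde{W}\circ \rho_\varepsilon(\Phi^{(\varepsilon)}_{L-1}(\overline{\mathrm{z}}_\varepsilon, \overline{\bW}_\varepsilon))$ involves no factor of $\rho'_\varepsilon$, and by the uniform convergence $(\mathbf{A3})$ of $\rho_\varepsilon$ together with the strong convergence of $\overline{\mathrm{z}}_\varepsilon$ and $\overline{\bW}_\varepsilon$ one obtains the equality in the second line of~\eqref{optcondsub}. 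For $1 \leq \ell \leq L-1$, the derivative contains precisely $L-\ell$ factors $\rho'_\varepsilon(\Phi^{(\varepsilon)}_k)$ with $\ell \leq k \leq L-1$, each bounded in $[\rho'_{\mathrm{min}}, \rho'_{\mathrm{max}}]$ by $(\mathbf{A2})'$; passing to the weak-$\ast$ limit in the integral, the same Young-measure/selection argument as above gives an element of the interval $[(\rho'_{\mathrm{min}})^{L-\ell}, (\rho'_{\mathrm{max}})^{L-\ell}]$ multiplying the remaining factor $(\prod^{\longrightarrow}_{k=\ell}^{L-1} A_{k+1})\rho(\Phi_{\ell-1}(\overline{\mathrm{z}},\overline{\bW}))$, whose strong convergence follows from $(\mathbf{A3})$ and the strong convergence of the states and weights. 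Combined with the strong convergence of $2\overline{\lambda}_\varepsilon\overline{W}_{\ell,\varepsilon} \to 2\overline{\lambda}\,\overline{W}_\ell$, this yields the inclusion in the first line of~\eqref{optcondsub}, completing the proof.
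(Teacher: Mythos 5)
Your proposal follows essentially the same route as the paper: pass to the limit in the regularized optimality conditions of Proposition~\ref{propopteps}, using the $\varepsilon$-uniform bounds of Proposition~\ref{prop-bound} for weak convergence of $\overline{\mathrm{p}}_\varepsilon$ and $\overline{\bmu}_\varepsilon$, the interval inclusions for $\nabla_{\mathrm{z}}\Phi^{(\varepsilon)}$ and $\p\Phi^{(\varepsilon)}/\p W_\ell$ derived from $(\mathbf{A2})'$ via \eqref{grad_nn} and \eqref{formula_phiW}, and strong convergence of states and weights for the remaining factors. Your additional remarks --- the explicit argument for boundedness of $\overline{\lambda}_\varepsilon$ and the identification of the weak-limit-of-products issue as the reason only an inclusion survives --- are details the paper leaves implicit (the latter is only discussed in the remark following the theorem), and they are consistent with the paper's treatment.
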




\begin{proof}
Consider the sequence -- indexed by $\varepsilon$ -- of optimal solutions to problem~\ref{mainpbeps} provided by Proposition~\ref{propopteps}. Note that the optimality conditions given by~\eqref{syssubdiff}-\eqref{optcondsub} are satisfied by $(\overline{\bW}_{\varepsilon}, \overline{\bmu}_{\varepsilon}, \overline{\lambda}_{\varepsilon})$. We have in particular
\begin{linenomath}\begin{equation*}
\overline{\mu}_{\varepsilon} \in \left\{ \nabla_{\mathrm{z}}\Phi^{(\varepsilon)}(\overline{\mathrm{z}}_{\varepsilon}, \overline{\mathrm{W}}_{\varepsilon})^{\ast}\overline{\mathrm{p}}_{\varepsilon} \right\} \subset
\displaystyle
\left(\left(\mathrlap{\prod_{\ell = 1}^{L}}{\hspace*{-0.2pt}\longrightarrow} A_{\ell} \right)^{\ast} \left[(\rho'_{\mathrm{min}})^{L-1}, (\rho'_{\mathrm{max}})^{L-1}\right]\right) \overline{\mathrm{p}}_{\varepsilon},
\end{equation*}\end{linenomath}
and on the order side from~\eqref{formula_phiW} we deduce, like in proof of Proposition~\ref{prop10},
\begin{eqnarray*}
\frac{\p \Phi^{(\varepsilon)}}{\p W_\ell}(\overline{\mathrm{z}}_{\varepsilon},\overline{\mathbf{W}}_{\varepsilon})
& \in &
\left(\mathrlap{\prod_{k = \ell}^{L-1}}{\hspace*{-0.2pt}\longrightarrow} A_{k+1}
[ \rho'_{\mathrm{min}}, \rho'_{\mathrm{max}}]
\right)\rho(\Phi_{\ell-1}(\overline{\mathrm{z}}_{\varepsilon},\overline{\mathbf{W}}_{\varepsilon}))\\
& \in & 
[ (\rho'_{\mathrm{min}})^{L-\ell}, (\rho'_{\mathrm{max}})^{L-\ell}] \left(\mathrlap{\prod_{k = \ell}^{L-1}}{\hspace*{-0.2pt}\longrightarrow} A_{k+1}
\right)\rho(\Phi_{\ell-1}(\overline{\mathrm{z}}_{\varepsilon},\overline{\mathbf{W}}_{\varepsilon})).
\end{eqnarray*}
Besides, from Proposition~\ref{prop-bound} the different variables indexed by~$\varepsilon$ are bounded, and so up to extraction we can assume that they converge weakly. Passing to the limit in the linear system~\eqref{sysadj-approx2} -- with $(\mathrm{z},\mathrm{p}, \bmu)$ replaced by $(\mathrm{z}_{\varepsilon},\mathrm{p}_{\varepsilon}, \bmu_{\varepsilon})$ -- is straightforward. Passing to the limit in the nonlinear terms of the state equation can be achieved like in the proof of Proposition~\ref{prop-exist-min}, namely via strong convergence.
\end{proof}

\begin{remark}
When passing to the limit in the second equation of~\eqref{sysadj-approx2}, we are not able to get
\begin{linenomath}\begin{equation*}
\bmu \in [\nabla_{\mathrm{z}} \Phi]^{\ast} \mathrm{p} \quad \text{in } \Omega \times (0,T).
\end{equation*}\end{linenomath}
Indeed, even when the set of points where $\rho$ is not differentiable is only countable, the values of $\rho'_{\varepsilon}$ may possibly be such that those of $\nabla_{\mathrm{z}} \Phi^{(\varepsilon)}$ oscillate \textcolor{black}{outside the bounds} of $\nabla_{\mathrm{z}} \Phi$ when $\varepsilon$ goes to zero, which constitutes a difficulty that goes beyond the scope of the present work. Therefore we consider only inclusions as given in the second equation of~\eqref{syssubdiff} and the first equation of~\eqref{optcondsub}.
\end{remark}

\section{Numerical illustrations} \label{sec-num}

\textcolor{black}{We test the feasibility of our approach by designing neural networks that approximate the FitzHugh-Nagumo model and the Aliev-Panfilov model, with different types of activation functions.}

\subsection{Approximating the FitzHugh-Nagumo model}
\textcolor{black}{First, we try to re-discover the FitzHugh-Nagumo model.}
More precisely, we create data corresponding to the state solution of the FitzHugh-Nagumo model, and design a neural network that minimizes a distance between these data and the outputs of the monodomain model with the neural network as nonlinearity. For simplicity, we consider the ODE version of the monodomain model:
\begin{linenomath}\begin{equation}
\left\{ \begin{array} {ll}
\displaystyle \dot{v} + \phi_v(v,w) = f_v &
\text{in } (0,T),\\[10pt]
\displaystyle \dot{w} + \delta w + \phi_w(v,w) = f_w &
\text{in } (0,T),\\[10pt]
(v,w)(0) = (v_0,w_0)\in \R^2. &
\end{array} \right. \label{ODE}
\end{equation}\end{linenomath}
The solutions $\mathrm{z} = (v,w)$ of~\eqref{ODE} correspond to average values over $\Omega$ of the solutions of system~\eqref{sysmain}. Data are pairs $\left\{(\mathrm{z}^{(0)}_{\mathrm{data},k},\mathrm{z}_{\mathrm{data},k}); 1\leq k \leq K\right\}$, where $\mathrm{z}_{\mathrm{data},k}$ are evaluations of the solution $(v_{\text{\tiny FH}},w_{\text{\tiny FH}})$ of the FitzHugh-Nagumo model with $\mathrm{z}^{(0)}_{\mathrm{data},k}$ as initial condition:
\begin{linenomath}\begin{equation}
\left\{ \begin{array} {ll}
\displaystyle \dot{v}_{\text{\tiny FH}} + av_{\text{\tiny FH}}^3+bv_{\text{\tiny FH}}^2+cv_{\text{\tiny FH}} +dw_{\text{\tiny FH}} = f_v & \text{in }  (0,T),\\[10pt]
\displaystyle \dot{w}_{\text{\tiny FH}} + \eta w_{\text{\tiny FH}} + \gamma v_{\text{\tiny FH}} = f_w & \text{in } (0,T),\\[10pt]
(v_{\text{\tiny FH}},w_{\text{\tiny FH}})(0) = \mathrm{z}^{(0)}_{\mathrm{data},k} \in \R^2. &
 \end{array} \right. \label{ODEFH}
\end{equation}\end{linenomath}
For numerical realizations, we choose
\begin{linenomath}\begin{equation*}
a = 1/3, \quad b = 0, \quad c = -1.0, \quad d = 1.0, \quad \eta = 0.064, \quad \gamma = -0.08,
\quad f_v = 0.5, \quad f_w = 0.056.
\end{equation*}\end{linenomath}
We consider $K=7$ data, corresponding to the following initial conditions:
\begin{linenomath}\begin{equation}
\begin{array} {l}
\mathrm{z}^{(0)}_{\mathrm{data},1} = (0,0), \quad
\mathrm{z}^{(0)}_{\mathrm{data},2} = (1,1), \quad
\mathrm{z}^{(0)}_{\mathrm{data},3} = (-1,-1), \quad
\mathrm{z}^{(0)}_{\mathrm{data},4} = (1,0), \\
\mathrm{z}^{(0)}_{\mathrm{data},5} = (0,1), \quad
\mathrm{z}^{(0)}_{\mathrm{data},6} = (-1,1), \quad
\mathrm{z}^{(0)}_{\mathrm{data},7} = (1,-1).
\end{array} \label{super-data}
\end{equation}\end{linenomath}
System~\eqref{ODE} is discretized with the Crank-Nicholson method, and nonlinearities are treated with the Newton method. The adjoint system is discretized with an implicit Euler scheme. We discretize the optimality conditions corresponding to the following optimal control problem:
\begin{linenomath}\begin{equation} \label{pbODE} \tag{$\mathcal{P}_{\text{\tiny FH}}$}
\left\{ \begin{array} {l}
\displaystyle \min_{\mathbf{W} \in \mathcal{W}} \frac{1}{2} \sum_{k=1}^K \int_0^T |\mathrm{z}_k - \mathrm{z}_{\mathrm{data},k} |_{\R^2}^2 \d t +  \alpha \|\mathbf{W} \|_{\mathcal{W}}^2 , \\[15pt]
\text{subject to $(\mathrm{z}_k = (v,w),\bW)$ satisfying~\eqref{ODE} with $(v_0,w_0) = \mathrm{z}^{(0)}_{\mathrm{data},k}$.}
\end{array} \right.
\end{equation}\end{linenomath}
In~\eqref{ODE} the nonlinearity $\Phi = (\phi_v,\phi_w)$, given in~\eqref{sysmain2}, corresponds to a feedforward residual neural network of $L=7$ layers, parameterized with the weights $\mathbf{W}$, choosing $n_{\ell} = 2$ for all $1 \leq \ell \leq L$. The dimensions of the hidden layers are equal to $2$. The activation functions are smoothed ReLU, like those given in~\eqref{smoothReLU} with $\varepsilon = 2.0$. Choosing $\alpha = 0.01$, problem~\eqref{pbODE} is solved with the Barzilai-Borwein algorithm~\cite{BarBor} initialized with random weights and an Armijo rule. We illustrate the outcomes of the neural network $\Phi$ so computed in Figure~\ref{fig-num}, by testing the initial condition $(v_0,w_0) = (2,0)$, and comparing the corresponding solution with the one from the FitzHugh-Nagumo model.

\hspace{-5pt}\begin{minipage}{0.96\linewidth}
\begin{minipage}{0.96\linewidth}
\begin{figure}[H]
\includegraphics[trim = 0cm 0cm 0cm 0cm, clip, scale=0.32]{./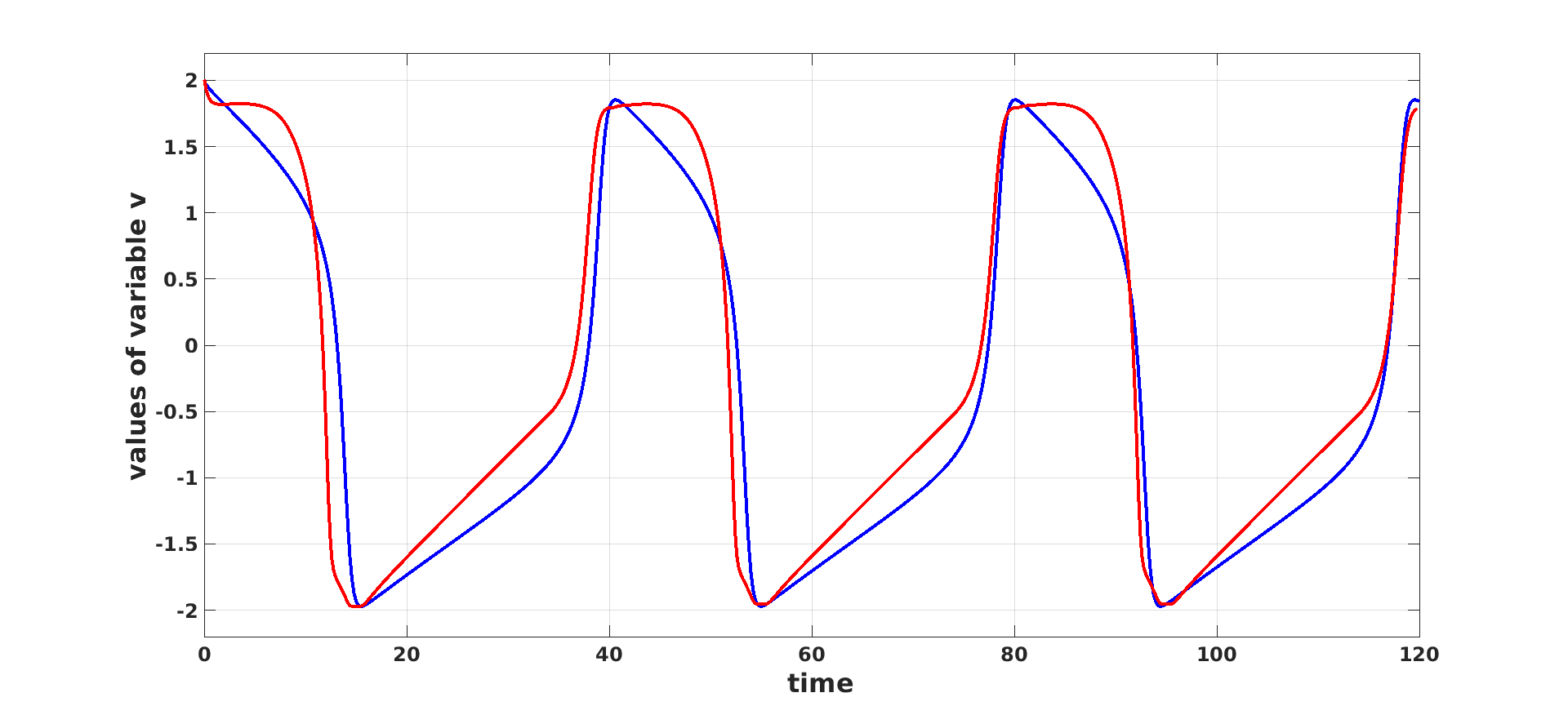}
\end{figure}
\end{minipage}
\vspace*{-20pt}
\begin{figure}[H]
\caption{With $(v_0,w_0) = (v_{\text{\tiny FH}},w_{\text{\tiny FH}})(0) = (2,0)$: The solution $v_{\text{\tiny FH}}$ of the FitzHugh-Nagumo model~\eqref{ODEFH} in blue, and in red the solution $v$ of~\eqref{ODE} with the neural network as $\Phi= (\phi_v,\phi_w)$. (For interpretation of the references to color in this figure legend, the reader is invited to refer to the digital version of this article.)\label{fig-num}}
\end{figure}
\end{minipage}\\
\FloatBarrier

We observe that minima and maxima of $v_{\text{\tiny FH}}$ are approximated very accurately, whereas the overall graph of $v_{\text{\tiny FH}}$ is roughly approximated (the misfit between the two curves is about 23.6\%). Likely this is due to the specific architecture of the neural network. We noted that reducing the value of the regularizing parameter $\varepsilon$ does not significantly change the results. However, starting with a large $\varepsilon$ can facilitate the convergence of gradient algorithms.

\subsection{Approximating the Aliev-Panfilov model}
\textcolor{black}{While the FitzHugh-Nagumo is of polynomial type, the Aliev-Panfilov model~\cite{Aliev1996} is {\it a priori} more complex and presents specific phenomena like {\it spiral} and {\it scroll} waves that have been studied in~\cite{Nash2004, Goektepe2010, Pagani2021}. The ODE version of this model writes as follows:
\begin{linenomath}
\begin{equation}
\left\{ \begin{array} {ll}
\displaystyle \dot{v}_{\text{\tiny AP}} + K_v v_{\text{\tiny AP}}(v_{\text{\tiny AP}}-a)(v_{\text{\tiny AP}}-1) + v_{\text{\tiny AP}}w_{\text{\tiny AP}} = f_v & \text{in }  (0,T),\\[10pt]
\displaystyle \dot{w}_{\text{\tiny AP}} +
\left( \varepsilon_0 + \frac{c_1w_{\text{\tiny AP}}}{c_2+u_{\text{\tiny AP}}} \right)
\left(w_{\text{\tiny AP}} + K_wv_{\text{\tiny AP}}(v_{\text{\tiny AP}}-b-1) \right)
= f_w & \text{in } (0,T),\\[10pt]
(v_{\text{\tiny AP}},w_{\text{\tiny AP}})(0) = \mathrm{z}^{(0)}_{\mathrm{data},k} \in \R^2. &
\end{array} \right. \label{ODEAP}
\end{equation}
\end{linenomath}
In the numerical illustrations we choose}

\textcolor{black}{\begin{linenomath}\begin{equation*}
K_v = 8.0, \quad K_w = 6.5, \quad a =b = 0.15, \quad
c_1 = 0.2, \quad c_2 = 0.3, \quad \varepsilon_0 = 0.002.
\end{equation*}\end{linenomath}
Like in the previous subsection, a neural network is trained by considering the data that correspond to the initial conditions given in~\eqref{super-data}. We use two types of activation functions: The so-called {\it Growing Cosine Unit}~(GCU):}

\textcolor{black}{\begin{linenomath}\begin{equation*}
\rho(x) = x\cos(x),
\end{equation*}\end{linenomath}
and the  {\it Hyperbolic tangent sigmoid}~(Tansig), which is simply the hyperbolic tangent function}

\textcolor{black}{\begin{linenomath}
\begin{equation*}
\rho(x) = \tanh(x).
\end{equation*}\end{linenomath}
The model~\eqref{ODE} so designed is tested when simulating a periodic traveling wave solution of the Aliev-Panfilov model with $(v_{\text{\tiny AP}}(0), w_{\text{\tiny AP}}(0)) = (0.75, 0.75)$ (see~\cite[Fig.~3]{Gani2016}). The comparison between the solutions of the ground truth ODE~\eqref{ODEAP} and the solutions obtained by solving~\eqref{ODE} with the trained neural networks as nonlinearity is presented in Figure~\ref{fig-AP}, for different neural network architectures. The latter are described in Table~\ref{table-arch}.}

\begin{table}[H]
\begin{center}
\begin{eqnarray*}
\begin{array} {|c|c|c|c|}
	\hline
	 \text{Architecture} &  \text{act. function} & \text{nb of layers} & \text{width of hidden layers}\\
	\hline
	 1 &  \text{GCU} & 7 & 2 \\
	\hline
	 2 &  \text{Tanh} & 5 & 8 \\
	\hline
\end{array} &
\end{eqnarray*}
\vspace*{-10pt}
		\caption{\textcolor{black}{Description of the two types of architectures.}}
		\label{table-arch}
\end{center}
\end{table}
\FloatBarrier

\hspace{-5pt}\begin{minipage}{0.96\linewidth}
\begin{tabular} {c|c}
\begin{minipage}{0.47\linewidth}
\begin{figure}[H]
\includegraphics[trim = 0cm 0cm 0cm 1.5cm, clip, scale=0.32]{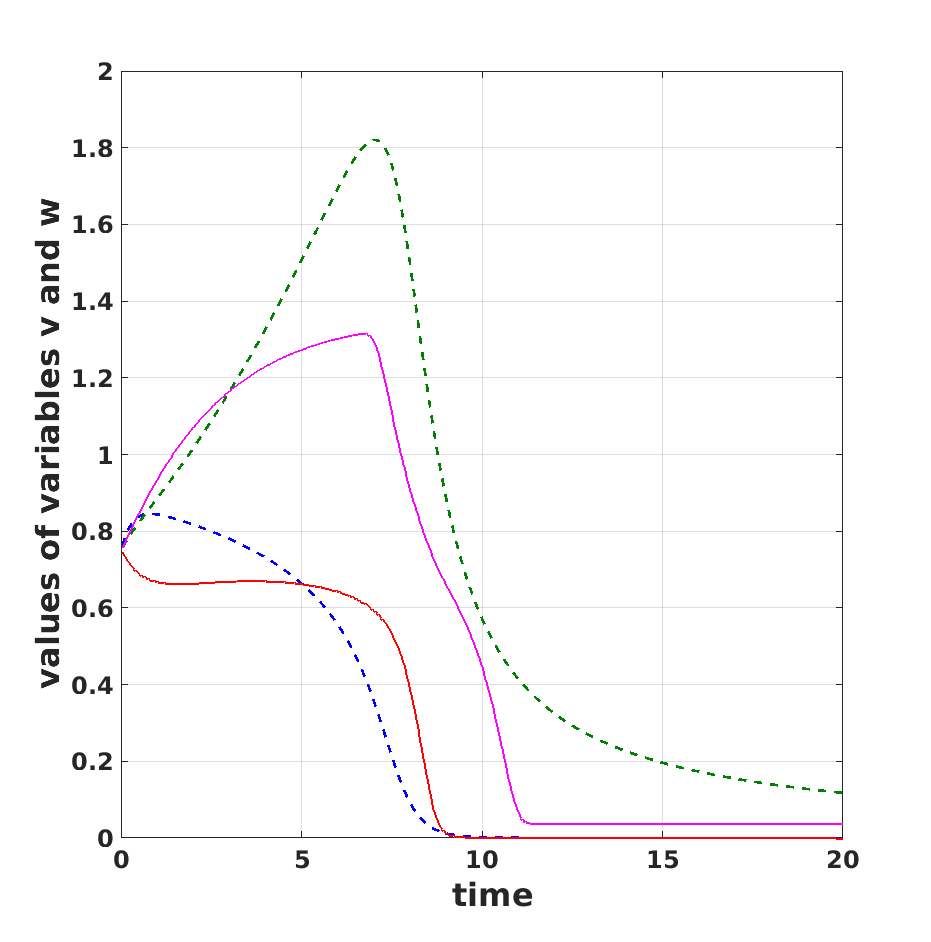}
\end{figure}
\end{minipage}
&
\begin{minipage}{0.47\linewidth}
\begin{figure}[H]
\includegraphics[trim = 0cm 0cm 0cm 1.5cm, clip, scale=0.32]{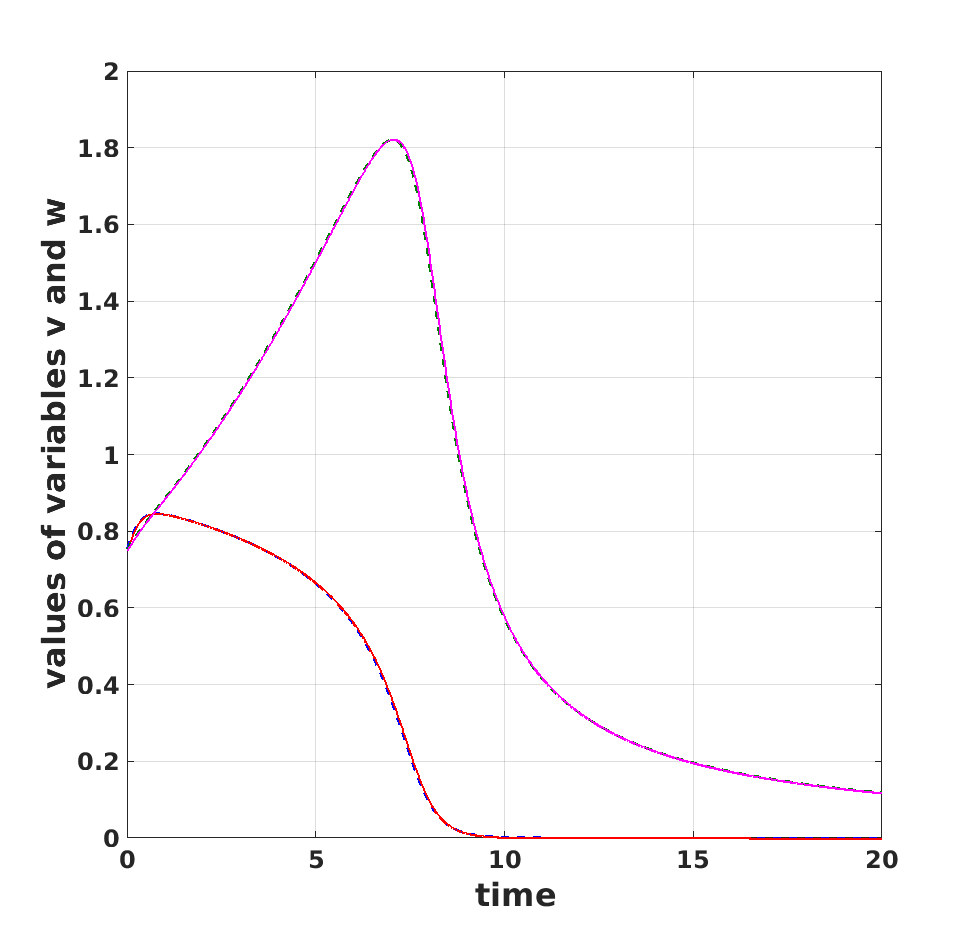}
\end{figure}
\end{minipage}
\end{tabular}
\vspace*{-20pt}
\begin{figure}[H]
\caption{\textcolor{black}{With $(v_0,w_0) = (v_{\text{\tiny AP}},w_{\text{\tiny AP}})(0) = (0.75,0.75)$: The solution $(v_{\text{\tiny AP}},w_{\text{\tiny AP}})$ of the Aliev-Panfilov model~\eqref{ODEAP} in dashed blue and green respectively, in red and magenta respectively, the solution $(v,w)$ of~\eqref{ODE} with the trained neural networks as $\Phi= (\phi_v,\phi_w)$. Left: with Architecture~1. Right: with Architecture~2.}\label{fig-AP}}
\end{figure}
\end{minipage}\\
\FloatBarrier

\textcolor{black}{Figure~\ref{fig-AP} shows that the second architecture enables us to obtain a very accurate neural network (the corresponding misfit is about $0,77 \%$), while the first architecture introduces a non-negligible misfit, even if the qualitative behavior seems to be consistent.  We have also tested the activation function $\tanh$ with Architecture~1, namely~7 layers and~2 neurons per layer, as in GCU, and failed. With $\tanh$ as activation function, it is important to increase the number of hidden layer, and while  the number of layers can be reduced (compared to GCU as activation function), to obtain excellent results.}

\textcolor{black}{Next, these neural networks are re-used for performing in 1D the numerical simulation of the PDE version of this trained model, with $\Omega = (0,10)$ and $T=10$, namely:
\begin{linenomath}\begin{equation}
\left\{ \begin{array} {ll}
\displaystyle \dot{v} -D_v \Delta v + \phi_v(v,w) = f_v &
\text{in } \Omega \times(0,T),\\[10pt]
\displaystyle \dot{w} -D_w \Delta w + \phi_w(v,w) = f_w &
\text{in } \Omega \times (0,T),\\[5pt]
\displaystyle \frac{\p v}{\p n} = 0 & \text{on } \p \Omega \times (0,T), \\[5pt]
(v,w)(0) = (v_0,w_0)\in \R^2. &
\end{array} \right. \label{PDEAP}
\end{equation}\end{linenomath}
For this purpose, we implemented in C++, using the Getfem++ Library~\cite{Getfem}, a solver for PDE~\eqref{PDEAP}. The visualization of the solutions is done with Matlab. The space discretization is made with P1 elements and the time discretization with an implicit Euler scheme. We chose $D_v = 0.0005$, $D_w = 0.005$, and the right-and-sides $f_v$ and $f_w$ were chosen such that the ground-truth solutions of the Aliev-Panfilov model correspond to
\begin{linenomath}\begin{equation}
\begin{array} {l}
\displaystyle  v_{\text{\tiny AP}}(x,t) = \sin(2(x-ct)) + \sin(4(x-ct)), \quad
w_{\text{\tiny AP}}(x,t) = \cos(2(x-ct)) +
\frac{1}{2} \cos(4(x-ct)),\\[5pt]
\displaystyle  f_v = \dot{v}_{\text{\tiny AP}} - D_v\Delta v_{\text{\tiny AP}} +
K_v v_{\text{\tiny AP}}(v_{\text{\tiny AP}}-a)(v_{\text{\tiny AP}}-1) + v_{\text{\tiny AP}}w_{\text{\tiny AP}}, \\[5pt]
\displaystyle f_w = \dot{w}_{\text{\tiny AP}} - D_w\Delta w_{\text{\tiny AP}}
+\left( \varepsilon_0 + \frac{c_1w_{\text{\tiny AP}}}{c_2+u_{\text{\tiny AP}}} \right)
\left(w_{\text{\tiny AP}} + K_wv_{\text{\tiny AP}}(v_{\text{\tiny AP}}-b-1) \right),\\
(v_0,w_0) = \big(v_{\text{\tiny AP}}(\cdot,0), w_{\text{\tiny AP}}(\cdot,0)\big),
\end{array} \label{super-rhs}
\end{equation}\end{linenomath}
with velocity $c = 0.1$. Note that these forced solutions are traveling waves. The solutions obtained by solving~\eqref{PDEAP} with the two different neural networks are represented in Figures~\ref{fig-PDE} and~\ref{fig-PDE-OK}. We obtain traveling waves, in spite of the fact that we did not impose this periodicity in the model, except implicitly in the right-hand-sides~$f_v$ and~$f_w$.}\\
\begin{minipage}{0.96\linewidth}
\begin{tabular} {c|c}
\begin{minipage}{0.45\linewidth}
\begin{figure}[H]
\includegraphics[trim = 0cm 0cm 0cm 1.5cm, clip, scale=0.32]{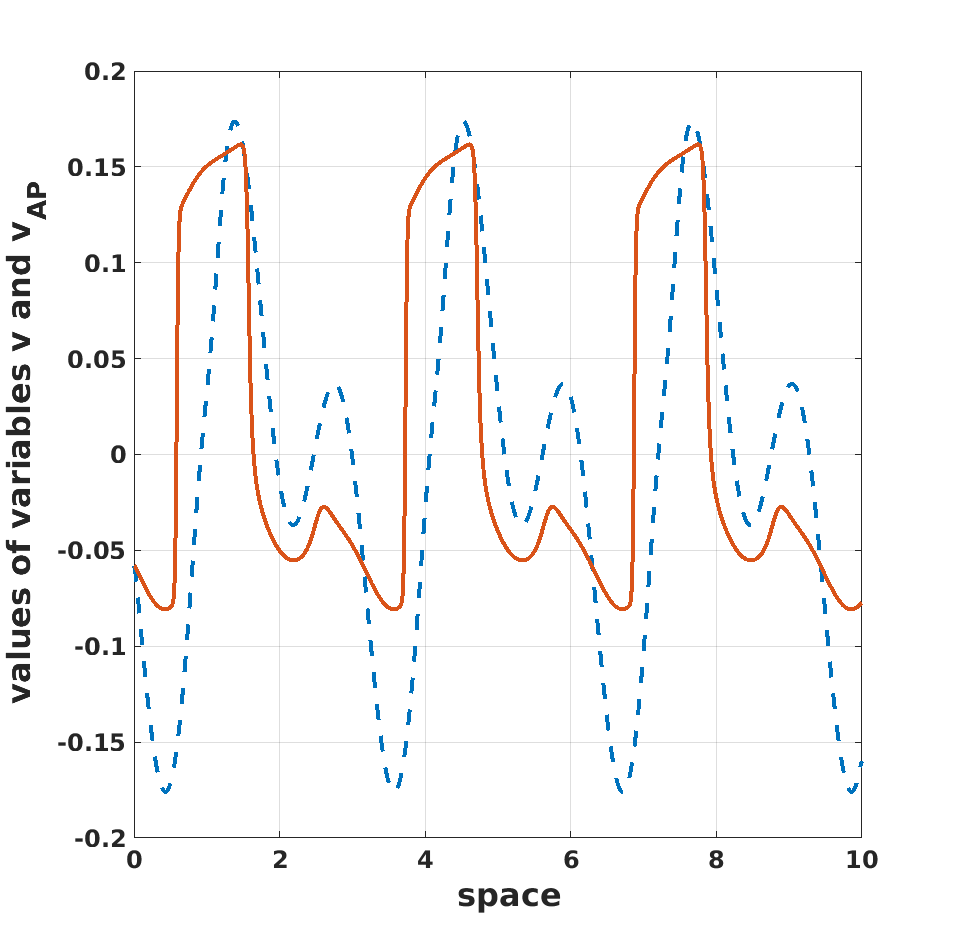}
\end{figure}
\end{minipage}
&
\begin{minipage}{0.45\linewidth}
\begin{figure}[H]
\includegraphics[trim = 0cm 0cm 0cm 1.5cm, clip, scale=0.32]{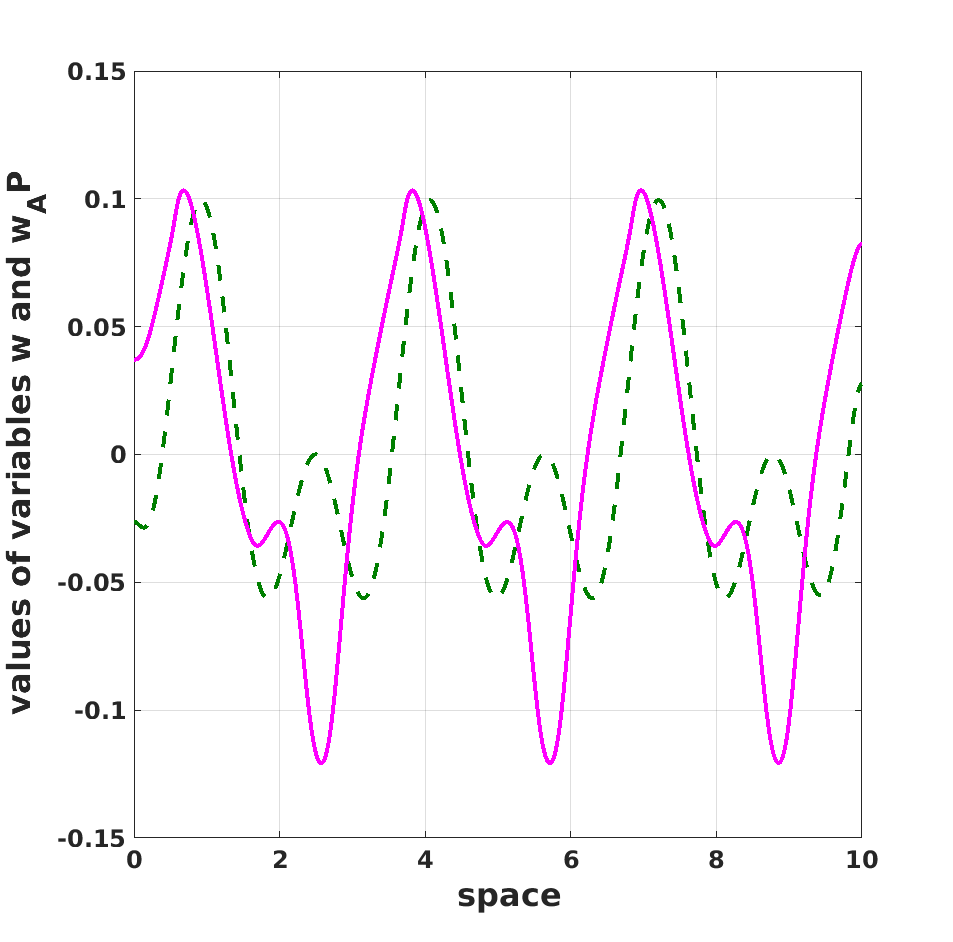}
\end{figure}
\end{minipage}
\end{tabular}
\vspace*{-20pt}
\begin{figure}[H]
\caption{\textcolor{black}{At time $t = 9.00$, the computed solution $(v_{\text{\tiny AP}},w_{\text{\tiny AP}})$ of the Aliev-Panfilov PDE model corresponding to~\eqref{super-rhs} represented in dashed blue and green respectively, compared with the computed solution $(v,w)$ of~\eqref{PDEAP} with the neural network as $\Phi= (\phi_v,\phi_w)$, represented in red and magenta, respectively. Here the neural network was trained with Architecture~1.}\label{fig-PDE}}
\end{figure}
\end{minipage}\\
\begin{minipage}{0.96\linewidth}
\begin{tabular} {c|c}
\begin{minipage}{0.45\linewidth}
\begin{figure}[H]
\includegraphics[trim = 0cm 0cm 0cm 1.5cm, clip, scale=0.32]{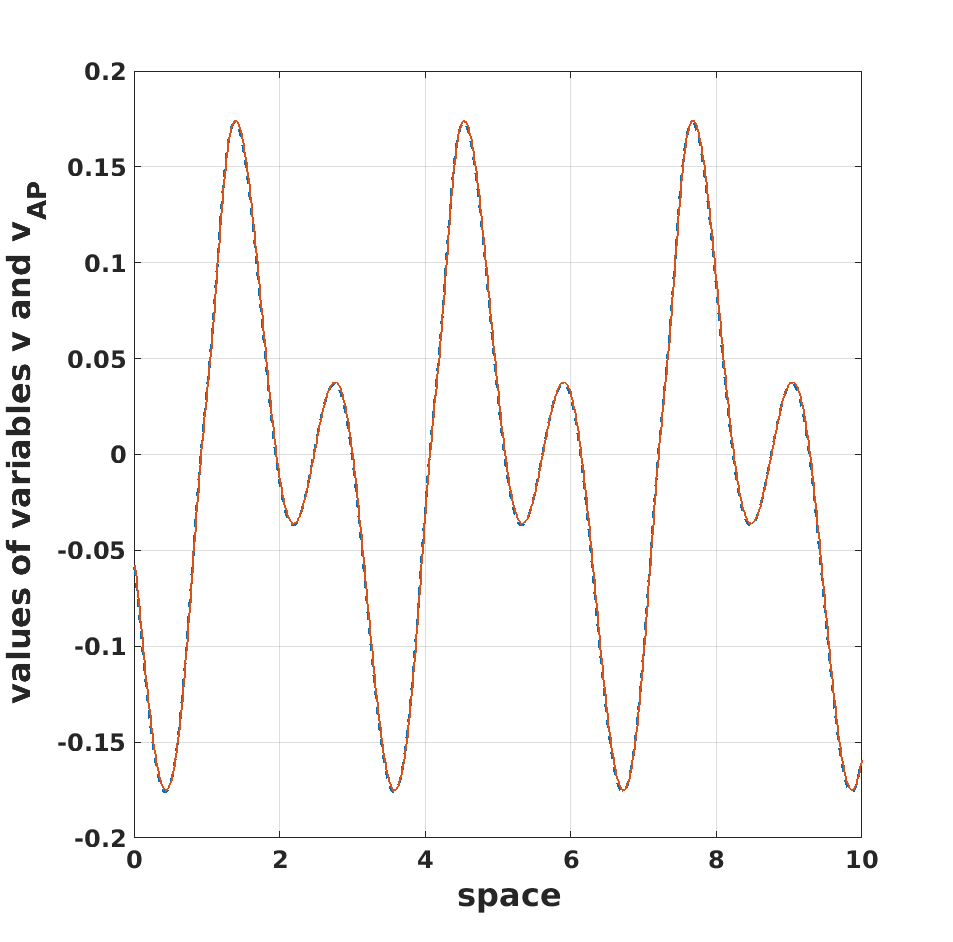}
\end{figure}
\end{minipage}
&
\begin{minipage}{0.45\linewidth}
\begin{figure}[H]
\includegraphics[trim = 0cm 0cm 0cm 1.5cm, clip, scale=0.32]{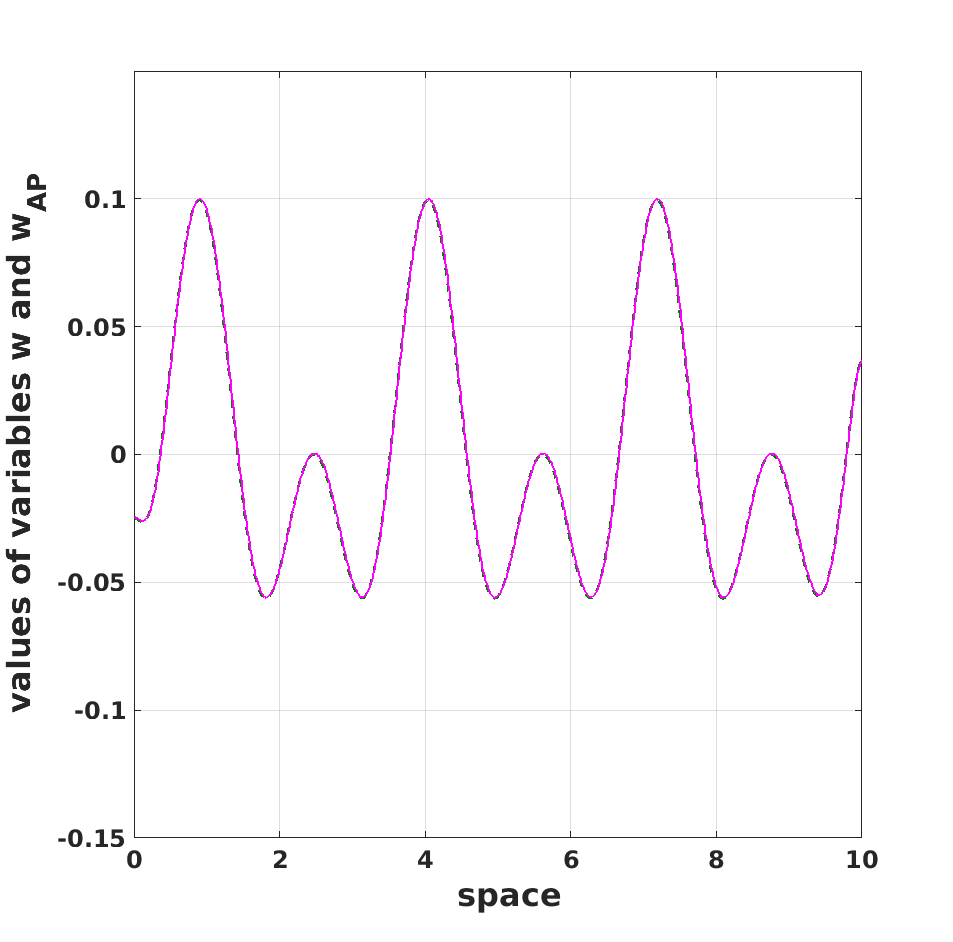}
\end{figure}
\end{minipage}
\end{tabular}
\vspace*{-20pt}
\begin{figure}[H]
\caption{\textcolor{black}{Same as Figure~\ref{fig-PDE}, but with the neural network trained with Architecture~2 as~$\Phi= (\phi_v,\phi_w)$.
}\label{fig-PDE-OK}}
\end{figure}
\end{minipage}\\

\textcolor{black}{In Figures~\ref{fig-PDE} and~\ref{fig-PDE-OK}, the profiles of the traveling waves are depicted at $t=9$. Figure~\ref{fig-PDE} corresponds to the first architecture, and Figure~\ref{fig-PDE-OK} to the second one. As in the case of the ODE version, also for the PDE model~\eqref{PDEAP}, Architecture~2 is superior to Architecture~1 for the reconstruction of the nonlinearity.}

\section{Conclusion} \label{sec-conc}

We addressed a model identification problem via optimal control techniques, with parameters defining neural networks. The choice of artificial neural networks for determining the nonlinearity of the model constitutes a change in the nature of the approximation compared to SINDy for example. The lack of differentiability in cases where non-smooth activation functions are considered requires regularization techniques in order to derive rigorously optimality conditions. \textcolor{black}{A further approach would consist in deriving necessary optimality conditions directly, without regularization. For that purpose sub-differential calculus techniques could be applied, as such methods were recently deployed in an infinite dimensional setting~\cite{Frankowska2018}. In the numerical realizations we observed that the choice of the activation functions as well as the width of the hidden layers can have a significant influence on the quality of the obtained reconstruction.} 

{\footnotesize \printbibliography}

\end{document}